\documentclass[11pt,a4paper]{article}
\usepackage[margin=1in]{geometry}
\usepackage[numbers,sort&compress]{natbib}

\usepackage{amsmath,amssymb,amsfonts,amsthm,mathtools}
\usepackage{microtype}
\usepackage{mathrsfs}
\usepackage{bm}
\usepackage{booktabs}
\usepackage{array}
\usepackage{enumitem}
\usepackage{aliascnt}
\usepackage{xcolor}
\usepackage{hyperref}
\usepackage[nameinlink,capitalise,noabbrev]{cleveref}
\usepackage{bookmark}

\definecolor{linkblue}{RGB}{0,70,140}
\hypersetup{
  colorlinks=true,
  linkcolor=linkblue,
  citecolor=linkblue,
  urlcolor=linkblue,
  pdftitle={Strong well-posedness for cyclic catalytic reaction--diffusion SPDEs with mutation and immigration},
  pdfauthor={Jie Xiong and Wen Xu},
  pdfkeywords={cyclic catalytic reaction--diffusion SPDE, square-root coefficient,  weak variational solution, strong solution, pathwise uniqueness, Hellinger distance}
}

\allowdisplaybreaks
\numberwithin{equation}{section}

\theoremstyle{plain}
\newtheorem{theorem}{Theorem}[section]

\newaliascnt{proposition}{theorem}
\newtheorem{proposition}[proposition]{Proposition}
\aliascntresetthe{proposition}

\newaliascnt{lemma}{theorem}
\newtheorem{lemma}[lemma]{Lemma}
\aliascntresetthe{lemma}

\newaliascnt{corollary}{theorem}
\newtheorem{corollary}[corollary]{Corollary}
\aliascntresetthe{corollary}

\newaliascnt{assumption}{theorem}
\newtheorem{assumption}[assumption]{Assumption}
\aliascntresetthe{assumption}

\theoremstyle{definition}
\newaliascnt{definition}{theorem}
\newtheorem{definition}[definition]{Definition}
\aliascntresetthe{definition}

\newaliascnt{example}{theorem}

\aliascntresetthe{example}

\theoremstyle{remark}
\newaliascnt{remark}{theorem}
\newtheorem{remark}[remark]{Remark}
\aliascntresetthe{remark}

\crefname{theorem}{theorem}{theorems}
\Crefname{theorem}{Theorem}{Theorems}
\crefname{proposition}{proposition}{propositions}
\Crefname{proposition}{Proposition}{Propositions}
\crefname{lemma}{lemma}{lemmas}
\Crefname{lemma}{Lemma}{Lemmas}
\crefname{corollary}{corollary}{corollaries}
\Crefname{corollary}{Corollary}{Corollaries}
\crefname{assumption}{assumption}{assumptions}
\Crefname{assumption}{Assumption}{Assumptions}
\crefname{definition}{definition}{definitions}
\Crefname{definition}{Definition}{Definitions}
\crefname{remark}{remark}{remarks}
\Crefname{remark}{Remark}{Remarks}
\crefname{example}{example}{examples}
\Crefname{example}{Example}{Examples}

\newcommand{\R}{\mathbb{R}}
\newcommand{\N}{\mathbb{N}}
\newcommand{\E}{\mathbb{E}}
\newcommand{\Prob}{\mathbb{P}}
\newcommand{\one}{\mathbf{1}}
\newcommand{\dd}{\,\mathrm{d}}
\newcommand{\ip}[2]{\langle #1,#2\rangle}
\newcommand{\norm}[1]{\lVert #1\rVert}
\newcommand{\abs}[1]{\lvert #1\rvert}
\newcommand{\HS}{\operatorname{HS}}
\newcommand{\essinf}{\operatorname*{ess\,inf}}
\newcommand{\cF}{\mathcal{F}}
\newcommand{\cH}{\mathcal{H}}
\newcommand{\cX}{\mathcal{X}}
\newcommand{\cW}{\mathcal{W}}

\newcommand{\eps}{\varepsilon}

\newcommand{\Qmat}{\mathbf Q}
\newcommand{\Qop}{\mathsf Q}

\begin{document}

\title{Strong well-posedness for cyclic catalytic reaction--diffusion SPDEs with mutation and immigration}

\author{Jie Xiong\thanks{Department of Mathematics and Shenzhen International Center for Mathematics, Southern University of Science and Technology, Shenzhen 518055, China. Email: \texttt{xiongj@sustech.edu.cn}.}
\and Wen Xu\thanks{School of Mathematical Sciences, Peking University, Beijing 100871, China. Email: \texttt{xuwen@math.pku.edu.cn}.}}
\date{}

\maketitle

\begin{abstract}
\unboldmath
We study cyclic catalytic stochastic reaction–diffusion systems with mutation and immigration. The equations have square-root catalytic coefficients and spatially colored noise. Under a condition requiring the cyclic inward drift to dominate the corresponding noise covariance, we establish global strong well-posedness for nonnegative initial states. Our approach combines positivity-preserving approximations, uniform energy estimates, and a concave power entropy that controls the occupation of states near zero. These boundary estimates allow us to derive a Hellinger stability inequality for two synchronized weak variational solutions, yielding pathwise uniqueness directly in this solution class. We then use joint compactness of the approximating solutions and their driving noise, together with martingale identification and the Gyöngy–Krylov criterion, to construct a probabilistically strong solution on the prescribed Wiener space.  
\end{abstract}

\noindent\textbf{Keywords:} Cyclic catalytic reaction--diffusion SPDE; square-root coefficient;  weak variational solution; strong solution; pathwise uniqueness; Hellinger distance.

\smallskip
\noindent\textbf{2020 Mathematics Subject Classification:} Primary 60H15; Secondary 60H07, 60H10, 60J80, 60K35

\section{Introduction}\label{sec:intro}

In catalytic branching models, the branching rate of one population
depends on the density of another. In a cyclic system, population $i+1$
acts as the catalyst for population $i$, leading to noise coefficients
of the form $\sqrt{u_i(t,x)u_{i+1}(t,x)}$.
Mutually catalytic superprocesses were studied by Dawson and Perkins
and their collaborators
\cite{DawsonPerkins1998,DEFMPX1,DEFMPX2,DFMPX}, while cyclically catalytic super-Brownian motion was
introduced by Fleischmann and Xiong
\cite{FleischmannXiong2001}.

 At the finite-dimensional level, Dawson et. al \cite{DawsonFleischmannXiong2005} proved strong uniqueness for the spaceless cyclic system 
\[
 \dd X_i(t)=\alpha_iX_i(t)\,\dd t
 +\sqrt{\gamma_iX_i(t)X_{i+1}(t)}\,\dd B_i(t),
 \qquad i=1,\ldots,K.
\]
In the equation above, a coordinate that reaches zero is trapped there, and the proof relies crucially on this absorbing property.  By contrast, the off-diagonal mutation and immigration terms
can push a zero coordinate back into the interior, so that the boundary-trapping argument is unavailable.  Thus the theorem in 
\cite{DawsonFleischmannXiong2005} does not address the  competition between the inward term $q_{i,i+1}X_{i+1}$ and the variance $\lambda_iX_iX_{i+1}$.

He \cite{He2009}  subsequently studied strong uniqueness for finite-dimensional catalytic branching diffusions with diffusion coefficients of the form $\sqrt{X_i f_i(X)}$ under additional structural hypotheses .  More recently, Xiong and Xu \cite{XiongXuMutually2026} proved pathwise uniqueness for the two-dimensional spaceless mutually catalytic system with arbitrary mutation rates $q_{12},q_{21}\ge0$, positive branching parameters, and an interior initial state, without imposing a drift--covariance gap.   

Weak uniqueness for broader finite-dimensional catalytic branching networks was developed through martingale-problem and perturbation methods by Dawson and Perkins \cite{DawsonPerkins2006} and subsequently by Kliem \cite{Kliem2009}.  These results establish uniqueness in law for classes of singular diffusions, rather than the pathwise Hellinger stability or the prescribed-noise strong construction proved here.  They nevertheless clarify why weak uniqueness cannot simply be taken for granted when a catalytic coefficient degenerates on a union of coordinate faces.

For scalar stochastic heat equations with non-Lipschitz multiplicative coefficients, the available uniqueness mechanisms depend strongly on the spatial covariance.  Mytnik et. al  \cite{MytnikPerkinsSturm2006} treated sufficiently regular spatially colored noise by a Yamada--Watanabe argument; for space--time white noise, Mytnik and Perkins obtained pathwise uniqueness under a stronger H\"older threshold in the state variable \cite{MytnikPerkins2011}.  Neither result directly controls the vector coefficient $\sqrt{u_i u_{i+1}}$, whose singular derivatives couple two coordinates and whose useful cancellation also involves the incoming cyclic drift.

A different strong-solution route represents a measure-valued process through its cumulative distribution function.  Xiong obtained a distribution-function-valued SPDE representation of super-Brownian motion and proved strong uniqueness by an extended Yamada--Watanabe method \cite{Xiong2013}; related distribution-function-valued systems and interacting superprocesses were studied by Wang et. al \cite{WangYangZhou2017} and by Ji et. al \cite{JiLiuXiong2022}.  In those works the state variable is a distribution function and the driving white noise is indexed through the cumulative mass variable.  The present state is instead an $L^2$-density vector on a bounded spatial interval, and its trace-class colored covariance acts through fixed spatial modes.  Consequently, the distribution-function transformation and its order structure are not available here.

The stochastic partial differential equation studied here is
\begin{equation}\label{eq:intro-spde}
\begin{aligned}
 \dd u_i(t,x)
 &=\left[\frac12\Delta_Nu_i(t,x)+\sum_{j=1}^Kq_{ij}(x)u_j(t,x)+\theta_i(x)\right]\dd t\\
 &\quad+\sqrt{u_i(t,x)u_{i+1}(t,x)}
       \sum_{m=1}^{\infty}\sigma_{im}(x)\,\dd\beta_{im}(t),
 \qquad i=1,\ldots,K.
\end{aligned}
\end{equation}
The coefficient field $\Qmat=(q_{ij})$ is a \emph{Metzler reaction matrix}: all off-diagonal entries are nonnegative and represent the mutation rates, while the diagonal entries may have either sign.  The terminology allows mutation or transfer between types, but no conservativity or Markov-generator condition is imposed, diagonal growth and killing are permitted.  The vector $\theta$ is nonnegative and represents immigration. The processes $\beta_{im},\ i=1,2,\cdots,K;\ m=1,2,\cdots$ are independent Brownian motions. We use the convention $u_{K+1}=u_1$ throughout, the same convention applies to $q_{i,i+1}$, so $q_{K,K+1}$ means $q_{K,1}$.

The proof separates two approximation procedures.  For a fixed boundary regularization, an auxiliary amplitude truncation gives a globally Lipschitz Hilbert--Schmidt coefficient.  The corresponding finite-mode equations preserve the positive cone, and their covariance-tail estimate permits $M\to\infty$, producing the equation driven by the complete cylindrical Wiener process.  The amplitude truncation is then removed by consistency and a nonexplosion estimate.  Only after this construction is complete do we let the boundary scale tend to zero.

The proof of the principal theorem has four stages.  First, we construct the countably driven boundary-regularized equations and prove positivity, global variational bounds, and continuous $C(\overline I)^K$-valued paths.  Second, a concave power entropy uses the strict cyclic drift--covariance gap to control the weighted occupation of the coordinate faces.  Third, we apply a two-component variational It\^o formula directly to synchronized nonnegative weak solutions.  The heat term is dissipative by convexity, the common-noise trace is written with its cross-covariation term, and the remaining boundary error is removed by the power entropy.  This gives pathwise uniqueness in the probabilistically weak variational class.  Fourth, we identify every joint state--noise limit by its martingales, quadratic variations, and cross-variations.  The Gy\"ongy--Krylov criterion then yields convergence on the original stochastic basis, after which adaptedness to the prescribed noise and the full stochastic integral are established.

The relation to the authors' finite-dimensional  paper \cite{XiongXu2026} is as follows.  What is inherited is the scalar boundary linearization $g_n(r)=r/\sqrt{r+\varepsilon_n}$ and the recognition, from boundary-sensitive Malliavin estimates, that the combination $q_{i,i+1}-\gamma_i/4$ is structurally relevant for that SDE approximation.  That paper constructs strong solutions through Malliavin compactness and obtains pathwise uniqueness by combining strong existence with weak uniqueness.  By contrast, it contains neither the two-solution Hellinger identity  nor a direct synchronized-solution stability estimate.  Malliavin compactness and the dual Yamada--Watanabe route are not used to prove the SPDE uniqueness theorem below.

The main contributions can be matched to the three main theorems.  First, \Cref{thm:power-main} adds the heat-energy production term, proves boundary occupation for every incoming Metzler component, and does so for the complete spatial covariance diagonal.  Second, \Cref{thm:stability} is a direct pathwise-stability theorem for two arbitrary synchronized weak variational solutions: its proof combines the Hessian dissipation of the Neumann heat operator, an infinite-mode mixed trace, and a two-component variational It\^o formula.  Third, \Cref{thm:main} constructs the solution on a prescribed Wiener space by keeping the state and the entire countable Brownian family in the same compactness problem and identifying martingales, brackets, and cross-variations in the limiting joint filtration.  Thus the novelty is the combination of Hellinger geometry, parabolic dissipation, spatially dependent trace-class color, a general Metzler drift, and complete state--noise identification.  Stochastic factorization, the variational It\^o formula, and the Gy\"ongy--Krylov criterion are used as the main tools.

The paper is organized as follows.  \Cref{sec:framework} introduces the reaction matrix, the cylindrical coordinate process, weak and strong solution concepts, and the main results.  \Cref{sec:approx} constructs the countably driven regularized equations.  \Cref{sec:apriori} proves energy, temporal, spatial, and boundary estimates.  \Cref{sec:hellinger-algebra} establishes Hellinger stability and pathwise uniqueness for synchronized weak solutions.  \Cref{sec:compactness} identifies all weak limits and constructs the strong solution on the prescribed Wiener space.

\section{Preliminaries and main results}\label{sec:framework}

Let $I=(0,1)$, $\bar I=[0,1]$, and define
$
H:=L^2(I), V:=H^1(I), V^*:=(H^1(I))^*,
$ 
so that $V\subset H\subset V^*$ is a Gelfand triple.  For a real-valued function $r$ write $r^{+}=\max\{r,0\}$ and $r^{-}=\max\{-r,0\}$.  
We use $\ip{\cdot}{\cdot}$ for the inner product in $H$
and for duality pairings, with subscripts when needed.
Let $A=\frac12\Delta_N$, where $\Delta_N$ is the Neumann
 Laplacian on $H$. 
 
  In variational form,
 \begin{equation}\label{eq:Avar}
 	\ip{Au}{\varphi}_{V^*,V}
 	=-\frac12\int_I\partial_xu\,\partial_x\varphi\,\dd x,
 	\qquad u,\varphi\in V.
 \end{equation}
 Let $P_t=e^{tA}$ denote the analytic semigroup generated
 by $A$. 
  We equip $H^K$ and $V^K$ with the norms
  $\|u\|_{H^K}^2=\sum_{i=1}^K\|u_i\|_H^2$ and
  $\|u\|_{V^K}^2=\sum_{i=1}^K\|u_i\|_V^2$, respectively. We write $D(B)$ for the domain of an operator $B$ and
  $\mathrm{Id}$ for the identity operator on $H$.

Let $(e_k)_{k\ge0}$ be the real orthonormal Neumann eigenbasis
of $H$, with $-Ae_k=\nu_ke_k$, where
$e_0=1$, $e_k(x)=\sqrt{2}\cos(k\pi x)$ for $k\ge1$,
and $\nu_k=k^2\pi^2/2$. For $\alpha\ge0$, define
\begin{equation}\label{eq:Halpha-definition}
	H_A^\alpha(I):=D\bigl((\mathrm{Id}-A)^{\alpha/2}\bigr),
	\qquad
	\|v\|_{H_A^\alpha}^2
	=\sum_{k=0}^{\infty}(1+\nu_k)^\alpha
	|\ip{v}{e_k}_H|^2.
\end{equation}
For $0\le\alpha\le1$, this space coincides with the usual
Sobolev space $H^\alpha(I)$ with equivalent norms, and we
omit the subscript $A$. In particular,
$H^\alpha(I)\hookrightarrow C(\bar I)$ continuously
for $1/2<\alpha\le1$.
For $\alpha<0$, let $H_A^\alpha(I)$ be the completion of $H$
under the spectral norm in \eqref{eq:Halpha-definition}.
We write
\[
H^{-2}(I):=H_A^{-2}(I)=D(A)^*,
\qquad
D(A)=H_A^2(I)
=\{v\in H^2(I):v'(0)=v'(1)=0\},
\]
 In particular, its dual inner product is given by 
\[
\ip{f}{\varphi}_{H^{-2},D(A)}
=\ip{f}{\varphi}_H,
\qquad f\in H,\quad \varphi\in D(A).
\]

We next introduce the driving cylindrical Wiener processes.
For each $i=1,\ldots,K$, let
\[
U_i=\ell^2
:=\left\{a=(a_m)_{m\ge1}:\ a_m\in\mathbb R,\ 
\sum_{m=1}^{\infty}a_m^2<\infty\right\},
\qquad
\langle a,b\rangle_{U_i}
=\sum_{m=1}^{\infty}a_mb_m.
\]
Let $(f_m)_{m\ge1}$ be its canonical orthonormal basis (ONB) of $\ell^2$.
Write
\begin{equation}\label{eq:1013}
W_i(t)=\sum_{m=1}^{\infty}\beta_{im}(t)f_m.
\end{equation}
The processes $W_1,\ldots,W_K$ are thus independent
cylindrical Wiener processes on $U_1,\ldots,U_K$, respectively.

For each $i=1,\ldots,K$, let $(\sigma_{im})_{m\ge1}\subset H$
be deterministic spatial functions such that
\begin{equation*}
\sum_{m=1}^{\infty}\|\sigma_{im}\|_H^2<\infty.
\end{equation*}

Define the following  covariance kernel and its diagonal 
\begin{equation}\label{eq:covkernel}
 K_i(x,y)=\sum_{m=1}^{\infty}\sigma_{im}(x)\sigma_{im}(y),
 \qquad
 \lambda_i(x)=K_i(x,x)=\sum_{m=1}^{\infty}\sigma_{im}(x)^2.
\end{equation}
For $N\ge1$, write

\begin{equation}\label{eq:lambda-tail}
 \lambda_i^{>N}(x)=\sum_{m>N}\sigma_{im}(x)^2,
 \qquad
 \rho_N=\max_{1\le i\le K}\|\lambda_i^{>N}\|_{L^\infty(I)}.
\end{equation}

Next, we define the Polish path space for the infinite Brownian family.
Fix $s_0>1/2$ and define
\[
 U_{i,0}=\left\{z=(z_m)_{m\ge1}:
 \|z\|_{U_{i,0}}^2=\sum_{m=1}^{\infty}(1+m)^{-2s_0}|z_m|^2<\infty\right\}.
\]
The embedding $U_i\hookrightarrow U_{i,0}$ is Hilbert--Schmidt.  Hence $W_i$ has a version in $C([0,T];U_{i,0})$. 
Define 
\[
 \mathbb U=\bigoplus_{i=1}^KU_i,
 \qquad
 \mathbb U_0=\bigoplus_{i=1}^KU_{i,0},
 \qquad
 W=(W_1,\ldots,W_K).
\]
The noise path $W$ is therefore a random variable in the Polish space $C([0,T];\mathbb U_0)$.  

Now we state the assumptions needed in this article.

\begin{assumption}\label{ass:coefficients}
Let $K\ge2$.  For all $i,j=1,\ldots,K$,
\[
 q_{ij}\in L^\infty(I),
 \quad q_{ij}(x)\ge0\quad\text{for }i\ne j\text{ and a.e. }x,
\]
  Moreover,
\[
 \theta_i\in L^\infty(I),\qquad \theta_i\ge0\quad\text{a.e.},
\]
and
\begin{equation}\label{eq:mode-summability}
 \sum_{m=1}^{\infty}\|\sigma_{im}\|_{L^\infty(I)}^2<\infty.
\end{equation}
The deterministic initial condition satisfies
\[
 u_0=(u_{1,0},\ldots,u_{K,0})\in V^K,
 \qquad u_{i,0}\ge0\quad\text{a.e.}
\]
\end{assumption}

We shall use the column-growth constant
\begin{equation}\label{eq:q-constants}
 q_{\mathrm H}
 =\max_{1\le j\le K}
 \left\|2q_{jj}^{+}+\sum_{i\ne j}q_{ij}\right\|_{L^\infty(I)}.
\end{equation}
 We also write

\begin{equation}\label{eq:coefficient-norms}
\mathfrak q_0=\max_{i,j}\|q_{ij}\|_{L^\infty},\qquad
\mathfrak t_0=\max_i\|\theta_i\|_{L^\infty},\qquad
\mathfrak s_0=\max_i\sum_{m\ge1}\|\sigma_{im}\|_{L^\infty}^2,
\end{equation}
and
\begin{equation}\label{eq:Lambda}
 \Lambda=\max_{1\le i\le K}\|\lambda_i\|_{L^\infty(I)}.
\end{equation}
\begin{assumption}\label{ass:strict-gap}
There is $\delta_*>0$ such that
\begin{equation}\label{eq:strict-gap}
 q_{i,i+1}(x)-\frac14\lambda_i(x)\ge\delta_*
 \qquad\text{for every }i\text{ and a.e.  }x\in I.
\end{equation}
\end{assumption}

Unless a dependence is displayed explicitly, a generic constant may depend on $K$, the time horizon, $\mathfrak q_0$, $\mathfrak t_0$, $\mathfrak s_0$, $\delta_*$, and the indicated moment of $u_0$, but never on the boundary index $n$, the mode cutoff $M$, the amplitude cutoff $L$, or the entropy regularization $\eta$.  A constant carrying a subscript $R$, $L$, or $n$ may additionally depend on that parameter.

We next define the state-dependent noise operators and the SPDE.
For a nonnegative vector $z=(z_1,\ldots,z_K)$, define
\begin{equation}\label{eq:G-operator}
 G_i(z)\in L_2(U_i,H),
 \qquad
 G_i(z)f_m=\sigma_{im}\sqrt{z_i z_{i+1}}.
\end{equation}
Then
\begin{equation}\label{eq:G-HS}
 \|G_i(z)\|_{L_2(U_i,H)}^2
 =\int_I\lambda_i(x)z_i(x)z_{i+1}(x)\,\dd x
 \le\frac{\|\lambda_i\|_\infty}{2}
 \bigl(\|z_i\|_H^2+\|z_{i+1}\|_H^2\bigr).
\end{equation}
The equation is rewritten as
\begin{equation}\label{eq:spde}
\left\{
\begin{aligned}
 \dd u_i(t)
 &=\bigl[Au_i(t)+(\Qop u(t))_i+\theta_i\bigr]\dd t
   +G_i(u(t))\,\dd W_i(t),\\
 u_i(0)&=u_{i,0},
\end{aligned}
\right.
\qquad i=1,\ldots,K.
\end{equation}

\begin{definition}\label{def:weak-solution}
Fix $T>0$.  A weak variational (WV) solution of \eqref{eq:spde} consists of a stochastic basis
$(\Omega,\cF,(\cF_t)_{t\in[0,T]},\Prob)$ satisfying the usual conditions, independent $(\cF_t)$-cylindrical Wiener processes $W_i$ on $U_i$, and an $(\cF_t)$-progressively measurable nonnegative process $u$ such that
\begin{equation}\label{eq:solution-space}
 u\in L^2\bigl(\Omega;C([0,T];H^K)\bigr)
 \cap L^2\bigl(\Omega\times[0,T];V^K\bigr),
 \qquad u_i\ge0,
\end{equation}
and, for every $\varphi\in V$, every $i$, and every $t\in[0,T]$,
\begin{align}\label{eq:variational}
 \ip{u_i(t)}{\varphi}
 &=\ip{u_{i,0}}{\varphi}
 -\frac12\int_0^t\!\int_I\partial_xu_i(s)\partial_x\varphi\,\dd x\dd s
 +\int_0^t\ip{\sum_{j=1}^Kq_{ij}u_j(s)+\theta_i}{\varphi}\dd s\notag\\
 &\quad+\int_0^t\ip{G_i(u(s))^*\varphi}{\dd W_i(s)}_{U_i}
\end{align}
almost surely.   Two WV solutions, possibly corresponding to different admissible initial states, are \emph{synchronized} when they are defined on the same stochastic basis and are driven by the same cylindrical Wiener processes.  
\end{definition}

\begin{definition}\label{def:strong-solution}
On a prescribed stochastic basis carrying $W$, a WV solution is probabilistically strong if it is adapted to the completed natural filtration $(\cF_t^W)$ of $W$.  Pathwise uniqueness means that any two synchronized WV solutions with the same initial state are indistinguishable.  
\end{definition}

\begin{remark}[Mild formulation]\label{rem:mild}
Once \eqref{eq:variational} holds, semigroup duality gives
\begin{align}\label{eq:mild}
 u_i(t)
 &=P_tu_{i,0}
 +\int_0^tP_{t-s}\left[\sum_{j=1}^Kq_{ij}u_j(s)+\theta_i\right]\dd s
 +\int_0^tP_{t-s}G_i(u(s))\,\dd W_i(s).
\end{align}
\end{remark}

Finally, we state the principal results.  For $r,s\ge0$, set
\begin{equation}\label{eq:hellinger-distance}
 h(r,s)=\bigl(\sqrt r-\sqrt s\bigr)^2,
 \qquad
 \cH(u,v)=\sum_{i=1}^K\int_Ih(u_i(x),v_i(x))\,\dd x.
\end{equation}
For each row define the off-cycle incoming part
\begin{equation}\label{eq:R-i}
\mathcal R_i(z)=\sum_{\substack{1\le j\le K\\j\ne i,\,j\ne i+1}}q_{ij}z_j,
\end{equation}
where an empty sum is zero and the cyclic convention is understood. 

We now present the main results of this paper.

\begin{theorem}[Power entropy and boundary occupation]\label{thm:power-main}
Under \Cref{ass:coefficients,ass:strict-gap}, there exists $p\in(0,1/2)$ such that, for every $T>0$, every nonnegative WV solution of \eqref{eq:spde}, every $0<\eta\le1$, and every $i$,
\begin{align}\label{eq:power-main}
 &\E\int_0^T\!\int_I
 \frac{|\partial_xu_i|^2}{(u_i+\eta)^{2-p}}\,\dd x\dd t
 +\E\int_0^T\!\int_I
 \frac{u_{i+1}}{(u_i+\eta)^{1-p}}\,\dd x\dd t\notag\\
 &\quad+\E\int_0^T\!\int_I
 \frac{\mathcal R_i(u)+\theta_i}{(u_i+\eta)^{1-p}}\,\dd x\dd t
 \le C_T.
\end{align}
Consequently, for $0<r\le1$,
\begin{align}
 \E\int_0^T\!\int_Iu_{i+1}\one_{\{u_i\le r\}}\,\dd x\dd t
 &\le C_Tr^{1-p},\label{eq:occupation-main}\\
 \E\int_0^T\!\int_I\bigl(\mathcal R_i(u)+\theta_i\bigr)
 \one_{\{u_i\le r\}}\,\dd x\dd t
 &\le C_Tr^{1-p},\label{eq:occupation-other-main}\\
 \E\int_0^T\!\int_I|\partial_xu_i|^2\one_{\{u_i\le r\}}\,\dd x\dd t
 &\le C_Tr^{2-p}.\label{eq:occupation-gradient-main}
\end{align}
\end{theorem}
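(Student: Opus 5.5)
The plan is to apply the variational Itô formula to the concave power entropy
\[
\Phi_\eta(r)=\frac{(r+\eta)^{p}}{p(p-1)}\quad\text{(equivalently }-(r+\eta)^p\text{ up to a positive factor)},
\]
taken spatially as $\int_I\Phi_\eta(u_i(t,x))\,\dd x$.
\begin{itemize}[leftmargin=*]
\item Since $\Phi_\eta$ is smooth on $[0,\infty)$ with bounded derivatives (for fixed $\eta$), the Itô formula for $\int_I F(u_i)\,\dd x$ on a nonnegative WV solution is justified. Use a standard mollification/localization if needed, relying on $u\in L^2(\Omega;C([0,T];H^K))\cap L^2(\Omega\times[0,T];V^K)$.
\item The function $\psi(r)=-(r+\eta)^p$ is bounded on bounded sets and grows sublinearly, so $\E\sup_t\int_I(u_i+\eta)^p$ is controlled by $C(1+\E\sup_t\|u_i\|_H^2)$. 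The final bound therefore needs only the energy estimate, which comes from Itô on $\|u\|_{H^K}^2$ together with \eqref{eq:G-HS} and Gronwall.
\end{itemize}

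Write $\psi'(r)=-p(r+\eta)^{p-1}$ and $\psi''(r)=p(1-p)(r+\eta)^{p-2}$. Itô gives, in expectation (the stochastic integral is a true martingale since $\psi'$ is bounded by $p\eta^{p-1}$),
\[
\E\int_I\psi(u_i(T))-\int_I\psi(u_{i,0})=\E\int_0^T\!\!\int_I\Bigl[\tfrac12\psi''(u_i)|\partial_xu_i|^2+\psi'(u_i)\Bigl(\textstyle\sum_jq_{ij}u_j+\theta_i\Bigr)+\tfrac12\psi''(u_i)\lambda_iu_iu_{i+1}\Bigr].
\]
The Neumann boundary contributes no boundary term. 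Move the good terms to the left:
\begin{itemize}[leftmargin=*]
\item The heat term gives $\frac{p(1-p)}2\int\frac{|\partial_xu_i|^2}{(u_i+\eta)^{2-p}}$.
\item The drift terms give $p\int\frac{q_{i,i+1}u_{i+1}+\mathcal R_i(u)+\theta_i}{(u_i+\eta)^{1-p}}$.
\end{itemize}
Two terms remain on the right:
\begin{itemize}[leftmargin=*]
\item The diagonal term $p\int q_{ii}u_i(u_i+\eta)^{p-1}\le p\mathfrak q_0\int(u_i+\eta)^p$ is controlled by the energy.
\item The Itô correction is $\frac{p(1-p)}2\int\lambda_i\frac{u_i}{u_i+\eta}\frac{u_{i+1}}{(u_i+\eta)^{1-p}}\le\frac{p(1-p)}2\int\frac{\lambda_iu_{i+1}}{(u_i+\eta)^{1-p}}$.
\end{itemize}

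The key step, and the only place \Cref{ass:strict-gap} enters, is the comparison of coefficients in front of $u_{i+1}(u_i+\eta)^{p-1}$. The coefficient is $p\bigl(q_{i,i+1}-\frac{1-p}{2}\lambda_i\bigr)$. This is not $\ge p\delta_*$ for $p$ near $0$, since the gap in \eqref{eq:strict-gap} is $\lambda_i/4$ rather than $\lambda_i/2$. So naive estimation fails, and this is the main obstacle.

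To fix it, I would not discard the factor $u_i/(u_i+\eta)$. I would also not use the crude bound $\psi''$ alone. Instead I would combine the heat term with the Itô term, or more naturally choose $p$ close to $1/2$: the Itô coefficient $\frac{1-p}{2}\lambda_i$ equals $\frac14\lambda_i$ exactly at $p=1/2$. Taking $p=\frac12-\epsilon$ with $\epsilon\Lambda/2\le\delta_*/2$ gives a coefficient
\[
q_{i,i+1}-\tfrac14\lambda_i-\tfrac{\epsilon}{2}\lambda_i\ge\delta_*/2>0 .
\]
This is consistent with the statement's $p\in(0,1/2)$ and with the $q_{i,i+1}-\gamma_i/4$ structure mentioned in the introduction. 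The other constants are $p(1-p)/2$ and $p$, both bounded below for this fixed $p$, and all bounds are uniform in $\eta\in(0,1]$. This yields \eqref{eq:power-main}.

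The occupation bounds then follow by Chebyshev-type restriction to $\{u_i\le r\}$ with $\eta\downarrow0$ via Fatou, or simply $\eta=r$:
\begin{itemize}[leftmargin=*]
\item On $\{u_i\le r\}$ we have $(u_i+r)^{1-p}\le(2r)^{1-p}$. Hence $u_{i+1}\one_{\{u_i\le r\}}\le(2r)^{1-p}u_{i+1}(u_i+r)^{p-1}$, which gives \eqref{eq:occupation-main}.
\item The same argument gives \eqref{eq:occupation-other-main}.
\item For the gradient, use $(2r)^{2-p}$, which gives \eqref{eq:occupation-gradient-main}.
\end{itemize}
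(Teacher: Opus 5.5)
Your proposal is correct and follows essentially the same route as the paper. The paper applies the variational It\^o formula to $\int_I(u_i+\eta)^p\,\dd x$, which is the negative of your $\psi$. It combines the cyclic drift with the It\^o trace to get the coefficient $q_{i,i+1}-\frac{1-p}{2}\lambda_i$, and chooses $p=\frac12-\rho$ with $\rho\Lambda/2\le\delta_*/2$ exactly as you do. It then bounds the diagonal loss and the terminal entropy by the energy estimate, and takes $\eta=r$ for the occupation bounds.

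One small slip: in your displayed identity the heat contribution is $\ip{Au_i}{\psi'(u_i)}_{V^*,V}=-\frac12\int_I\psi''(u_i)|\partial_xu_i|^2\,\dd x$. It should carry a minus sign, which is what your subsequent ``move the good terms to the left'' step actually uses.
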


\begin{theorem}[Hellinger stability and pathwise uniqueness]\label{thm:stability}
Under \Cref{ass:coefficients,ass:strict-gap}, let $u$ and $v$ be synchronized nonnegative WV solutions of \eqref{eq:spde}.  Then
\begin{equation}\label{eq:stability-main}
 \E\cH(u(t),v(t))
 \le e^{\mathfrak q_{\mathrm H}t}\E\cH(u(0),v(0)),
 \qquad 0\le t\le T.
\end{equation}
In particular, pathwise uniqueness holds.
\end{theorem}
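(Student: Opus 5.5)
We apply a regularized Itô formula to the Hellinger integrand and pass to the limit using the boundary estimates of \Cref{thm:power-main}. Fix $\eps\in(0,1]$ and set $h_\eps(r,s)=(\sqrt{r+\eps}-\sqrt{s+\eps})^2$. This function is smooth on $[0,\infty)^2$ with bounded second derivatives, and it is jointly convex, since $(a,b)\mapsto(\sqrt a-\sqrt b)^2=a+b-2\sqrt{ab}$ is convex. The plan has four steps: (i) apply the two-component variational It\^o formula to $\cH_\eps(u,v)=\sum_i\int_I h_\eps(u_i,v_i)\,\dd x$ along synchronized WV solutions; (ii) bound the drift by its four contributions; (iii) let $\eps\to0$; (iv) apply Gronwall. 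The formula in (i) is legitimate because $u,v\in L^2(\Omega;C([0,T];H^K))\cap L^2(\Omega\times[0,T];V^K)$ and $\partial_r h_\eps,\partial_s h_\eps$ are Lipschitz. The stochastic integrals are true martingales after localization, because $\partial h_\eps$ grows at most like $\eps^{-1/2}(1+r+s)^{1/2}$.

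In step (ii), the heat contribution is $-\tfrac12\int\nabla^2h_\eps(u_i,v_i)[(\partial_xu_i,\partial_xv_i)]^{\otimes2}\,\dd x\le0$ by convexity. The reaction term $\partial_rh_\eps\,(\Qop u)_i+\partial_sh_\eps\,(\Qop v)_i$ is treated as follows.
\begin{itemize}
\item The diagonal part of $q_{ii}$ is bounded by $2q_{ii}^+h_\eps$ up to an $O(\eps)$-type error. This follows from the identity $r\partial_rh+s\partial_sh=h$ for the $1$-homogeneous $h$.
\item For an off-diagonal $q_{ij}$ we use
\[
\partial_rh(u_i,v_i)u_j+\partial_sh(u_i,v_i)v_j=\Bigl(1-\sqrt{v_i/u_i}\Bigr)u_j+\Bigl(1-\sqrt{u_i/v_i}\Bigr)v_j .
\]
This is bounded by $h(u_j,v_j)$ minus the nonnegative square $\bigl(\sqrt{u_jv_i/u_i}-\sqrt{v_j}\bigr)^2$-type expression, which produces the column constant $\mathfrak q_{\mathrm H}$.
\item The immigration term gives $-\theta_i(\sqrt{u_i}-\sqrt{v_i})^2/\sqrt{u_iv_i}$-type terms, which are $\le0$ in the limit.
\end{itemize}
The Itô correction from the common noise is
\[
\tfrac12\lambda_i\Bigl[\partial_r^2h\,u_iu_{i+1}+2\partial_{rs}h\sqrt{u_iu_{i+1}v_iv_{i+1}}+\partial_s^2h\,v_iv_{i+1}\Bigr].
\]
With $\partial_r^2h=\tfrac12\sqrt s\,r^{-3/2}$, $\partial_s^2h=\tfrac12\sqrt r\,s^{-3/2}$ and $\partial_{rs}h=-\tfrac12(rs)^{-1/2}$, this equals
\[
\tfrac14\lambda_i\,(u_iv_i)^{-1/2}\bigl(\sqrt{v_iu_{i+1}}-\sqrt{u_iv_{i+1}}\bigr)^2 .
\]
The inward off-diagonal drift $q_{i,i+1}$ produces exactly the negative square
\[
-q_{i,i+1}(u_iv_i)^{-1/2}\bigl(\sqrt{v_iu_{i+1}}-\sqrt{u_iv_{i+1}}\bigr)^2 ,
\]
since $(\sqrt{u_jv_i/u_i}-\sqrt{v_j})^2\sqrt{u_i/v_i}$ rearranges to this form. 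By \Cref{ass:strict-gap} the sum of the two is $\le-\delta_*(\cdots)^2\le0$. So at $\eps=0$ the drift is formally $\le\mathfrak q_{\mathrm H}\,\cH$.

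The main obstacle is step (iii), removing $\eps$. The $\eps$-regularized cancellations are not exact. The errors involve $\eps$-weighted quantities such as $\lambda_i\eps\,u_{i+1}/(u_i+\eps)^{3/2}$-type terms, mismatches $q_{i,i+1}u_{i+1}\eps/(u_i+\eps)$, and gradient terms. I would split each error according to whether $u_i\le r$ or $u_i>r$ (similarly for $v_i$).
\begin{itemize}
\item On $\{u_i>r\}$ the errors are $O(\eps/r^{3/2})$ times integrable quantities.
\item On $\{u_i\le r\}$ they are bounded by $C\,u_{i+1}/(u_i+\eps)^{1-p}\cdot(\eps/(u_i+\eps))^{1/2-p}$-type expressions. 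These are controlled by the weighted bounds \eqref{eq:power-main}, with $\eta=\eps$, and by the occupation bounds \eqref{eq:occupation-main}--\eqref{eq:occupation-gradient-main}, the last being used for the gradient/Hessian mismatch.
\end{itemize}
I expect the required algebra to be that every error term is dominated by $\eps^{a}$ times the entropy integrands in \eqref{eq:power-main} plus the occupation rates $r^{1-p}$. Choosing $r=\eps^{b}$ then sends all errors to zero. The $\delta_*$ slack helps absorb the regularized cross terms before this step.

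Having $\E\cH_\eps(u(t),v(t))\le\E\cH_\eps(u(0),v(0))+\mathfrak q_{\mathrm H}\int_0^t\E\cH_\eps\,\dd s+o(1)$, we pass to the limit by dominated convergence ($h_\eps\le h+\eps$-bounds and $u,v\in L^2$). Gronwall then gives \eqref{eq:stability-main}. For equal initial states $\cH(u(t),v(t))=0$ a.s. for each $t$, hence $u_i=v_i$ a.e. By continuity of paths in $H^K$, $u$ and $v$ are indistinguishable, which is pathwise uniqueness.
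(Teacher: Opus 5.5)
Your architecture matches the paper's: regularize by $h_\eps$, use convexity for the heat term, the perspective identity for off-diagonal inputs, combine the cyclic entry with the common-noise trace, remove the boundary error with \eqref{eq:power-main} at $\eta=\eps$, and close with Gronwall. However, step (i) rests on a false claim, and step (iii), the only place the strict gap matters, is not carried out.

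\emph{Step (i).} $h_\eps$ does not have bounded second derivatives on $[0,\infty)^2$, and $\partial h_\eps$ is not Lipschitz. For example, $\partial_{rr}h_\eps(r,s)=\tfrac12\sqrt{s+\eps}\,(r+\eps)^{-3/2}$ is unbounded in $s$. The variational It\^o formula needs either a bounded Hessian or a pointwise range confined to a compact set, so it does not apply in the bare energy class. The missing ingredient is a sup-norm localization. Every WV solution has continuous $C(\overline I)^K$ paths (\Cref{prop:spatial-continuity}), so you can proceed as follows:
\begin{itemize}
\item stop at $\tau_R=\inf\{t:\max_i(\|u_i(t)\|_{C(\overline I)}+\|v_i(t)\|_{C(\overline I)})\ge R\}\wedge T$;
\item replace $h_\eps$ by a $C^2$ function with bounded derivatives that agrees with it on $[0,R]^2$;
\item let $\eps\downarrow0$ at fixed $R$;
\item run Gronwall on the stopped quantity, using $\one_{\{s\le\tau_R\}}\cH(u(s),v(s))\le\cH(u(s\wedge\tau_R),v(s\wedge\tau_R))$;
\item only then send $R\to\infty$.
\end{itemize}

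\emph{Step (iii).} Your list of errors is not the right one, and the splitting at level $r$ together with the occupation bounds is unnecessary. At $\eps>0$ three things are exact:
\begin{itemize}
\item the heat term is nonpositive;
\item the diagonal satisfies $x\partial_xh_\eps+y\partial_yh_\eps=h_\eps\bigl(1+\eps/\sqrt{(x+\eps)(y+\eps)}\bigr)\in[0,2h_\eps]$;
\item the off-diagonal identity $c\,\partial_xh_\eps+d\,\partial_yh_\eps=h(c,d)-|U-V|^2$ holds, with $U=((y+\eps)/(x+\eps))^{1/4}\sqrt c$ and $V=((x+\eps)/(y+\eps))^{1/4}\sqrt d$.
\end{itemize}
So there are no drift mismatches and no gradient errors.

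The only error sits in the trace. It equals $\tfrac{\lambda}{4}|\alpha U-\beta V|^2$ with $\alpha=\sqrt{x/(x+\eps)}$ and $\beta=\sqrt{y/(y+\eps)}$, and must be compared with $-a|U-V|^2$ from the cyclic drift, where $a=q_{i,i+1}$ and $\lambda=\lambda_i$. Write $\alpha U-\beta V=(U-V)-[(1-\alpha)U-(1-\beta)V]$ and apply Young's inequality with $\zeta>0$ small enough that $\tfrac{\lambda}{4}(1+\zeta)\le a-\delta_*/2$. This is exactly where $\delta_*>0$ is needed, since $1+\zeta^{-1}$ blows up as $\zeta\to0$. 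What remains is $C[(1-\alpha)^2U^2+(1-\beta)^2V^2]$, and on $[0,R]^2$
\[
(1-\alpha)^2U^2\le\frac{\eps}{x+\eps}\,c\sqrt{\frac{y+\eps}{x+\eps}}=\frac{\eps\,c\sqrt{y+\eps}}{(x+\eps)^{3/2}}\le\sqrt{R+1}\;\eps^{1/2-p}\,c\,(x+\eps)^{p-1},
\]
with the symmetric bound for the $V$ term. The factor $\sqrt{y+\eps}=\sqrt{v_i+\eps}$ is missing from your error terms and is not controlled by \eqref{eq:power-main} alone. This is a second reason to localize. With the localization, \eqref{eq:power-main} at $\eta=\eps$, which is uniform in $\eta\in(0,1]$, bounds the total error by $C_{R,T}\eps^{1/2-p}\to0$ directly.
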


\begin{theorem}[Strong well-posedness]\label{thm:main}
Under \Cref{ass:coefficients,ass:strict-gap}, equation \eqref{eq:spde} has a unique nonnegative probabilistically strong WV solution driven by the prescribed cylindrical Wiener process $W$.  It satisfies \eqref{eq:mild} and has a version in
\[
 C\bigl([0,T];C(\bar I)^K\bigr)
 \quad\text{for every }T>0.
\]
Furthermore, for every $T>0$ and $q\ge1$,
\begin{equation}\label{eq:main-energy}
 \E\sup_{t\le T}\|u(t)\|_{H^K}^{2q}
 +\E\left(\int_0^T\|u(t)\|_{V^K}^2\,\dd t\right)^q
 \le C_{q,T}\bigl(1+\|u_0\|_{H^K}^{2q}\bigr).
\end{equation}
\end{theorem}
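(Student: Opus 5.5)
Theorem~\ref{thm:main} will be proved by combining a construction of approximate solutions with \Cref{thm:stability}, through the Gy\"ongy--Krylov criterion.

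\textbf{Step 1: regularized equations.} On the prescribed basis, replace $\sqrt{z_iz_{i+1}}$ by a Lipschitz regularization $g_n(z_i)g_n(z_{i+1})$ with $g_n(r)=r^+/\sqrt{r^++\eps_n}$. For fixed $n$, first cap the amplitude at level $L$ and cut the noise to $M$ modes. The resulting equation has globally Lipschitz drift and Hilbert--Schmidt coefficients, so standard variational (Krylov--Rozovskii) theory gives a unique strong solution $u^{n,M,L}$. Positivity comes from an It\^o argument for $\|u_i^-\|_H^2$. Off the positive cone the noise vanishes, and the Metzler drift with $\theta\ge0$ points inward. Letting $M\to\infty$ uses the tail $\rho_M\to0$ from \eqref{eq:lambda-tail}. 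The cap $L$ is removed by consistency up to stopping times together with a nonexplosion bound.

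\textbf{Step 2: uniform estimates.} Apply It\^o's formula to $\|u\|_{H^K}^{2}$ and use \eqref{eq:G-HS} with $g_n(r)^2\le r$. Together with BDG, this gives \eqref{eq:main-energy} uniformly in $n$. Stochastic factorization of the mild form \eqref{eq:mild} then gives fractional time-regularity and $C^\alpha$ spatial bounds. These are uniform in $n$ and yield tightness of $u^n$ in $C([0,T];H^K)\cap L^2(0,T;H^K)$ and in $C([0,T];C(\bar I)^K)$. I would also check that the power entropy of \Cref{thm:power-main} holds uniformly for the regularized equations, since the same computation applies with $g_n$.

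\textbf{Step 3: identification and uniqueness.} Take any two subsequences $n_k,n'_k$ and consider the triple $(u^{n_k},u^{n'_k},W)$ in the Polish space $C([0,T];H^K)^2\times C([0,T];\mathbb U_0)$. This triple is tight, so Skorokhod gives a.s.\ convergence on a new basis. Carry the noise along as a component so that both limits are driven by the same $\tilde W$.

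Identify the limits as WV solutions via martingale identification. Show that
\[
M_i^\varphi(t)=\ip{u_i(t)}{\varphi}-\ip{u_{i,0}}{\varphi}-\int_0^t\ip{u_i(s)}{A\varphi}+\ip{(\Qop u(s))_i+\theta_i}{\varphi}\,\dd s
\]
is a martingale with
\[
\langle M_i^\varphi\rangle_t=\int_0^t\sum_m\ip{\sigma_{im}\sqrt{u_iu_{i+1}}}{\varphi}^2\,\dd s
\]
and cross-variation with $\tilde\beta_{im}$ equal to $\int_0^t\ip{\sigma_{im}\sqrt{u_iu_{i+1}}}{\varphi}\,\dd s$. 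Passing to the limit in the products uses $g_n(a)g_n(b)\to\sqrt{ab}$ locally uniformly, together with uniform integrability from the moments. The three identities then give $\E(M_i^\varphi-\int\ip{G_i^*\varphi}{\dd\tilde W_i})^2=0$.

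The $V^K$-regularity and continuity in $H^K$ follow from the energy bound and lower semicontinuity. So both limits are synchronized WV solutions with the same initial datum, and \Cref{thm:stability} forces them to coincide. Gy\"ongy--Krylov then gives convergence in probability of $u^n$ on the original space to a limit $u$. This $u$ is $\cF^W$-adapted because each $u^n$ is. Passing to the limit in the regularized equation on the original basis, using the same product convergence, shows that $u$ solves \eqref{eq:variational}.

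\textbf{Step 4: conclusions.} The mild form follows from \Cref{rem:mild}. The $C(\bar I)^K$ version comes from the uniform factorization bounds, and \eqref{eq:main-energy} passes to the limit by Fatou. Uniqueness is \Cref{thm:stability}.

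\textbf{Main obstacle.} I expect the main difficulty to be the identification in Step 3 with infinitely many modes. One must control $\sum_{m>N}$ uniformly via $\rho_N$, and identify the filtration: the limit $\tilde W$ must be a cylindrical Wiener process for the joint filtration generated by $(u,u',\tilde W)$. The latter holds because independence of increments of $W$ from the past of $(u^n,W)$ passes to the limit.
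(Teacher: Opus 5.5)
Your proposal is correct and follows essentially the same route as the paper. Both arguments use the $g_n$ boundary regularization with amplitude and mode cutoffs, positivity, and uniform energy bounds. Both then keep the state together with the full noise in the compactness step, identify the martingale, bracket and cross-variation to get synchronized weak variational limits, invoke Hellinger pathwise uniqueness, and conclude with Gy\"ongy--Krylov and adaptedness to $\mathcal F^W$.

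The one real deviation is the path space. You work in $C([0,T];H^K)$ and $C([0,T];C(\bar I)^K)$, which requires a uniform time-H\"older version of the factorization estimate that the paper never proves. The paper instead uses $L^2(0,T;H^K)\cap C([0,T];H_A^{-2}(I)^K)$, which needs only the elementary $H^{-2}$ increment bound and Simon's compactness theorem. It then recovers $H^K$- and $C(\bar I)^K$-continuity and the energy bound by applying its estimates directly to the limiting weak solution, rather than through a stronger topology and Fatou.
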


\section{Boundary-linearized approximations}\label{sec:approx}

This section constructs the regularized equations.  For $n\in\N$, let $\eps_n=2^{-n}$,  and for $r\ge0$, define
\begin{equation}\label{eq:gn}
 g_n(r)=\frac{r}{\sqrt{r+\eps_n}},
 \qquad
 p_n(r)=g_n(r)^2=\frac{r^2}{r+\eps_n}.
\end{equation}

The proof of the following lemma is elementary so we omit it.
\begin{lemma}\label{lem:g-properties}
For $r,s,a,b\ge0$,  the following estimates hold:
\begin{align}
 0\le p_n(r)&\le r, \qquad 0\le p'_n(r)\le1,\label{eq:g1}\\
 |g_n(r)-\sqrt r|^2&\le\eps_n,  \qquad
 |g_n(r)-g_n(s)|^2\le|r-s|,\label{eq:g2}\\
 |g_n(a)g_n(b)-\sqrt{ab}|^2&\le2\eps_n(a+b).\label{eq:product-approx}
\end{align}
\end{lemma}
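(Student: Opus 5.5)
The plan is to verify each inequality in \Cref{lem:g-properties} by direct one-variable calculus. The only structural ingredient is the identity $g_n^2=p_n$, combined with monotonicity of $g_n$. Throughout we write $\eps=\eps_n$.

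\emph{Step 1: \eqref{eq:g1}.} Since $r\le r+\eps$, we get $p_n(r)=r\cdot\frac{r}{r+\eps}\le r$, and clearly $p_n\ge0$. Differentiating,
\[
p_n'(r)=\frac{2r(r+\eps)-r^2}{(r+\eps)^2}=\frac{r^2+2r\eps}{(r+\eps)^2}=1-\frac{\eps^2}{(r+\eps)^2},
\]
which lies in $[0,1]$ for $r\ge0$.

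\emph{Step 2: first half of \eqref{eq:g2}.} Write
\[
\sqrt r-g_n(r)=\frac{\sqrt r\,\bigl(\sqrt{r+\eps}-\sqrt r\bigr)}{\sqrt{r+\eps}} .
\]
Since $\sqrt r\le\sqrt{r+\eps}$, this gives $0\le\sqrt r-g_n(r)\le\sqrt{r+\eps}-\sqrt r$. Moreover
\[
\sqrt{r+\eps}-\sqrt r=\frac{\eps}{\sqrt{r+\eps}+\sqrt r}\le\sqrt\eps .
\]
Squaring yields $|g_n(r)-\sqrt r|^2\le\eps$.

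\emph{Step 3: second half of \eqref{eq:g2}.} A naive attempt would compare $g_n$ with $\sqrt{\cdot}$ increment by increment, but $\sqrt r-g_n(r)$ is not monotone for large $r$, so that route fails. Instead, note that
\[
g_n'(r)=\frac{r+2\eps}{2(r+\eps)^{3/2}}>0,
\]
so $g_n$ is nondecreasing. Take $r\ge s\ge0$. Then $0\le g_n(r)-g_n(s)\le g_n(r)+g_n(s)$, and hence
\[
|g_n(r)-g_n(s)|^2\le\bigl(g_n(r)-g_n(s)\bigr)\bigl(g_n(r)+g_n(s)\bigr)=p_n(r)-p_n(s)\le r-s .
\]
The last inequality uses $p_n'\le1$ from Step 1.

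\emph{Step 4: \eqref{eq:product-approx}.} Split the difference as
\[
g_n(a)g_n(b)-\sqrt{ab}=g_n(a)\bigl(g_n(b)-\sqrt b\bigr)+\sqrt b\,\bigl(g_n(a)-\sqrt a\bigr).
\]
Apply $(x+y)^2\le2x^2+2y^2$. For the first term, use $g_n(a)^2=p_n(a)\le a$ from Step 1 and $|g_n(b)-\sqrt b|^2\le\eps$ from Step 2. For the second term, use Step 2 again. Together these give the bound $2a\eps+2b\eps=2\eps(a+b)$.

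The only mildly delicate point is Step 3, and it is handled by the factorization through $p_n$ described there. Everything else is a routine computation.
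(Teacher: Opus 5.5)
Your proof is correct. Each inequality is verified properly, and the key step for the second half of \eqref{eq:g2} is the right device: $|g_n(r)-g_n(s)|^2\le (g_n(r)-g_n(s))(g_n(r)+g_n(s))=p_n(r)-p_n(s)\le r-s$, using monotonicity of $g_n$ and $p_n'\le1$. The paper omits this proof as elementary, and your direct calculus verification fills it in in the natural way.
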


Let $u_0^n=u_0+n^{-1}\mathbf1$.  For nonnegative $z$, define
\begin{equation}\label{eq:Gn-operator}
 G_i^n(z)f_m=\sigma_{im}g_n(z_i)g_n(z_{i+1}).
\end{equation}
Then
\begin{equation}\label{eq:Gn-HS}
 \|G_i^n(z)\|_{L_2(U_i,H)}^2
 =\int_I\lambda_i p_n(z_i)p_n(z_{i+1})\,\dd x
 \le\frac{\|\lambda_i\|_\infty}{2}
 \bigl(\|z_i\|_H^2+\|z_{i+1}\|_H^2\bigr).
\end{equation}
The boundary-regularized equation is
\begin{equation}\label{eq:approx}
\left\{
\begin{aligned}
 \dd u_i^n(t)
 &=\left[Au_i^n(t)+\sum_{j=1}^Kq_{ij}u_j^n(t)+\theta_i\right]\dd t
   +G_i^n(u^n(t))\,\dd W_i(t),\\
 u_i^n(0)&=u_{i,0}+n^{-1}.
\end{aligned}
\right.
\end{equation}

For $L\ge1$, let $\chi_L(r)=(r^+)\wedge L$ and set
\[
 g_{n,L}(r)=g_n(\chi_L(r)).
\]
Let $\Pi_M$ be the projection of $U_i$ onto
$\operatorname{span}\{f_1,\ldots,f_M\}$, with $\Pi_\infty=I$, and define
\begin{equation}\label{eq:GnLM}
 G_i^{n,L,M}(z)f_m
 =\one_{\{m\le M\}}\sigma_{im}g_{n,L}(z_i)g_{n,L}(z_{i+1}).
\end{equation}

The truncated coefficients satisfy the following estimates.

\begin{lemma}\label{lem:Gn-truncated}
For fixed $n\in\N$ and $L\ge1$, uniformly in $M\in\N\cup\{\infty\}$,
\begin{equation}\label{eq:Gn-global-Lip}
 \sum_{i=1}^K\|G_i^{n,L,M}(z)-G_i^{n,L,M}(w)\|_{L_2(U_i,H)}^2
 \le C_{n,L}\|z-w\|_{H^K}^2,
 \qquad z,w\in H^K.
\end{equation}
For every $z\in H^K$,
\begin{equation}\label{eq:GnLM-growth}
 \sum_i\|G_i^{n,L,M}(z)\|_{L_2(U_i,H)}^2\le C\|z\|_{H^K}^2,
\end{equation}
and
\begin{equation}\label{eq:Gn-tail}
 \sum_i\|G_i^{n,L,\infty}(z)(I-\Pi_M)\|_{L_2(U_i,H)}^2
 \le C\rho_M\|z\|_{H^K}^2.
\end{equation}
The constants in \eqref{eq:GnLM-growth}--\eqref{eq:Gn-tail} are independent of $n,L,M$.
\end{lemma}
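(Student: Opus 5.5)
The plan is to reduce all three estimates to pointwise bounds on the scalar products $g_{n,L}(z_i)g_{n,L}(z_{i+1})$. For any $z\in H^K$ and $m\ge1$,
\[
\|G_i^{n,L,M}(z)f_m\|_H^2=\int_I\one_{\{m\le M\}}\sigma_{im}^2\,p_n(\chi_L(z_i))\,p_n(\chi_L(z_{i+1}))\,\dd x .
\]
Summing over $m$ and using $\sum_{m\le M}\sigma_{im}^2\le\lambda_i$ gives
\[
\|G_i^{n,L,M}(z)\|_{L_2(U_i,H)}^2\le\int_I\lambda_i\,p_n(\chi_L(z_i))\,p_n(\chi_L(z_{i+1}))\,\dd x .
\]
This bound is uniform in $M$. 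For the tail \eqref{eq:Gn-tail}, summing only over $m>M$ replaces $\lambda_i$ by $\lambda_i^{>M}$, whose $L^\infty$ norm is at most $\rho_M$.

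Next, \eqref{eq:g1} gives $0\le p_n(\chi_L(r))\le\chi_L(r)\le|r|$. Hence the product is bounded by $|z_i||z_{i+1}|\le\frac12(z_i^2+z_{i+1}^2)$. Integrating and summing over $i$, each coordinate occurs in at most two terms, so
\[
\sum_i\|G_i^{n,L,M}(z)\|_{L_2(U_i,H)}^2\le\Lambda\|z\|_{H^K}^2 .
\]
This is \eqref{eq:GnLM-growth} with $C=\Lambda\le\mathfrak s_0$. The same computation with $\lambda_i^{>M}$ yields \eqref{eq:Gn-tail} with constant $1$. Neither constant depends on $n$, $L$ or $M$.

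For the Lipschitz estimate \eqref{eq:Gn-global-Lip}, write $a=g_{n,L}(z_i)$, $b=g_{n,L}(z_{i+1})$ and $a',b'$ for the same quantities at $w$, and use $ab-a'b'=(a-a')b+a'(b-b')$.

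\emph{Boundedness of the factors.} Since $\chi_L$ takes values in $[0,L]$ and $g_n$ is increasing, $0\le a,b,a',b'\le g_n(L)\le\sqrt L$.

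\emph{Lipschitz continuity of the factors.} The map $r\mapsto g_n(\chi_L(r))$ is globally Lipschitz on $\R$. Indeed, $\chi_L$ is $1$-Lipschitz, and on $[0,L]$
\[
g_n'(r)=\frac{r+2\eps_n}{2(r+\eps_n)^{3/2}}\le\eps_n^{-1/2}.
\]
So $|a-a'|\le\eps_n^{-1/2}|z_i-w_i|$ pointwise, and similarly for $b-b'$.

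\emph{Conclusion.} Combining these,
\[
|ab-a'b'|^2\le 2L\eps_n^{-1}\bigl(|z_i-w_i|^2+|z_{i+1}-w_{i+1}|^2\bigr).
\]
Multiplying by $\sum_{m\le M}\sigma_{im}^2\le\lambda_i\le\Lambda$, integrating, and summing over $i$ gives \eqref{eq:Gn-global-Lip} with $C_{n,L}=4L\Lambda\eps_n^{-1}$, uniform in $M$.

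There is no real obstacle. The only point that needs care is to use the global Lipschitz bound $\eps_n^{-1/2}$ on $g_n$, not the Hölder-type bound \eqref{eq:g2}. The latter only controls $|g_n(r)-g_n(s)|^2$ by $|r-s|$, which is not quadratic in the difference.
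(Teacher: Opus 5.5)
Your proposal is correct and follows essentially the same route as the paper. The Lipschitz estimate \eqref{eq:Gn-global-Lip} comes from the product split together with $\|g_{n,L}\|_\infty\le\sqrt L$ and a finite global Lipschitz constant of $g_{n,L}$, and the bounds \eqref{eq:GnLM-growth}--\eqref{eq:Gn-tail} come from $p_n(\chi_L(r))\le|r|$ integrated against $\lambda_i$ (respectively $\lambda_i^{>M}$). The only difference is that you make the constants explicit, for example $\operatorname{Lip}(g_{n,L})\le\eps_n^{-1/2}$, $C_{n,L}=4L\Lambda\eps_n^{-1}$ and $C=\Lambda$, whereas the paper only records that $\operatorname{Lip}(g_{n,L})$ is finite.
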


\begin{proof}
Define 
\[
 B_{n,L}=\|g_{n,L}\|_{L^\infty(\R)}\le\sqrt L,
 \qquad \ell_{n,L}=\operatorname{Lip}(g_{n,L})<\infty.
\]
For every $a,b,c,d\in\R$, it is easy to see that
\begin{align*}
 |g_{n,L}(a)g_{n,L}(b)-g_{n,L}(c)g_{n,L}(d)|^2
 &\le2B_{n,L}^2\ell_{n,L}^2
 \bigl(|a-c|^2+|b-d|^2\bigr).
\end{align*}
Consequently,
\begin{align*}
 &\|G_i^{n,L,M}(z)-G_i^{n,L,M}(w)\|_{L_2(U_i,H)}^2\\
 &\quad=\int_I\sum_{m\le M}\sigma_{im}^2
 |g_{n,L}(z_i)g_{n,L}(z_{i+1})
   -g_{n,L}(w_i)g_{n,L}(w_{i+1})|^2\,\dd x\\
 &\quad\le2\|\lambda_i\|_\infty B_{n,L}^2\ell_{n,L}^2
 \bigl(\|z_i-w_i\|_H^2+\|z_{i+1}-w_{i+1}\|_H^2\bigr).
\end{align*}
Summing over $i$ proves \eqref{eq:Gn-global-Lip}, uniformly in $M$.

For arbitrary $z\in H^K$,
$g_{n,L}(z_k)^2=p_n(\chi_L(z_k))\le \chi_L(z_k)\le z_k^+\le |z_k|$, and hence
\[
 \|G_i^{n,L,M}(z)\|_{L_2(U_i,H)}^2
 \le\int_I\lambda_i |z_i z_{i+1}|\,\dd x
 \le\frac{\|\lambda_i\|_\infty}{2}
 \bigl(\|z_i\|_H^2+\|z_{i+1}\|_H^2\bigr).
\]
The same calculation with $\lambda_i$ replaced by
$\lambda_i^{>M}$ proves \eqref{eq:Gn-tail}.  Since
$\|\lambda_i^{>M}\|_\infty\le\rho_M$, the constants in the last two estimates are independent of $n,L,M$.
\end{proof}

The following proposition establishes the existence and uniqueness of nonnegative regularized solutions and justifies the removal of the mode cutoff.

\begin{proposition} \label{prop:approx-existence}
For every $n\in\N$, $L\ge1$, and $M\in\N\cup\{\infty\}$, the equation with coefficient $G^{n,L,M}$ has a unique global mild variational solution $u^{n,L,M}$.  It is nonnegative and satisfies, for every $T>0$ and $q\ge1$,
\begin{align}
 &\sup_{n,L,M}\left[
 \E\sup_{t\le T}\|u^{n,L,M}(t)\|_{H^K}^{2q}
 +\E\left(\int_0^T\|u^{n,L,M}(t)\|_{V^K}^2\,\dd t\right)^q
 \right]<\infty,\label{eq:uniform-nLM-energy}\\
 &\sup_{n,L,M}\E\|u^{n,L,M}\|_{C([0,T];C(\overline I)^K)}^q<\infty.\label{eq:uniform-nLM-C}
\end{align}
Moreover, for fixed $n\in\N$, $L\ge1$ and $M\in\N$, the  following holds
\begin{equation}\label{eq:mode-truncation-convergence}
 \E\sup_{t\le T}\|u^{n,L,M}(t)-u^{n,L,\infty}(t)\|_{H^K}^2
 +\E\int_0^T\|u^{n,L,M}-u^{n,L,\infty}\|_{V^K}^2\,\dd t
 \le C_{n,L,T}\rho_M.
\end{equation}
\end{proposition}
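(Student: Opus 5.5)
For fixed $n,L,M$, \Cref{lem:Gn-truncated} makes the noise coefficient globally Lipschitz with linear growth from $H^K$ to $L_2(U_i,H)$. The drift $Au_i+\sum_jq_{ij}u_j+\theta_i$ is linear, bounded from $V^K$ to $(V^*)^K$, and coercive. So the classical variational theory (Krylov--Rozovskii, or Liu--R\"ockner with monotone and coercive coefficients) gives a unique global variational solution $u^{n,L,M}\in L^2(\Omega;C([0,T];H^K))\cap L^2(\Omega\times[0,T];V^K)$. The mild form then follows from semigroup duality as in \Cref{rem:mild}. We note that $G^{n,L,M}$ is defined for all real-valued $z$ through $\chi_L$, so positivity is not needed to set up the equation.

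\textbf{Nonnegativity.} We apply the It\^o formula to $\Phi(u_i)=\int_I (u_i^-)^2\,\dd x$, smoothing $r\mapsto (r^-)^2$ by $C^2$ convex functions and using the variational It\^o formula.
\begin{itemize}
\item The heat term gives $-\int_I|\partial_x u_i^-|^2\le0$.
\item For the noise term, the diffusion coefficient $\sigma_{im}g_{n,L}(u_i)g_{n,L}(u_{i+1})$ vanishes wherever $u_i\le0$, since $\chi_L(u_i)=0$ there. So the It\^o correction $\one_{\{u_i<0\}}|G|^2$ vanishes and the martingale term has zero integrand.
\item For the drift, $-2\int u_i^-(q_{ii}u_i+\sum_{j\ne i}q_{ij}u_j+\theta_i)$ is bounded by $2\mathfrak q_0\|u_i^-\|^2+2\sum_{j\ne i}\int q_{ij}u_i^-u_j^-$, because $q_{ij}\ge0$, $\theta_i\ge0$, and $-u_i^-u_j\le u_i^-u_j^-$.
\end{itemize}
Summing over $i$ and applying Gronwall to $\E\sum_i\|u_i^-(t)\|^2$, which starts at zero since $u_{i,0}+n^{-1}>0$, gives $u^{n,L,M}\ge0$.

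\textbf{Uniform energy bound \eqref{eq:uniform-nLM-energy}.} We apply It\^o to $\|u\|_{H^K}^2$. This gives $-\|\partial_x u\|^2$ from $A$, at most $C(\mathfrak q_0)\|u\|^2+\mathfrak t_0^2+\|u\|^2$ from drift and immigration, and at most $C\|u\|^2$ from the It\^o correction by \eqref{eq:GnLM-growth}, with constants independent of $n,L,M$. We then take the $q$-th power, handle $\sup_t$ of the martingale by BDG using \eqref{eq:GnLM-growth} together with Young's inequality, and close with Gronwall. The initial data satisfy $\|u_0+n^{-1}\|\le\|u_0\|+1$, which is uniform in $n$.

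\textbf{Continuous-path bound \eqref{eq:uniform-nLM-C}.} We use the mild form.
\begin{itemize}
\item $P_tu^n_{0}$ is bounded in $C([0,T];C(\bar I))$ because $u_0\in V\hookrightarrow C(\bar I)$ and $P_t$ is a strongly continuous contraction on $V$.
\item The drift convolution maps $L^\infty(0,T;H)$ into $C([0,T];H^{\alpha})$ for any $\alpha<2$.
\item For the stochastic convolution we use the Da Prato--Kwapie\'n--Zabczyk factorization with exponent $a\in(0,1/2)$. By \eqref{eq:GnLM-growth}, $\|P_{t-s}G\|_{L_2(U,H^\alpha)}\le C(t-s)^{-\alpha/2}\|u(s)\|_H$. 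Choosing $1/2<\alpha<1$ and $a$ with $\alpha/2+a<1/2$, and using the uniform $H$-moments for large $q$, gives uniform moments in $C([0,T];H^\alpha)\subset C([0,T];C(\bar I))$.
\end{itemize}
Every bound uses only \eqref{eq:GnLM-growth}, so it is independent of $n,L,M$.

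\textbf{Mode-cutoff estimate \eqref{eq:mode-truncation-convergence}.} Set $w=u^{n,L,M}-u^{n,L,\infty}$ and apply It\^o to $\|w\|_{H^K}^2$. The noise difference splits as $[G^{n,L,M}(u^M)-G^{n,L,M}(u^\infty)]+[G^{n,L,\infty}(u^\infty)(\Pi_M-I)]$.
\begin{itemize}
\item The first piece is controlled by $C_{n,L}\|w\|^2$ through \eqref{eq:Gn-global-Lip}.
\item The second piece is controlled by $C\rho_M\|u^\infty\|^2$ through \eqref{eq:Gn-tail}, whose expectation is bounded by the energy estimate.
\end{itemize}
BDG for the sup and Gronwall then give the bound $C_{n,L,T}\rho_M$, with the $V$-norm coming from the dissipation term.

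\textbf{Main obstacle.} The delicate step is the positivity argument: the It\^o formula for the non-$C^2$ functional $\|u^-\|^2$ in the variational setting, and verifying that the correction and martingale terms vanish exactly. If this is handled via the approximation $\psi_\epsilon$ of $(r^-)^2$, one must check that $\psi_\epsilon''(u_i)|G_i|^2\to0$ in $L^1$. This holds because $|G_i|^2$ vanishes on $\{u_i\le0\}$ and is bounded by $C\chi_L(u_i)\le Cu_i^+$, so on the support of $\psi_\epsilon''$ it is $O(\epsilon)$.
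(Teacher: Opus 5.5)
Your proposal is correct and follows the paper's proof essentially step for step. It uses the smoothed $(r^-)^2$ functional with the vanishing $i$th noise row for positivity, It\^o--BDG--Gronwall for the cutoff-independent moments, factorization plus the mild form for the $C(\overline I)^K$ paths, and the same split via $G^{n,L,M}(z)=G^{n,L,\infty}(z)\Pi_M$ for the mode cutoff.

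The differences are minor. You obtain well-posedness directly from the monotone--coercive variational theory, whereas the paper uses the mild fixed point followed by identification. The paper also chooses its smoothing with $\phi_\delta''=0$ on $[0,\infty)$, plus a left cutoff $\phi_{\delta,N}$ with bounded first derivative as its It\^o lemma requires. With that choice the trace and martingale terms vanish identically rather than being $O(\epsilon)$.
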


\begin{proof}
\emph{Step 1: globally Lipschitz equations.}
Let $\mathbb H=H^K$, $\mathbb V=V^K$, and let
$\mathbb A=\operatorname{diag}(A,\ldots,A)$. For fixed $n\in\N$, $L\ge1$, and $M\in\N\cup\{\infty\}$, the map
$G^{n,L,M}:\mathbb H\to L_2(\mathbb U,\mathbb H)$ is globally Lipschitz by
\eqref{eq:Gn-global-Lip}, while $\Qop$ is bounded on $\mathbb H$ and
\[
 2\,{}_{\mathbb V^*}\!\ip{\mathbb A z}{z}_{\mathbb V}
 =-\sum_{i=1}^K\|\partial_xz_i\|_H^2.
\]
The standard fixed-point theorem for stochastic evolution equations therefore gives a unique global mild $\mathbb H$-valued solution; see \cite[Chapter~7]{DaPratoZabczyk2014}.  Applying the Hilbert-space energy formula to its stopping-time localizations and using \eqref{eq:GnLM-growth} gives
\[
 \E\sup_{t\le T}\|u^{n,L,M}(t)\|_{\mathbb H}^2
 +\E\int_0^T\|u^{n,L,M}(t)\|_{\mathbb V}^2\,\dd t<\infty.
\]
The variational continuity theorem \cite[Chapter~4]{PrevotRockner2007} then identifies the mild solution with the unique variational solution.  This argument is valid also for $M=\infty$, because \eqref{eq:Gn-global-Lip} is an estimate in the full Hilbert--Schmidt space.

\emph{Step 2: invariance of the positive cone.}

The definition through $\chi_L$ already extends $g_{n,L}$ by zero on $(-\infty,0]$.  Choose convex functions $\phi_\delta\in C^2(\R)$, vanishing on $[0,\infty)$, such that, as $\delta\downarrow0$,
\[
 \begin{gathered}
 0\le\phi_\delta(r)\uparrow(r^-)^2,
 \qquad \phi_\delta'(r)\le0,
 \qquad |\phi_\delta'(r)|\le2r^-,\\
 0\le\phi_\delta''(r)\le2,
 \qquad \phi_\delta''(r)=0\ \text{for }r\ge0.
 \end{gathered}
\]
For fixed $\delta$ and $N$, choose a convex $\phi_{\delta,N}\in C^2(\R)$ which agrees with $\phi_\delta$ on $[-N,\infty)$, is affine on $(-\infty,-N-1]$, has uniformly continuous second derivative, and satisfies
\[
 \begin{gathered}
 0\le\phi_{\delta,N}(r)\le(r^-)^2,
 \qquad |\phi_{\delta,N}'(r)|\le2r^-,\\
 0\le\phi_{\delta,N}''(r)\le2,
 \qquad \phi_{\delta,N}''(r)=0\ \text{for }r\ge0.
 \end{gathered}
\]
Such functions are obtained by cutting off the curvature $\phi_\delta''$ smoothly to the left of $-N$ and integrating twice from the origin.  For each fixed $N$, the first two derivatives are bounded, so the variational It\^o formula applies directly to
$\sum_i\int_I\phi_{\delta,N}(u_i^{n,L,M})\,\dd x$.  The stochastic integral and its trace are identically zero, because $\phi_{\delta,N}'$ and $\phi_{\delta,N}''$ vanish on $[0,\infty)$ whereas the $i$th noise row vanishes when its $i$th state coordinate is negative.  Letting $N\to\infty$ is justified by the displayed bounds, the Step~1 energy estimate, dominated convergence for the endpoint and lower-order terms, and dominated convergence for the heat term against $|\partial_xu_i^{n,L,M}|^2$.  We thereby obtain the identity with $\phi_\delta$ used below.

The Neumann heat contribution is
\[
 -\frac12\sum_i\int_I\phi_\delta''(u_i^{n,L,M})
 |\partial_xu_i^{n,L,M}|^2\,\dd x\le0.
\]
The complete trace contribution is zero.  Indeed, on
$\{\phi_\delta''(u_i^{n,L,M})\ne0\}$ one has $u_i^{n,L,M}<0$, so
$g_{n,L}(u_i^{n,L,M})=0$ and the entire $i$th noise row vanishes, for every mode.  Since $\theta_i\ge0$ and $\phi_\delta'\le0$, immigration is nonpositive.  For $i\ne j$, the Metzler condition and
(suppressing the superscripts $n,L,M$ in the following two pointwise bounds)
$u_j=u_j^+-u_j^-$ give
\[
 \phi_\delta'(u_i)q_{ij}u_j
 \le2q_{ij}u_i^-u_j^-
 \le q_{ij}\bigl((u_i^-)^2+(u_j^-)^2\bigr),
\]
whereas
$\phi_\delta'(u_i)q_{ii}u_i\le2q_{ii}^+(u_i^-)^2$.
Taking expectations, sending $\delta\downarrow0$, and using dominated convergence in the bounded drift terms yields
\[
 \sum_i\E\|(u_i^{n,L,M}(t))^-\|_H^2
 \le C\int_0^t\sum_i\E\|(u_i^{n,L,M}(s))^-\|_H^2\,\dd s.
\]
The initial state is nonnegative, so Gronwall's lemma proves positivity.  

\emph{Step 3: cutoff-independent high moments and continuous spatial paths.}
Write
\[
 Y(t)=1+\|u^{n,L,M}(t)\|_{H^K}^2,
 \qquad D(t)=\sum_i\|\partial_xu_i^{n,L,M}(t)\|_H^2.
\]
The matrix, immigration, and trace bounds similar to those used in
\eqref{eq:matrix-energy-bound}--\eqref{eq:trace-energy-bound} below give, after localization,
\begin{equation}\label{eq:nLM-basic-energy}
 \dd Y(t)+D(t)\,\dd t\le C Y(t)\,\dd t+\dd M(t),
 \qquad \dd\langle M\rangle_t\le C Y(t)^2\,\dd t,
\end{equation}
with $C$ independent of $n,L,M$.  For every $q\ge1$, It\^o's formula for $Y^q$, followed by BDG and Young, gives
\begin{equation}\label{eq:nLM-Yq}
 \E\sup_{t\le T}Y(t)^q
 +\E\int_0^T Y(t)^{q-1}D(t)\,\dd t
 \le C_{q,T}Y(0)^q.
\end{equation}
Integrating \eqref{eq:nLM-basic-energy}, raising to the $q$th power, and using
\[
 \E\sup_{t\le T}|M(t)|^q
 \le C_q\E\left(\int_0^TY(s)^2\,\dd s\right)^{q/2}
 \le C_{q,T}\E\sup_{s\le T}Y(s)^q
\]
shows
\[
 \E\left(\int_0^TD(s)\,\dd s\right)^q\le C_{q,T}Y(0)^q.
\]
Since $\int_0^T\|u\|_V^2\le\int_0^T(Y+D)$, this proves
\eqref{eq:uniform-nLM-energy} with all constants independent of the cutoffs.

For the $C(\overline I)$-valued paths, use the mild equation.  For
$\alpha\in(1/2,1)$,
\begin{equation}\label{eq:semigroup-Halpha}
 \|(I-A)^{\alpha/2}P_rB\|_{L_2(U,H)}^2
 \le C_\alpha(1+r^{-\alpha})\|B\|_{L_2(U,H)}^2
,\qquad r>0.
\end{equation}
On a fixed horizon $0<r\le T$, this is bounded by $C_{\alpha,T}r^{-\alpha}\|B\|_{L_2(U,H)}^2$, which is the form used in the convolution estimates below.
Choose $q>2/(1-\alpha)$.  The stochastic factorization 
\Cref{lem:factorization}, \eqref{eq:GnLM-growth}, and
\eqref{eq:nLM-Yq} give a continuous $H^\alpha$-valued stochastic convolution and the corresponding cutoff-independent moment bound.  For an arbitrary target moment $r\ge1$, one first chooses such a $q\ge r$ and then uses Lyapunov's inequality to obtain the $r$th-moment estimate.  If
$F=\Qop u^{n,L,M}+\theta$, then
$F\in L^2(0,T;H^K)$ and
\begin{align}\label{eq:nLM-deterministic-convolution}
 \sup_{t\le T}\left\|\int_0^tP_{t-s}F(s)\,\dd s\right\|_{H^\alpha}^2
 &\le C\sup_{t\le T}\left(\int_0^t(t-s)^{-\alpha/2}\|F(s)\|_H\,\dd s\right)^2\notag\\
 &\le C_{\alpha,T}\int_0^T\|F(s)\|_H^2\,\dd s,
\end{align}
where $\alpha<1$ is used in the last step.  The same estimate applied to increments, together with strong continuity of $P_t$ on $H^\alpha$, gives temporal continuity.  Finally, $u_0^n$ is uniformly bounded in $H^1$, and
$H^\alpha(I)\hookrightarrow C(\overline I)$.  This proves
\eqref{eq:uniform-nLM-C} for finite and infinite $M$.

\emph{Step 4: removal of the mode cutoff.}
Let $U^M=u^{n,L,M}$, $U=u^{n,L,\infty}$, and couple them with the same cylindrical Wiener process.  Since
$G^{n,L,M}(z)=G^{n,L,\infty}(z)\Pi_M$, the exact decomposition is
\begin{align}\label{eq:mode-noise-split}
 G^{n,L,M}(U^M)-G^{n,L,\infty}(U)
 &=\bigl[G^{n,L,M}(U^M)-G^{n,L,M}(U)\bigr]
\notag\\
 &\quad-G^{n,L,\infty}(U)(I-\Pi_M).
\end{align}
Apply It\^o's formula to $\|U^M-U\|_{H^K}^2$.  The first term in
\eqref{eq:mode-noise-split} is bounded by
$C_{n,L}\|U^M-U\|_{H^K}^2$ through
\eqref{eq:Gn-global-Lip}; the second satisfies
\[
 \E\int_0^T\|G^{n,L,\infty}(U)(I-\Pi_M)\|_{L_2(\mathbb U,H^K)}^2\,\dd s
 \le C\rho_M\E\int_0^T\|U(s)\|_{H^K}^2\,\dd s.
\]
The heat term contributes
$-\sum_i\|\partial_x(U_i^M-U_i)\|_H^2$.  BDG and Young absorb one half of the supremum and gradient terms, and Gronwall gives
\[
 \E\sup_{t\le T}\|U^M(t)-U(t)\|_{H^K}^2
 +\E\int_0^T\|U^M-U\|_{V^K}^2\,\dd s
 \le C_{n,L,T}\rho_M.
\]
This is \eqref{eq:mode-truncation-convergence}.  Positivity of the full-mode solution follows either from Step~2 directly or by taking an almost-everywhere convergent subsequence of the finite-mode solutions, and its $C(\overline I)^K$-path regularity was proved directly in Step~3.
\end{proof}

The following proposition removes the amplitude cutoff and establishes global existence and uniqueness for the regularized equation.

\begin{proposition}\label{prop:remove-amplitude}
For every $n\in\N$ there is a unique global nonnegative solution $u^n$ of \eqref{eq:approx}, adapted to the completed natural filtration of the prescribed $W$, such that
\begin{equation}\label{eq:un-C-continuity}
 u^n\in L^q\bigl(\Omega;C([0,T];C(\overline I)^K)\bigr)
 \cap L^2\bigl(\Omega\times[0,T];V^K\bigr)
\end{equation}
for every finite $q$ and $T$.  The estimates \eqref{eq:uniform-nLM-energy} and \eqref{eq:uniform-nLM-C} remain valid for $u^n$, uniformly in $n$.
\end{proposition}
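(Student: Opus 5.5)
Fix $n$ and work with the full-mode solutions $u^{L}:=u^{n,L,\infty}$ from \Cref{prop:approx-existence}, all on the prescribed basis and driven by the same $W$. Each $u^L$ is the unique solution of a globally Lipschitz equation, obtained by a Picard fixed point on the prescribed basis. Hence it is adapted to the completed natural filtration $(\cF_t^W)$.

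Define the stopping times
\[
\tau_L=\inf\Bigl\{t\ge0:\ \max_i\|u_i^{L}(t)\|_{C(\overline I)}\ge L\Bigr\}\wedge T .
\]
These are well defined because the paths are continuous in $C(\overline I)^K$ by Step~3 of \Cref{prop:approx-existence}. On $[0,\tau_L]$ we have $0\le u^L\le L$ by positivity, so $\chi_L(u_i^L)=u_i^L$. Therefore $G^{n,L,\infty}(u^L)=G^{n,L',\infty}(u^L)=G^n(u^L)$ there for every $L'\ge L$.

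\textbf{Consistency.} For $L\le L'$, apply It\^o's formula to $\|u^{L}(t\wedge\tau_L)-u^{L'}(t\wedge\tau_L)\|_{H^K}^2$. On $[0,\tau_L]$ both processes solve the equation with the same coefficient $G^{n,L',\infty}$, which is globally Lipschitz by \eqref{eq:Gn-global-Lip}. The heat term is dissipative and $\Qop$ is bounded, so Gronwall gives $u^L=u^{L'}$ on $[0,\tau_L]$ almost surely. In particular $\tau_L\le\tau_{L'}$, and we may define $u^n:=u^L$ on $[0,\tau_L]$.

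\textbf{Nonexplosion.} By Chebyshev's inequality and the cutoff-independent bound \eqref{eq:uniform-nLM-C},
\[
\Prob(\tau_L<T)\le\Prob\bigl(\|u^L\|_{C([0,T];C(\overline I)^K)}\ge L\bigr)\le C/L\to0 .
\]
Hence $\tau_L\uparrow T$ almost surely, eventually equal to $T$, and $u^n$ is defined on all of $[0,T]$. Since $T$ is arbitrary, consistency across horizons gives a global solution.

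\textbf{Properties of $u^n$.}
\begin{itemize}
\item It solves \eqref{eq:approx}: on each $[0,\tau_L]$ the coefficient equals $G^n$, and the variational identity passes to the limit by stopping at $\tau_L$.
\item It is nonnegative and adapted to $(\cF^W_t)$, since each $u^L$ is.
\item The moment bounds \eqref{eq:uniform-nLM-energy} and \eqref{eq:uniform-nLM-C} transfer by Fatou's lemma. We have $u^n\one_{\{t\le\tau_L\}}=u^L\one_{\{t\le\tau_L\}}$, and the $u^L$ bounds are uniform in $n$ and $L$. This yields \eqref{eq:un-C-continuity} and the uniformity in $n$.
\end{itemize}

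\textbf{Uniqueness.} Let $v$ be another nonnegative solution of \eqref{eq:approx} in the class \eqref{eq:un-C-continuity}. Set $\sigma_L=\inf\{t:\ \|v(t)\|_{C(\overline I)^K}\ge L\}\wedge\tau_L$. Up to $\sigma_L$, both $v$ and $u^n$ solve the $L$-truncated Lipschitz equation, so the same It\^o/Gronwall argument gives $v=u^n$ on $[0,\sigma_L]$. Since $\sigma_L\uparrow T$ by path continuity, $v=u^n$ on $[0,T]$.

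\textbf{Main obstacle.} The delicate point is that the stopping argument requires the supremum norm and not merely the $H$-norm. $G^{n,L}$ agrees with $G^n$ only when the pointwise values lie in $[0,L]$, so the truncation is removed exactly where $u\le L$ pointwise. The argument therefore relies on the $C(\overline I)$ path continuity and on the uniform-in-$L$ bound \eqref{eq:uniform-nLM-C}, both established in Step~3 of \Cref{prop:approx-existence}.
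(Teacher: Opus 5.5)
Your proposal is correct and follows essentially the paper's route: localization at the $C(\overline I)^K$ exit times, consistency of the amplitude levels through the globally Lipschitz difference estimate, nonexplosion from the cutoff-independent bound \eqref{eq:uniform-nLM-C}, patching with Fatou, and localized uniqueness. The differences are minor. You use the monotonicity $\tau_L\le\tau_{L'}$ where the paper uses Borel--Cantelli along dyadic levels. You also prove uniqueness only among solutions already in the class \eqref{eq:un-C-continuity}, which is enough for the statement as written, whereas the paper first gives any solution in $L^2(\Omega;C([0,T];H^K))\cap L^2(\Omega\times[0,T];V^K)$ a continuous $C(\overline I)^K$-valued version via the energy estimate and factorization, and so obtains uniqueness in that larger class.
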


\begin{proof}
Set
\[
 L_0=2+\max_{1\le i\le K}\|u_{i,0}\|_{C(\overline I)}
\]
and write $u^{n,L}=u^{n,L,\infty}$ for $L\ge L_0$.  For two levels
$L_0\le L'<L$, define their common localization time
\[
 \sigma_{L'}^{n;L',L}
 =\inf\left\{t\ge0:
 \max_i\bigl(\|u_i^{n,L'}(t)\|_C\vee\|u_i^{n,L}(t)\|_C\bigr)
 \ge L'\right\}.
\]
On $[0,\sigma_{L'}^{n;L',L}]$ both state vectors take values in
$[0,L']^K$, where $g_{n,L'}=g_{n,L}$.  Their stopped difference therefore solves the same globally Lipschitz equation with the same initial state and noise.  The difference estimate used in Step~4 of
\Cref{prop:approx-existence} gives
\[
 u^{n,L'}(t)=u^{n,L}(t),
 \qquad 0\le t\le\sigma_{L'}^{n;L',L},\quad\text{a.s.}
\]
In particular, the two first exit times from the $C$-ball of radius $L'$ coincide.

For each $L$ set
\[
 \tau_L^n=
 \inf\left\{t\ge0:\max_i\|u_i^{n,L}(t)\|_{C(\overline I)}\ge L\right\}.
\]
By \eqref{eq:uniform-nLM-C}, for every $q\ge1$,
\begin{equation}\label{eq:amplitude-exit-probability}
 \sup_n\Prob(\tau_L^n\le T)
 \le L^{-q}\sup_n\E\|u^{n,L}\|_{C([0,T];C(\overline I)^K)}^q
 \le C_{q,T}L^{-q}.
\end{equation}
Take the dyadic levels $L_k=2^kL_0$ and choose $q>1$.  Then
$\sum_k\sup_n\Prob(\tau_{L_k}^n\le T)<\infty$, and Borel--Cantelli shows, for every fixed $n$ and $T$, that
$\tau_{L_k}^n>T$ for all sufficiently large $k$, almost surely.  Since
$T\in\N$ is arbitrary, $\tau_{L_k}^n\uparrow\infty$ almost surely.

The consistency just proved permits the pathwise definition
\[
 u^n(t)=u^{n,L_k}(t)\quad\text{whenever }t<\tau_{L_k}^n.
\]
It is independent of the chosen sufficiently large $k$, is nonnegative, has continuous $C(\overline I)^K$-valued paths, and solves
\eqref{eq:approx} on every bounded time interval.  Fatou's lemma applied along $k\to\infty$ gives both the variational and the $C$-path moment estimates, with the same constants as in
\eqref{eq:uniform-nLM-energy}--\eqref{eq:uniform-nLM-C}.

If $v^n$ is another nonnegative variational (equivalently, mild) solution of \eqref{eq:approx}, driven by the same $W$ and lying in
\[
 L^2\bigl(\Omega;C([0,T];H^K)\bigr)
 \cap L^2\bigl(\Omega\times[0,T];V^K\bigr),
\]
the regularized energy estimate and the factorization and deterministic-convolution argument of Step~3 first give it a continuous $C(\overline I)^K$-valued version.  We may therefore stop both processes when either $C$-norm reaches $L$.  Up to that time both solve the same globally Lipschitz $L$-truncated equation, so they agree.  Letting $L\to\infty$ proves uniqueness in this variational energy class.  Every $u^{n,L}$ is adapted to the completed natural filtration of $W$, and the patched process is therefore adapted to that filtration as well.
\end{proof}

\section{Uniform a priori estimates}\label{sec:apriori}

We establish estimates that are uniform in the boundary regularization.  The energy calculation also applies directly to every WV solution of the limiting equation, a fact used below rather than inferred from lower semicontinuity.

\begin{proposition}[Uniform variational energy estimate]\label{prop:energy-estimate}
For every $T>0$ and $q\ge1$, the regularized solutions satisfy
\begin{equation}\label{eq:uniform-energy}
 \sup_n\left[
 \E\sup_{t\le T}\|u^n(t)\|_{H^K}^{2q}
 +\E\left(\int_0^T\|u^n(t)\|_{V^K}^2\dd t\right)^q
 \right]
 \le C_{q,T}\bigl(1+\|u_0\|_{H^K}^{2q}\bigr).
\end{equation}
Every WV solution $u$ of \eqref{eq:spde} satisfies the same estimate with $u^n$ replaced by $u$.
\end{proposition}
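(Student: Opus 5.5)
The first assertion needs no new argument. By \Cref{prop:remove-amplitude}, the bounds \eqref{eq:uniform-nLM-energy} hold for $u^n$ uniformly in $n$. Moreover, the proof of \eqref{eq:nLM-Yq} already gives the explicit dependence $C_{q,T}Y(0)^q$ with $Y(0)=1+\|u_0^n\|_{H^K}^2\le 1+2\|u_0\|_{H^K}^2+2K$, because $\|n^{-1}\one\|_H\le1$. Hence $Y(0)^q\le C_q(1+\|u_0\|_{H^K}^{2q})$, and Fatou's lemma along the amplitude levels $L_k$ preserves these constants. So I would simply record that the constants in \eqref{eq:nLM-basic-energy} come from \eqref{eq:Gn-HS}, which depends only on $\Lambda$, $\mathfrak q_0$, $\mathfrak t_0$ and $K$.

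The content lies in the second assertion, for an arbitrary WV solution $u$ of \eqref{eq:spde}. I would apply the variational It\^o formula \cite[Chapter~4]{PrevotRockner2007} to $\|u_i(t)\|_H^2$ in the Gelfand triple $V\subset H\subset V^*$. This is legitimate because \eqref{eq:solution-space} gives $u_i\in L^2(\Omega\times[0,T];V)$. The drift $Au_i+\sum_jq_{ij}u_j+\theta_i$ lies in $L^2(0,T;V^*)$, and the noise $G_i(u)$ lies in $L^2(\Omega\times[0,T];L_2(U_i,H))$ by \eqref{eq:G-HS}. Summing over $i$ yields
\[
 \dd\|u\|_{H^K}^2=\Bigl[-\sum_i\|\partial_xu_i\|_H^2+2\sum_i\ip{(\Qop u)_i+\theta_i}{u_i}+\sum_i\int_I\lambda_iu_iu_{i+1}\,\dd x\Bigr]\dd t+\dd M(t),
\]
with $M(t)=2\sum_i\int_0^t\ip{G_i(u)^*u_i}{\dd W_i}_{U_i}$. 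The matrix term is at most $C\mathfrak q_0\|u\|_{H^K}^2$. Immigration is at most $\mathfrak t_0^2+\|u\|_{H^K}^2$, and the trace is at most $\Lambda\|u\|_{H^K}^2$ by \eqref{eq:G-HS}. For the martingale, $\dd\langle M\rangle_t\le 4\sum_i\|G_i(u)\|_{L_2}^2\|u_i\|_H^2\,\dd t\le C\|u\|_{H^K}^4\,\dd t$. This reproduces \eqref{eq:nLM-basic-energy} for $Y=1+\|u\|_{H^K}^2$ and $D=\sum_i\|\partial_xu_i\|_H^2$.

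From there I would follow Step~3 of \Cref{prop:approx-existence}. Localize with $\tau_R=\inf\{t:\|u(t)\|_{H^K}\ge R\}\wedge T$, which tends to $T$ by path continuity in $H^K$. Apply It\^o's formula to $Y^q$, noting that the second-order term $q(q-1)Y^{q-2}\dd\langle M\rangle\le CY^q\,\dd t$. Use BDG on $\int Y^{q-1}\dd M$, bounding it by $\frac12\E\sup Y^q+C\E\int Y^q$, absorb, and apply Gronwall to $\E\sup_{s\le t\wedge\tau_R}Y^q$. Letting $R\to\infty$ by monotone convergence then gives $\E\sup Y^q\le C_{q,T}Y(0)^q$. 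The gradient moment follows by integrating the differential inequality, raising it to the $q$th power, and bounding $\E\sup|M|^q$ through BDG by $\E\sup Y^q$. Since $\|u\|_V^2\le Y+D$, this completes the estimate.

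The only delicate point is the localization. Before stopping, $\E\sup Y^q$ is not known to be finite for $q>1$, since the solution class only provides second moments. The stopped quantities are bounded by $R^{2q}$, so Gronwall is applied to finite functions, and Fatou's lemma removes the stopping. No positivity is needed beyond the inequality $u_iu_{i+1}\le\frac12(u_i^2+u_{i+1}^2)$.
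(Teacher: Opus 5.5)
Your proposal is correct and is essentially the paper's own argument. Both apply It\^o's formula to $\|u\|_{H^K}^2$ with the same matrix, immigration, and trace bounds via \eqref{eq:G-HS}, stop at a deterministic $H^K$-level, use It\^o for $Y^q$ with BDG, Young, and Gronwall, derive the gradient moment from the unpowered inequality, and then repeat the computation verbatim for an arbitrary WV solution. The only cosmetic difference is that for $u^n$ you inherit the bound from \Cref{prop:remove-amplitude} and \eqref{eq:nLM-Yq}, tracking $Y(0)^q\le C_q(1+\|u_0\|_{H^K}^{2q})$, instead of redoing the It\^o computation directly on \eqref{eq:approx}; this is equally valid.
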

\begin{proof}
We first take $q=1$.  Applying It\^o's formula to $\sum_i\|u_i^n(t)\|_H^2$ gives
\begin{align}\label{eq:energy-ito}
 \dd\|u^n\|_{H^K}^2
 +\sum_{i=1}^K\|\partial_xu_i^n\|_H^2\dd t
 &=2\sum_{i=1}^K\ip{u_i^n}{\sum_{j=1}^Kq_{ij}u_j^n+\theta_i}\dd t\notag\\
 &\quad+\sum_{i=1}^K\sum_{m=1}^{\infty}
 \|\sigma_{im}g_n(u_i^n)g_n(u_{i+1}^n)\|_H^2\dd t
 +\dd M_t^n,
\end{align}
where $M^n$ is a continuous local martingale.

For the matrix drift, boundedness of the coefficients  gives
\begin{align}\label{eq:matrix-energy-bound}
 2\sum_{i,j}\ip{u_i^n}{q_{ij}u_j^n}
 &\le2\sum_{i,j}\|q_{ij}\|_\infty
 \int_I|u_i^n u_j^n|\,\dd x
 \le C_{\Qmat}\|u^n\|_{H^K}^2.
\end{align}
The immigration term satisfies
\[
 2\sum_i\ip{u_i^n}{\theta_i}
 \le C_\theta\bigl(1+\|u^n\|_{H^K}^2\bigr).
\]
For the trace, $p_n(r)\le r$ yields
\begin{align}\label{eq:trace-energy-bound}
 \sum_m\|\sigma_{im}g_n(u_i^n)g_n(u_{i+1}^n)\|_H^2
 &=\int_I\lambda_i p_n(u_i^n)p_n(u_{i+1}^n)\dd x\notag\\
 &\le\|\lambda_i\|_\infty\int_Iu_i^nu_{i+1}^n\dd x\notag\\
 &\le\frac{\|\lambda_i\|_\infty}{2}
 \bigl(\|u_i^n\|_H^2+\|u_{i+1}^n\|_H^2\bigr).
\end{align}

Stop when $\|u^n\|_{H^K}$ reaches a deterministic level.  The quadratic variation of $M^n$ is controlled by a constant times
\[
 \int_0^t\|u^n(s)\|_{H^K}^2
 \sum_i\|G_i^n(u^n(s))\|_{L_2(U_i,H)}^2\dd s,
\]
which is bounded using \eqref{eq:trace-energy-bound}.  The Burkholder--Davis--Gundy and Young inequalities therefore give
\[
 \E\sup_{s\le t}|M_s^n|
 \le\frac14\E\sup_{s\le t}\|u^n(s)\|_{H^K}^2
 +C\E\int_0^t\bigl(1+\|u^n(s)\|_{H^K}^2\bigr)\dd s.
\]
Gronwall's lemma and removal of the stopping prove the estimate for $q=1$.

For the higher moments put $Y^n=1+\|u^n\|_{H^K}^2$ and
$D^n=\sum_i\|\partial_xu_i^n\|_H^2$.  The preceding estimates give, after localization,
\[
 \dd Y^n+D^n\,\dd t\le CY^n\,\dd t+\dd M^n,
 \qquad \dd\langle M^n\rangle_t\le C(Y^n)^2\,\dd t.
\]
Because $Y^n\ge1$, It\^o's formula applies to $(Y^n)^q$ for every real
$q\ge1$.  The finite-variation terms satisfy
\[
 \dd(Y^n)^q+q(Y^n)^{q-1}D^n\,\dd t
 \le C_q(Y^n)^q\,\dd t
 +q(Y^n)^{q-1}\,\dd M^n.
\]
BDG and Young yield
\begin{align}\label{eq:energy-Yq-detail}
 \E\sup_{t\le T}(Y^n(t))^q
 +\E\int_0^T(Y^n)^{q-1}D^n\,\dd t
 \le C_{q,T}(Y^n(0))^q.
\end{align}
Returning to the unpowered energy inequality,
\[
 \int_0^TD^n\,\dd t
 \le Y^n(0)+C\int_0^TY^n\,\dd t+
 \sup_{t\le T}|M^n(t)|,
\]
and hence, using
\[
 \E\sup_{t\le T}|M^n(t)|^q
 \le C_q\E\left(\int_0^T(Y^n)^2\,\dd t\right)^{q/2}
 \le C_{q,T}\E\sup_{t\le T}(Y^n(t))^q,
\]
we obtain
\[
 \E\left(\int_0^TD^n(s)\,\dd s\right)^q
 \le C_{q,T}(Y^n(0))^q.
\]
Together with \eqref{eq:energy-Yq-detail} and
$\|u^n\|_{V^K}^2\le\|u^n\|_{H^K}^2+D^n$, this proves the full estimate
\eqref{eq:uniform-energy}.  All constants have the dependence specified in
\eqref{eq:coefficient-norms} and are independent of every approximation parameter.

For an arbitrary WV solution, apply the Hilbert-space variational It\^o formula to the same stopped functionals.  The drift bounds are unchanged and
\[
 \|G_i(u)\|_{L_2(U_i,H)}^2
 =\int_I\lambda_i u_i u_{i+1}\,\dd x
 \le\frac{\|\lambda_i\|_\infty}{2}
 (\|u_i\|_H^2+\|u_{i+1}\|_H^2).
\]
The same BDG--Gronwall argument, followed by removal of the stopping time, proves the estimate directly in the weak solution class.
\end{proof}

\begin{proposition}[Total-mass moments]\label{prop:mass}
Let
\[
 \mathfrak M^n(t)=\sum_{i=1}^K\int_Iu_i^n(t,x)\dd x.
\]
Then, for every $T>0$ and $q\ge1$,
\begin{equation}\label{eq:mass-moments}
 \sup_n\E\sup_{t\le T}|\mathfrak M^n(t)|^{2q}<\infty.
\end{equation}
The same estimate holds for every WV solution.
\end{proposition}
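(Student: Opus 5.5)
Test the variational identity \eqref{eq:variational} (for $u^n$ the analogous identity for \eqref{eq:approx}) against $\varphi=\mathbf 1\in V$ and sum over $i$. Since $\partial_x\mathbf 1=0$, the Neumann heat term vanishes identically, and we obtain
\[
 \mathfrak M^n(t)=\mathfrak M^n(0)+\int_0^t\sum_{i}\Bigl\langle\sum_jq_{ij}u_j^n+\theta_i,\mathbf 1\Bigr\rangle\dd s+N^n(t),
\qquad
 N^n(t)=\sum_i\int_0^t\ip{G_i^n(u^n(s))^*\mathbf 1}{\dd W_i(s)}_{U_i}.
\]
By nonnegativity of $u^n$ (Proposition~\ref{prop:approx-existence}, \ref{prop:remove-amplitude}) we have $|\mathfrak M^n|=\mathfrak M^n$, and $\mathfrak M^n(0)=\sum_i\int_Iu_{i,0}+K/n$ is bounded uniformly in $n$. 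The drift is bounded by $K\mathfrak q_0\mathfrak M^n(s)+K\mathfrak t_0$, again using $u_j^n\ge0$.

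For the martingale, the quadratic variation is
\[
 \langle N^n\rangle_t=\sum_i\int_0^t\sum_m\Bigl(\int_I\sigma_{im}g_n(u_i^n)g_n(u_{i+1}^n)\dd x\Bigr)^2\dd s
 \le \mathfrak s_0\sum_i\int_0^t\Bigl(\int_I\sqrt{p_n(u^n_i)p_n(u^n_{i+1})}\dd x\Bigr)^2\dd s,
\]
and using $p_n(r)\le r$ together with $2\sqrt{ab}\le a+b$ gives $\dd\langle N^n\rangle_t\le C\,\mathfrak M^n(t)^2\dd t$. (Alternatively one bounds it by $C\|u^n\|_{H^K}^2$ via Cauchy--Schwarz on $I$, which would let us quote \eqref{eq:uniform-energy} directly.) Since $\mathfrak M^n\le \sqrt K\|u^n\|_{H^K}$, Proposition~\ref{prop:energy-estimate} already gives finiteness of all moments, so no localization issue arises. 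Nevertheless, the cleanest self-contained route is as follows. Take $\sup_{s\le t}$, raise to the power $2q$, and apply BDG to get $\E\sup_{s\le t}|N^n|^{2q}\le C_q\E(\int_0^t\mathfrak M^n(s)^2\dd s)^{q}\le C_{q,T}\int_0^t\E\sup_{r\le s}|\mathfrak M^n(r)|^{2q}\dd s$. Hölder on the drift integral gives the same form, and Gronwall then yields \eqref{eq:mass-moments} uniformly in $n$, with constants depending only on $\mathfrak q_0,\mathfrak t_0,\mathfrak s_0,K,T,q$ and $\|u_0\|_{L^1}$. To be careful about finiteness before applying Gronwall, stop at $\inf\{t:\mathfrak M^n(t)\ge R\}$ and let $R\to\infty$ via Fatou. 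Indeed, the simplest proof is just $\mathfrak M^n\le\sqrt K\|u^n\|_{H^K}$ combined with \eqref{eq:uniform-energy}; I would present that as the main argument and the identity above as a remark.

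For an arbitrary WV solution $u$, the same two routes apply verbatim: $u\ge0$ is part of Definition~\ref{def:weak-solution}, the energy bound holds by the last assertion of Proposition~\ref{prop:energy-estimate}, and $\varphi=\mathbf 1$ is an admissible test function with $\|G_i(u)^*\mathbf 1\|_{U_i}^2\le\mathfrak s_0(\int_I\sqrt{u_iu_{i+1}})^2$. There is no genuine obstacle. The only point needing care is that nonnegativity is used to identify $|\mathfrak M|$ with $\mathfrak M$ and to bound the Metzler drift by the mass, and this is already secured by the earlier results.
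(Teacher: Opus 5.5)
Your main argument is exactly the paper's proof: the pointwise bound $0\le\mathfrak M^n(t)\le\sqrt K\,\|u^n(t)\|_{H^K}$, from $|I|=1$ and nonnegativity, combined with \eqref{eq:uniform-energy}, and likewise for WV solutions via the last assertion of Proposition~\ref{prop:energy-estimate}. Your alternative route through the test function $\mathbf 1$ is also sound, but it is not needed.
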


\begin{proof}
Since $|I|=1$, we have
\[
 0\le\mathfrak M^n(t)
 \le\sqrt K\,\|u^n(t)\|_{H^K}.
\]
Thus \eqref{eq:mass-moments} follows immediately from \Cref{prop:energy-estimate}.  For a weak solution, define $\mathfrak M(t)=\sum_i\int_Iu_i(t,x)\,\dd x$ and use the identical pointwise bound.
\end{proof}

The next step is the estimates for the time increments and the compactness scale.

\begin{proposition}[Negative-Sobolev time increments]\label{prop:time-increments}
Fix $T>0$ and $q>2$.  There is $C_{q,T}$, independent of $n$, such that
\begin{equation}\label{eq:time-increments}
 \E\|u^n(t)-u^n(s)\|_{H^{-2}(I)^K}^{2q}
 \le C_{q,T}|t-s|^{q},
 \qquad 0\le s\le t\le T.
\end{equation}
Consequently, for every $\delta<(q-1)/(2q)$, the $H^{-2}$-H\"older seminorms of order $\delta$ are bounded in probability.
\end{proposition}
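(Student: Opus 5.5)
The plan is to work with the variational form of \eqref{eq:approx}, tested against $\varphi\in D(A)=H_A^2(I)$. For each $i$ and $0\le s\le t\le T$ we write
\[
 u_i^n(t)-u_i^n(s)=\int_s^t\bigl[Au_i^n(r)+(\Qop u^n(r))_i+\theta_i\bigr]\dd r
 +\int_s^tG_i^n(u^n(r))\,\dd W_i(r)
\]
as an identity in $H^{-2}(I)$. The identity is legitimate because the variational equation holds for every $\varphi\in V\supset D(A)$, and because $\ip{Au}{\varphi}=\ip{u}{A\varphi}_H$ for $u\in H$ and $\varphi\in D(A)$. We then estimate the drift and the stochastic parts separately in the norm of $H^{-2}(I)^K$.

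For the drift, note that $\|Av\|_{H^{-2}}\le\|v\|_H$, directly from the spectral definition, since $\nu_k/(1+\nu_k)\le1$. In addition, $\|(\Qop v)_i+\theta_i\|_{H^{-2}}\le\|(\Qop v)_i+\theta_i\|_H\le C(1+\|v\|_{H^K})$. Jensen's inequality in time then gives
\[
 \Bigl\|\int_s^t\cdots\dd r\Bigr\|_{H^{-2}}^{2q}
 \le |t-s|^{2q-1}\int_s^tC(1+\|u^n(r)\|_{H^K})^{2q}\dd r
 \le C|t-s|^{2q}\bigl(1+\sup_{r\le T}\|u^n(r)\|_{H^K}^{2q}\bigr).
\]
By \Cref{prop:energy-estimate} its expectation is at most $C_{q,T}|t-s|^{2q}\le C_{q,T}T^{q}|t-s|^{q}$.

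For the stochastic part we apply the BDG inequality in the Hilbert space $H^{-2}$. Since $\|\cdot\|_{H^{-2}}\le\|\cdot\|_H$, we have $\|G_i^n(z)\|_{L_2(U_i,H^{-2})}\le\|G_i^n(z)\|_{L_2(U_i,H)}$. Combining this with \eqref{eq:Gn-HS} yields
\[
 \E\Bigl\|\int_s^tG_i^n\,\dd W_i\Bigr\|_{H^{-2}}^{2q}
 \le C_q\E\Bigl(\int_s^t C\|u^n(r)\|_{H^K}^2\dd r\Bigr)^{q}
 \le C_q|t-s|^q\,\E\sup_{r\le T}\|u^n(r)\|_{H^K}^{2q}.
\]
This is bounded uniformly in $n$, again by \Cref{prop:energy-estimate}. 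Summing over $i$ proves \eqref{eq:time-increments}.

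The consequence follows from the Kolmogorov--Chentsov criterion, or equivalently from the Garsia--Rodemich--Rumsey inequality, applied to the $H^{-2}(I)^K$-valued process. An increment moment of order $2q$ bounded by $|t-s|^{1+(q-1)}$ gives, for every $\delta<(q-1)/(2q)$, a bound on $\E[u^n]_{C^\delta([0,T];H^{-2})}^{2q}$ that is uniform in $n$. Chebyshev's inequality then shows the Hölder seminorms are bounded in probability. Continuity of the paths, needed before the criterion applies, is already available from \Cref{prop:remove-amplitude}.

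There is no serious obstacle. The only point needing care is justifying the $H^{-2}$-valued identity and the BDG inequality in $H^{-2}$. Here one uses that $u^n\in L^2(\Omega\times[0,T];V^K)$ and that the stochastic integral is an $H$-valued martingale, which embeds continuously into $H^{-2}$.
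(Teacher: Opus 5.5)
Your proof is correct and follows the paper's argument. You use the same split into heat, reaction--immigration and stochastic parts, BDG together with \eqref{eq:Gn-HS} and the uniform energy bounds, and then Kolmogorov. The only difference is the heat term: you bound it by $\|Av\|_{H^{-2}}\le\|v\|_H$ and the $\sup_t\|u^n\|_H$ moments, which gives order $|t-s|^{2q}$. The paper instead uses $A:V\to V^*$ with the $L^2(0,T;V)$ moment bound, which gives order $|t-s|^q$; either is enough.
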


\begin{proof}
Since $A:V\to V^*=H^{-1}(I)$ is bounded and $H^{-1}\hookrightarrow H^{-2}$,
\[
 \E\left\|\int_s^tAu^n(r)\dd r\right\|_{H^{-2}}^{2q}
 \le |t-s|^q\E\left(\int_s^t\|u^n(r)\|_V^2\dd r\right)^q
 \le C|t-s|^q.
\]
The reaction operator is bounded on $H^K$, and the immigration is deterministic, so their increment has a $2q$th moment bounded by $C|t-s|^{2q}$.  For the stochastic increment, BDG and \eqref{eq:Gn-HS} give
\[
 \E\left\|\int_s^tG^n(u^n(r))\dd W(r)\right\|_{H^K}^{2q}
 \le C_q\E\left(\int_s^t\bigl(1+\|u^n(r)\|_{H^K}^2\bigr)\dd r\right)^q
 \le C_{q,T}|t-s|^q.
\]
This proves \eqref{eq:time-increments}, Kolmogorov's criterion gives the stated H\"older control.
\end{proof}

\begin{proposition}[Continuous spatial paths]\label{prop:spatial-continuity}
Let $z$ be either a regularized solution $u^n$ or a WV solution of \eqref{eq:spde}.  For every $T>0$, $z$ has a version in
\[
 C\bigl([0,T];C(\overline I)^K\bigr).
\]
More precisely, for every $\alpha\in(1/2,1)$ and every {$q>2/(1-\alpha)$},
\begin{equation}\label{eq:spatial-path-bound}
 \E\|z\|_{C([0,T];H^\alpha(I)^K)}^q
 \le C_{\alpha,q,T}\bigl(1+\|u_0\|_{H^1(I)^K}^q\bigr),
\end{equation}
with a constant uniform in $n$ for $z=u^n$.
\end{proposition}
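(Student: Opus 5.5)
We use the mild formulation and split $z$ into three pieces, estimating each in $C([0,T];H^\alpha(I)^K)$:
\[
 z_i(t)=P_tz_i(0)+\int_0^tP_{t-s}F_i(s)\,\dd s+\int_0^tP_{t-s}\Gamma_i(s)\,\dd W_i(s).
\]
Here $F_i=\sum_jq_{ij}z_j+\theta_i$. The coefficient $\Gamma_i$ equals $G_i^n(u^n)$ when $z=u^n$ and $G_i(u)$ when $z$ is a WV solution.

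\textbf{Identifying $z$ as a mild solution.} For $u^n$ the mild form is available from \Cref{prop:remove-amplitude}. For a WV solution we first justify \eqref{eq:mild}, following \Cref{rem:mild}. We test \eqref{eq:variational} with $\varphi=e_k$, which lies in $D(A)\subset V$ for every $k$. This gives a scalar linear SDE for each coordinate $\ip{z_i(t)}{e_k}$, solved by variation of constants with factor $e^{-\nu_k(t-s)}$. Summing over $k$ in $H$ yields \eqref{eq:mild}, with the stochastic convolution defined in $H$. This is legitimate because $\E\int_0^T\|G_i(u)\|_{L_2}^2\,\dd s<\infty$ by \eqref{eq:G-HS} and \eqref{eq:solution-space}.

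\textbf{Initial and deterministic terms.} Since $z(0)\in H^1$ and $P_t$ is a contraction semigroup, strongly continuous on $H^\alpha$, we get $\sup_t\|P_tz(0)\|_{H^\alpha}\le C\|z(0)\|_{H^1}$. For $u^n$ the initial datum is $u_0+n^{-1}$, which is bounded in $H^1$ uniformly in $n$. The deterministic convolution is handled by \eqref{eq:nLM-deterministic-convolution}, which uses $\alpha<1$. Its $q$-th moment is therefore bounded by $C\,\E\bigl(\int_0^T\|F\|_H^2\bigr)^{q/2}$. By \Cref{prop:energy-estimate} this is at most $C(1+\|u_0\|_{H^K}^q)$, uniformly in $n$. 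Time continuity in $H^\alpha$ follows from the same estimate applied to increments, using dominated convergence and the strong continuity of $P$.

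\textbf{Stochastic convolution (main step).} We use the factorization of \Cref{lem:factorization}. Fix $\gamma\in(0,1)$ with $\alpha/2<\gamma<1/2-1/q$; such $\gamma$ exists precisely because $q>2/(1-\alpha)$. Define
\[
 Y_\gamma(s)=\int_0^s(s-r)^{-\gamma}P_{s-r}\Gamma(r)\,\dd W(r).
\]
By BDG and \eqref{eq:semigroup-Halpha},
\[
 \E\|Y_\gamma(s)\|_{H^\alpha}^q\le C\,\E\Bigl(\int_0^s(s-r)^{-2\gamma-\alpha}\|\Gamma(r)\|_{L_2}^2\,\dd r\Bigr)^{q/2}.
\]
Bound $\|\Gamma(r)\|_{L_2}^2$ by $C\|z(r)\|_{H^K}^2$ using \eqref{eq:G-HS} or \eqref{eq:Gn-HS}. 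Since $2\gamma+\alpha<1$, the kernel is integrable, and the right-hand side is at most $C\,\E\sup_r\|z(r)\|_H^q$. This is uniformly bounded by \Cref{prop:energy-estimate}.

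Next, $\gamma>1/q$ implies that the map $R_\gamma f(t)=\int_0^t(t-s)^{\gamma-1}P_{t-s}f(s)\,\dd s$ is bounded from $L^q(0,T;H^\alpha)$ into $C([0,T];H^\alpha)$. The stochastic convolution equals $\tfrac{\sin\pi\gamma}{\pi}R_\gamma Y_\gamma$, which gives the $q$-th moment bound.

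\textbf{Conclusion.} Summing the three pieces proves \eqref{eq:spatial-path-bound}. Since $\alpha>1/2$, the embedding $H^\alpha\hookrightarrow C(\bar I)$ then gives the continuous $C(\bar I)^K$-valued version.

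\textbf{Main obstacle.} The delicate point is the weak-solution case. There one must check that the mild representation holds and that the factorized stochastic integral coincides with the $H$-valued stochastic convolution. Both follow from the finite energy \eqref{eq:solution-space} together with stochastic Fubini. Everything else uses only growth bounds that are uniform in $n$.
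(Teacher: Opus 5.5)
Your route is the paper's: the mild formulation, the analytic-semigroup bound for the deterministic convolution, and factorization (\Cref{lem:factorization}) fed by \eqref{eq:semigroup-Halpha}, the linear growth \eqref{eq:G-HS}/\eqref{eq:Gn-HS} and the energy moments of \Cref{prop:energy-estimate}. It ends, as the paper does, with $H^\alpha\hookrightarrow C(\overline I)$. Your eigenfunction derivation of \eqref{eq:mild} for WV solutions correctly fills in the paper's one-line appeal to semigroup duality in \Cref{rem:mild}.

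However, the factorization exponent is chosen in the wrong interval, and as written the main step fails. If $\gamma>\alpha/2$ and $\alpha>1/2$, then $2\gamma+\alpha>2\alpha>1$. So the kernel $(s-r)^{-2\gamma-\alpha}$ is not integrable near $r=s$. Your sentence ``since $2\gamma+\alpha<1$'' contradicts your own choice of $\gamma$, and the BDG bound you write for $\E\|Y_\gamma(s)\|_{H^\alpha}^q$ is infinite.

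Your later steps actually rely on two constraints. The first is $\gamma>1/q$, so that $r^{\gamma-1}\in L^{q'}(0,T)$ and $R_\gamma:L^q(0,T;H^\alpha)\to C([0,T];H^\alpha)$ is bounded. The second is $2\gamma+\alpha<1$, for integrability of the BDG kernel. Together they require $\frac1q<\gamma<\frac{1-\alpha}{2}$, which is exactly \eqref{eq:factorization-exponent}; this interval is nonempty precisely when $q>2/(1-\alpha)$. Your interval $(\alpha/2,\tfrac12-\tfrac1q)$ is its reflection under $\gamma\mapsto\tfrac12-\gamma$. With $\gamma$ taken in the correct interval, the rest of your argument goes through unchanged and gives \eqref{eq:spatial-path-bound}, uniformly in $n$.
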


\begin{proof}
Use the mild representation \eqref{eq:mild}, with $G$ replaced by $G^n$ for the regularized equation.  The initial term is continuous in $H^\alpha$ because $u_0\in H^1$; in the regularized case the same statement holds uniformly for
$u_0^n=u_0+n^{-1}\mathbf1$.

Let $F=\Qop z+\theta$.  Since $\Qop$ is bounded on $H^K$, the energy estimate gives $F\in L^2(0,T;H^K)$ with the required moments.  The analytic-semigroup estimate and Cauchy--Schwarz imply, for $\alpha<1$,
\begin{align}\label{eq:deterministic-Halpha-bound}
 \sup_{t\le T}\left\|\int_0^tP_{t-s}F(s)\,\dd s\right\|_{H^\alpha}^2
 &\le C\sup_{t\le T}
 \left(\int_0^t(t-s)^{-\alpha/2}\|F(s)\|_H\,\dd s\right)^2\notag\\
 &\le C_{\alpha,T}\int_0^T\|F(s)\|_H^2\,\dd s.
\end{align}
To see continuity, split the increment at $t-r$: on $[0,t-r]$ use strong continuity of $P_h$ on $H^\alpha$, and on $[t-r,t]$ use the same bound with a time interval of length $r$; then let first the increment and afterward $r$ tend to zero.

For the stochastic convolution, \eqref{eq:semigroup-Halpha},
\eqref{eq:G-HS} or \eqref{eq:Gn-HS}, the high energy moments of
\Cref{prop:energy-estimate}, and the factorization 
\Cref{lem:factorization} give a continuous $H^\alpha$-valued version and
\eqref{eq:spatial-path-bound}, provided $q>2/(1-\alpha)$.  The definition
\eqref{eq:Halpha-definition} and the embedding
$H^\alpha(I)\hookrightarrow C(\overline I)$ for $\alpha>1/2$ complete the proof.  Thus the entire path, including $t=0$, is continuous in
$C(\overline I)^K$.
\end{proof}

Finally, the strict-gap power entropy and boundary occupation will be estimated.
Choose $\rho\in(0,1/2)$ so small that
\begin{equation}\label{eq:rho-choice}
 \frac{\rho\Lambda}{2}\le\frac{\delta_*}{2},
\end{equation}
with the restriction omitted when $\Lambda=0$, and define
\begin{equation}\label{eq:p-choice}
 p=\frac12-\rho\in(0,1/2).
\end{equation}
Then
\begin{equation}\label{eq:cp}
 c_p:=\min_i\essinf_{x\in I}
 \left(q_{i,i+1}(x)-\frac{1-p}{2}\lambda_i(x)\right)
 \ge\frac{\delta_*}{2}>0.
\end{equation}
The term $q_{i,i+1}$ is singled out only because it has the same catalyst $u_{i+1}$ as the $i$th noise variance.  The remaining off-diagonal terms will be kept as additional nonnegative entropy production.

\begin{proposition}[Uniform power entropy]\label{prop:power-approx}
For every $T>0$, there is $C_T$ independent of $n$ and $\eta\in(0,1]$ such that, for each $i$,
\begin{align}\label{eq:power-approx}
 &\E\int_0^T\!\int_I
 (u_i^n+\eta)^{p-2}|\partial_xu_i^n|^2\dd x\dd t
 +\E\int_0^T\!\int_I
 u_{i+1}^n(u_i^n+\eta)^{p-1}\dd x\dd t\notag\\
 &\quad+\E\int_0^T\!\int_I
 \bigl(\mathcal R_i(u^n)+\theta_i\bigr)(u_i^n+\eta)^{p-1}\dd x\dd t
 \le C_T.
\end{align}
\end{proposition}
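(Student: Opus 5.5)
The plan is to apply the variational Itô formula to the concave power entropy
\[
 \Phi_\eta(r)=\frac{(r+\eta)^p}{p},\qquad
 \Phi_\eta'(r)=(r+\eta)^{p-1},\qquad
 \Phi_\eta''(r)=-(1-p)(r+\eta)^{p-2},
\]
evaluated along $\int_I\Phi_\eta(u_i^n(t,x))\,\dd x$. Since $u_i^n\ge0$ and $\eta>0$, $\Phi_\eta$ is $C^2$ on $[-\eta/2,\infty)$ with bounded first and second derivatives there. Modifying it outside $[0,\infty)$, the Itô formula of \cite[Chapter~4]{PrevotRockner2007} applies for fixed $n$ and $\eta$, using the regularity from \Cref{prop:remove-amplitude}. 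The resulting identity, after taking expectations (the stochastic integral is a true martingale because $\Phi_\eta'\le\eta^{p-1}$ and the energy bound holds), reads
\begin{align*}
 &\E\int_I\Phi_\eta(u_i^n(T))\dd x
 +\frac{1-p}{2}\E\int_0^T\!\int_I(u_i^n+\eta)^{p-2}|\partial_xu_i^n|^2
 +\frac{1-p}{2}\E\int_0^T\!\int_I\lambda_i\frac{p_n(u_i^n)p_n(u_{i+1}^n)}{(u_i^n+\eta)^{2-p}}\\
 &\quad=\int_I\Phi_\eta(u_{i,0}^n)
 +\E\int_0^T\!\int_I(u_i^n+\eta)^{p-1}\Bigl[q_{ii}u_i^n+q_{i,i+1}u_{i+1}^n+\mathcal R_i(u^n)+\theta_i\Bigr].
\end{align*}
The heat term has the favorable sign because $\Phi_\eta$ is concave; the Neumann condition removes boundary terms.

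Next I would move terms so that the positive quantities of \eqref{eq:power-approx} appear on the left. The key pointwise estimate uses $p_n(r)\le r$ and $r/(r+\eta)\le1$:
\[
 \lambda_i\frac{p_n(u_i^n)p_n(u_{i+1}^n)}{(u_i^n+\eta)^{2-p}}
 \le\lambda_i\,u_{i+1}^n(u_i^n+\eta)^{p-1}.
\]
The Itô correction is therefore at most $\frac{1-p}{2}\lambda_iu_{i+1}^n(u_i^n+\eta)^{p-1}$. It is dominated by the incoming drift $q_{i,i+1}u_{i+1}^n(u_i^n+\eta)^{p-1}$, and the difference is at least $c_p\,u_{i+1}^n(u_i^n+\eta)^{p-1}$ by \eqref{eq:cp}. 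I would rearrange by writing the identity as
\[
\text{heat}+\text{(drift production)}=\text{final entropy}+\text{Itô correction}-\text{initial}-q_{ii}\text{ term}.
\]
This gives
\[
 \tfrac{1-p}{2}\,\text{heat}
 +c_p\E\!\int\!\!\int u_{i+1}^n(u_i^n+\eta)^{p-1}
 +\E\!\int\!\!\int(\mathcal R_i+\theta_i)(u_i^n+\eta)^{p-1}
 \le \E\!\int_I\Phi_\eta(u_i^n(T))+\Bigl|\E\!\int\!\!\int q_{ii}u_i^n(u_i^n+\eta)^{p-1}\Bigr|.
\]
The initial entropy term is dropped since it is nonnegative.

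The right-hand side must then be bounded uniformly in $n$ and $\eta$:
\begin{itemize}
 \item $\Phi_\eta(r)\le(r+1)^p/p\le(1+r)/p$, so the final-entropy term is controlled by the mass moments of \Cref{prop:mass}.
 \item $u_i^n(u_i^n+\eta)^{p-1}\le(u_i^n+\eta)^p\le1+u_i^n$, so the $q_{ii}$ term is at most $\mathfrak q_0\,T\,(1+\E\sup_t\mathfrak M^n)$.
\end{itemize}
Dividing by $\min\{(1-p)/2,c_p\}$ gives \eqref{eq:power-approx}, with $C_T$ depending only on the allowed quantities.

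The main obstacle is the sign bookkeeping in the rearrangement. In the identity, the entropy production terms appear on the drift side, not the dissipation side. The inequality must therefore be read as a lower bound on $\E\int\Phi_\eta(u(T))$, using an upper bound on that entropy; this works only because $\Phi_\eta$ has sublinear growth. I would check this carefully, together with the pointwise correction bound, which is exactly where the gap \eqref{eq:cp} enters. If any integrability issue arises in justifying the Itô formula near $r=0$, I would first use the truncation $\Phi_\eta\circ\chi$ on $[0,L]$ and pass to the limit by monotone convergence.
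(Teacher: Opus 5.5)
Your route is essentially the paper's. You apply It\^o's formula to the concave entropy $\int_I(u_i^n+\eta)^p\,\dd x$ (you normalize by $1/p$), use $p_n(u_i^n)p_n(u_{i+1}^n)(u_i^n+\eta)^{p-2}\le u_{i+1}^n(u_i^n+\eta)^{p-1}$ so the cyclic drift absorbs the It\^o correction with margin $c_p$, keep the off-cycle and immigration terms as production, and use sublinear growth of the entropy to control the terminal value and the diagonal term. Taking expectations directly instead of localizing by $\tau_R$ is harmless, since for fixed $\eta$ every term is dominated by a power of $\eta$ times energy quantities.

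There is one sign slip. In your displayed identity the heat term is on the wrong side: since
\[
\langle Au_i^n,\Phi_\eta'(u_i^n)\rangle_{V^*,V}
=-\tfrac12\int_I\Phi_\eta''(u_i^n)|\partial_xu_i^n|^2\,\dd x
=+\tfrac{1-p}{2}\int_I(u_i^n+\eta)^{p-2}|\partial_xu_i^n|^2\,\dd x,
\]
it is a production term belonging on the right with the drift, not a dissipation term next to $\E\int_I\Phi_\eta(u_i^n(T))\,\dd x$. Your rearranged identity and final inequality are exactly what the correct identity gives, so the conclusion stands.

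Also, the It\^o formula you need is a general one for $\int_I f(u)\,\dd x$, such as \Cref{lem:variational-ito}, rather than the norm-squared formula of Pr\'ev\^ot--R\"ockner.
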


\begin{proof} We divide the proof into six steps.

\medskip
\noindent\text{Step 1: an explicit energy stopping time.}

Fix $i$ and $\eta>0$.  On $[0,\infty)$ define
\[
 f_{p,\eta}(r)=(r+\eta)^p,
 \qquad
 \Phi_{p,\eta}(w)=\int_I f_{p,\eta}(w(x))\dd x.
\]
Since $u_i^n\ge0$, extend $f_{p,\eta}$ smoothly to $\R$ so that its first two derivatives are bounded and its second derivative is uniformly continuous.  
For $R\ge1$, set
\begin{align}\label{eq:tauR-a}
 \tau_R=\inf\Biggl\{t\in[0,T]:
 &\norm{u^n(t)}_{H^K}^2
 +\int_0^t\norm{u^n(s)}_{V^K}^2\,\dd s\\
 &\quad+\sum_{\ell=1}^K\int_0^t
   \norm{G_\ell^n(u^n(s))}_{L_2(U_\ell,H)}^2\,\dd s
 \ge R\Biggr\}.
\notag
\end{align}
The regularity and energy bounds for $u^n$ imply $\tau_R\uparrow T$ almost surely.  Every term below is integrable on $[0,\tau_R]$.

Apply \Cref{lem:variational-ito} to $X=u_i^n$, with drift
\[
 Y_i^n=Au_i^n+\sum_{j=1}^Kq_{ij}u_j^n+\theta_i
\]
after separating the $V^*$-valued heat part from the $H$-valued lower-order part, and with noise coefficient $Z=G_i^n(u^n)$.  For every $t\le T$,
\begin{align}
 \Phi_{p,\eta}(u_i^n(t\wedge\tau_R))
 &=\Phi_{p,\eta}(u_{i,0}^n)
  +\int_0^{t\wedge\tau_R}
    \ip{Au_i^n}{p(u_i^n+\eta)^{p-1}}_{V^*,V}\,\dd s\notag\\
 &\quad+p\int_0^{t\wedge\tau_R}\int_I
   (u_i^n+\eta)^{p-1}
   \left(\sum_{j=1}^Kq_{ij}u_j^n+\theta_i\right)\,\dd x\,\dd s\notag\\
 &\quad-\frac{p(1-p)}2
 \int_0^{t\wedge\tau_R}\sum_{m=1}^{\infty}\int_I
 (u_i^n+\eta)^{p-2}
 \abs{G_i^n(u^n)f_m}^2\,\dd x\,\dd s\notag\\
 &\quad+M_{i,p,\eta}^n(t\wedge\tau_R),
 \label{eq:stopped-power-ito}
\end{align}
where
\begin{align}\label{eq:power-martingale}
 M_{i,p,\eta}^n(t)
 =p\sum_{m=1}^{\infty}\int_0^t
 \Biggl[\int_I
 &(u_i^n(s)+\eta)^{p-1}
 \sigma_{im}g_n(u_i^n(s))g_n(u_{i+1}^n(s))\,\dd x\Biggr]
 \dd\beta_{im}(s).
\end{align}

\medskip
\noindent\text{Step 2: the heat term and its sign.}
Using the variational definition of the Neumann Laplacian and the Sobolev chain rule,
\begin{align*}
 \ip{Au_i^n}{p(u_i^n+\eta)^{p-1}}_{V^*,V}
 &=-\frac12\int_I
 \partial_xu_i^n\,
 \partial_x\bigl[p(u_i^n+\eta)^{p-1}\bigr] \,\dd x\\
 &=-\frac{p(p-1)}2\int_I
 (u_i^n+\eta)^{p-2}\abs{\partial_xu_i^n}^2\,\dd x\\
 &=\frac{p(1-p)}2\int_I
 (u_i^n+\eta)^{p-2}\abs{\partial_xu_i^n}^2\,\dd x.
\end{align*}

\medskip
\noindent\text{Step 3: computation of the complete infinite-mode trace.}
By \eqref{eq:Gn-operator},
\begin{align*}
 \abs{G_i^n(u^n)f_m}^2
 &=\sigma_{im}^2
   g_n(u_i^n)^2g_n(u_{i+1}^n)^2=\sigma_{im}^2p_n(u_i^n)p_n(u_{i+1}^n).
\end{align*}
All factors in the last expression are nonnegative.  Therefore Tonelli's theorem and \eqref{eq:covkernel} give, pointwise in time,
\begin{align}\label{eq:trace-computation}
 \sum_{m=1}^{\infty}\int_I
 &(u_i^n+\eta)^{p-2}
 \abs{G_i^n(u^n)f_m}^2\,\dd x\notag\\
 &=\int_I
 (u_i^n+\eta)^{p-2}
 p_n(u_i^n)p_n(u_{i+1}^n)
 \sum_{m=1}^{\infty}\sigma_{im}^2\,\dd x\notag\\
 &=\int_I
 \lambda_i p_n(u_i^n)p_n(u_{i+1}^n)
 (u_i^n+\eta)^{p-2}\,\dd x.
\end{align}
The trace is finite.  Indeed, $p_n(r)\le r$ for $r\ge0$ and, because $p-2<0$,
$(u_i^n+\eta)^{p-2}\le\eta^{p-2}$.  Hence
\begin{align}\label{eq:trace-integrability}
 &\int_I\lambda_i p_n(u_i^n)p_n(u_{i+1}^n)(u_i^n+\eta)^{p-2}\,\dd x\notag\\
 &\qquad\le
 \norm{\lambda_i}_\infty\eta^{p-2}
 \int_Iu_i^nu_{i+1}^n\,\dd x
 \le\frac{\norm{\lambda_i}_\infty\eta^{p-2}}2
 \left(\norm{u_i^n}_H^2+\norm{u_{i+1}^n}_H^2\right).
\end{align}
The state energy estimate makes the time integral of the right-hand side integrable.

Substitution of the heat computation and \eqref{eq:trace-computation} into \eqref{eq:stopped-power-ito} yields the stopped pathwise identity
\begin{align}
 \Phi_{p,\eta}(u_i^n(t\wedge\tau_R))
 &=\Phi_{p,\eta}(u_{i,0}^n)
 +\frac{p(1-p)}2\int_0^{t\wedge\tau_R}\int_I
 (u_i^n+\eta)^{p-2}\abs{\partial_xu_i^n}^2\,\dd x\,\dd s\notag\\
 &\quad+p\int_0^{t\wedge\tau_R}\int_I
 (u_i^n+\eta)^{p-1}
 \left(\sum_{j=1}^Kq_{ij}u_j^n+\theta_i\right)\,\dd x\,\dd s\notag\\
 &\quad-\frac{p(1-p)}2\int_0^{t\wedge\tau_R}\int_I
 \lambda_i p_n(u_i^n)p_n(u_{i+1}^n)
 (u_i^n+\eta)^{p-2}\,\dd x\,\dd s\notag\\
 &\quad+M_{i,p,\eta}^n(t\wedge\tau_R).
 \label{eq:stopped-final}
\end{align}

\medskip
\noindent\text{Step 4: the martingale is square integrable and has mean zero.}
The adjoint form of the stochastic integrand gives
\begin{align*}
 \langle M_{i,p,\eta}^n\rangle_{t\wedge\tau_R}
 &=p^2\int_0^{t\wedge\tau_R}
 \norm{G_i^n(u^n(s))^*(u_i^n(s)+\eta)^{p-1}}_{U_i}^2\,\dd s.
\end{align*}
Since $p-1<0$ and $u_i^n\ge0$,
\[
 0<(u_i^n+\eta)^{p-1}\le\eta^{p-1}.
\]
Thus
\begin{align}\label{eq:martingale-bracket-bound}
 \langle M_{i,p,\eta}^n\rangle_{t\wedge\tau_R}
 &\le p^2\eta^{2p-2}|I|
 \int_0^{t\wedge\tau_R}
 \norm{G_i^n(u^n(s))}_{\HS(U_i,H)}^2\,\dd s
 \le p^2\eta^{2p-2}|I|R.
\end{align}
Consequently $M_{i,p,\eta}^n(\cdot\wedge\tau_R)$ is a square-integrable martingale and
\begin{equation}\label{eq:martingale-zero}
 \E M_{i,p,\eta}^n(t\wedge\tau_R)=0.
\end{equation}

Taking expectations in \eqref{eq:stopped-final} at $t=T$ therefore gives the stopped version of identity \eqref{eq:identity-6-6} below.

\medskip
\noindent\text{Step 5: removal of the stopping time.}
We verify the required uniform integrability term by term.

First, because $0<p<1$ and $\eta\le1$,
\begin{equation}\label{eq:entropy-growth}
 0\le\Phi_{p,\eta}(u_i^n(t))
 \le C\left(1+\norm{u_i^n(t)}_H^2\right).
\end{equation}
Hence the terminal entropy is uniformly integrable by the $H$-energy estimate.

Second, the weighted gradient term is bounded in absolute value by
$ \eta^{p-2}\int_0^T\norm{u_i^n(s)}_V^2\,\dd s,$
which is integrable.  Third, since $(u_i^n+\eta)^{p-1}\le\eta^{p-1}$ and the coefficients $q_{ij}$ and $\theta_i$ are bounded,
\begin{align*}
 &\int_0^T\int_I
 (u_i^n+\eta)^{p-1}
 \abs{\sum_{j=1}^Kq_{ij}u_j^n+\theta_i}\,\dd x\,\dd s\le C_{\eta,\mathfrak q_0,\mathfrak t_0,T}
 \left(1+\int_0^T\norm{u^n(s)}_{H^K}^2\,\dd s\right),
\end{align*}
where Cauchy--Schwarz and the finiteness of $|I|$ were used.  Finally, the trace is dominated by \eqref{eq:trace-integrability}.  All these dominating random variables have finite expectation.

Since $\tau_R\uparrow T$ almost surely, dominated convergence in every deterministic term of the stopped expectation identity yields

\begin{align}
 \E\Phi_{p,\eta}(u_i^n(T))
 &=\Phi_{p,\eta}(u_{i,0}^n)
 +\frac{p(1-p)}{2}\E\int_0^T\!\int_I
 (u_i^n+\eta)^{p-2}\abs{\partial_xu_i^n}^2\,\dd x\,\dd t\notag\\
 &\quad+p\E\int_0^T\!\int_I
 (u_i^n+\eta)^{p-1}
 \left(\sum_{j=1}^Kq_{ij}u_j^n+\theta_i\right)\,\dd x\,\dd t\notag\\
 &\quad-\frac{p(1-p)}{2}\E\int_0^T\!\int_I
 \lambda_i p_n(u_i^n)p_n(u_{i+1}^n)
 (u_i^n+\eta)^{p-2}\,\dd x\,\dd t.
 \label{eq:identity-6-6}
\end{align}

\medskip
\noindent\text{Step 6: finish of the proof.}

We now decompose the matrix row.  The cyclic term and the negative It\^o trace satisfy
\begin{align}\label{eq:power-combine}
 &p q_{i,i+1}u_{i+1}^n(u_i^n+\eta)^{p-1}
 -\frac{p(1-p)}{2}\lambda_i p_n(u_i^n)p_n(u_{i+1}^n)
 (u_i^n+\eta)^{p-2}\notag\\
 &\quad\ge p\left(q_{i,i+1}-\frac{1-p}{2}\lambda_i\right)
 u_{i+1}^n(u_i^n+\eta)^{p-1}\notag\\
 &\quad\ge pc_pu_{i+1}^n(u_i^n+\eta)^{p-1}.
\end{align}
Indeed, $p_n(u_i^n)\le u_i^n\le u_i^n+\eta$ and $p_n(u_{i+1}^n)\le u_{i+1}^n$; since $p-2<0$, these inequalities have exactly the direction used above.

For $j\notin\{i,i+1\}$, the Metzler condition gives
\begin{equation}\label{eq:power-other-offdiag}
 p q_{ij}u_j^n(u_i^n+\eta)^{p-1}\ge0,
\end{equation}
and we retain their sum as $p\mathcal R_i(u^n)(u_i^n+\eta)^{p-1}$.  Immigration is likewise nonnegative.  The only possible loss is the diagonal entry.  Pointwise,
\begin{align}\label{eq:power-diagonal}
 p q_{ii}u_i^n(u_i^n+\eta)^{p-1}
 &\ge-pq_{ii}^{-}u_i^n(u_i^n+\eta)^{p-1}\ge-p\|q_{ii}^{-}\|_\infty (u_i^n+\eta)^p.
\end{align}
Thus every matrix term has been accounted for: off-diagonal entries help, while the diagonal negative part has the same order as the entropy itself.

Insert \eqref{eq:power-combine}--\eqref{eq:power-diagonal} into \eqref{eq:identity-6-6} and move the three nonnegative production terms to the left.  Since $0<p<1$,
\[
 (r+\eta)^p\le C(1+r^2),\qquad r\ge0,
\]
uniformly for $0<\eta\le1$.  The energy estimate therefore bounds both the terminal entropy and the time integral of the diagonal loss.  This proves \eqref{eq:power-approx}.
\end{proof}

\begin{corollary}[Boundary occupation for all incoming parts]\label{cor:occupation-approx}
For every $0<r\le1$,
\begin{align}\label{eq:occupation-approx}
 \E\int_0^T\!\int_I
 u_{i+1}^n\one_{\{u_i^n\le r\}}\dd x\dd t
 &\le C_Tr^{1-p},\\
 \E\int_0^T\!\int_I
 \bigl(\mathcal R_i(u^n)+\theta_i\bigr)
 \one_{\{u_i^n\le r\}}\dd x\dd t
 &\le C_Tr^{1-p},\\
 \E\int_0^T\!\int_I
 |\partial_xu_i^n|^2\one_{\{u_i^n\le r\}}\dd x\dd t
 &\le C_Tr^{2-p}.
\end{align}
\end{corollary}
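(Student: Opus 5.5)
The plan is to derive the corollary from \Cref{prop:power-approx} by a Chebyshev-type comparison of indicator functions with the power weights. The key is to choose the entropy regularization $\eta$ equal to the level $r$. Fix $0<r\le1$ and apply \eqref{eq:power-approx} with $\eta=r$. This is admissible because the constant $C_T$ there is independent of $\eta\in(0,1]$ and of $n$.

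For the first two estimates, observe the pointwise bound on $\{u_i^n\le r\}$:
\[
 u_i^n+r\le 2r,
\]
so, since $p-1<0$,
\[
 (u_i^n+r)^{p-1}\ge(2r)^{p-1}.
\]
This gives
\[
 \one_{\{u_i^n\le r\}}\le 2^{1-p}r^{1-p}(u_i^n+r)^{p-1}.
\]
Multiply this inequality by the nonnegative factor $u_{i+1}^n$, respectively by $\mathcal R_i(u^n)+\theta_i$. Nonnegativity of the latter uses the Metzler condition, $\theta_i\ge0$, and $u^n\ge0$ from \Cref{prop:remove-amplitude}. Integrating and taking expectations, the second and third terms of \eqref{eq:power-approx} bound each left-hand side by $2^{1-p}C_Tr^{1-p}$.

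For the gradient estimate, use the same level set with exponent $p-2<0$:
\[
 \one_{\{u_i^n\le r\}}\le(2r)^{2-p}(u_i^n+r)^{p-2}.
\]
Multiply by $|\partial_xu_i^n|^2$ and invoke the first term of \eqref{eq:power-approx}. This yields the bound $2^{2-p}C_Tr^{2-p}$. The constants $2^{1-p}$ and $2^{2-p}$ are absorbed into $C_T$.

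There is essentially no obstacle. The only point requiring care is that the constant in \Cref{prop:power-approx} is uniform in $\eta$ down to $\eta=r$; this was recorded explicitly there, because the diagonal loss and terminal entropy were bounded using $(r+\eta)^p\le C(1+r^2)$ uniformly for $\eta\le1$. The same argument, applied to \eqref{eq:power-main} for WV solutions, gives \eqref{eq:occupation-main}--\eqref{eq:occupation-gradient-main} in \Cref{thm:power-main}.
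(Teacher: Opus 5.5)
Your proposal is correct and is essentially the paper's own argument: take $\eta=r$ in \Cref{prop:power-approx}, which is admissible since its constant is uniform in $n$ and $\eta\in(0,1]$, and use $(u_i^n+r)^{p-1}\ge(2r)^{p-1}$ and $(u_i^n+r)^{p-2}\ge(2r)^{p-2}$ on $\{u_i^n\le r\}$. Your remarks on the nonnegativity of $\mathcal R_i(u^n)+\theta_i$ and on absorbing the factors $2^{1-p}$ and $2^{2-p}$ into $C_T$ spell out details the paper leaves implicit.
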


\begin{proof}
Apply \Cref{prop:power-approx} with $\eta=r$.  On $\{u_i^n\le r\}$,
\[
 (u_i^n+r)^{p-1}\ge(2r)^{p-1},
 \qquad
 (u_i^n+r)^{p-2}\ge(2r)^{p-2},
\]
because both exponents are negative.  Multiplication by $r^{1-p}$ or $r^{2-p}$ gives the stated estimates.
\end{proof}

\begin{proof}[Proof of \Cref{thm:power-main}]

The estimate is needed for every solution in the uniqueness class, not only for the solution produced by the approximation.  
Therefore, we need to prove it directly.  However, the proof is completely the same as that of \Cref{prop:power-approx} with $u^n$ replaced by $u$,
so we omit it.
\end{proof}

\begin{remark}[The strict exponent and the role of the matrix]\label{rem:occupation-exponent}
Since $p=1/2-\rho$,
\[
 1-p=\frac12+\rho>\frac12.
\]
This strict gain is generated only by the gap in the distinguished entry $q_{i,i+1}$.  The other off-diagonal entries do not need a positive lower bound.
  At the critical gap $\delta_*=0$, the choice $\rho>0$ is unavailable.
\end{remark}

\section{Regularized Hellinger geometry}\label{sec:hellinger-algebra}

We prove pathwise uniqueness by estimating the Hellinger distance between two synchronized WV solutions.  We begin with its regularization and the dissipative contribution of the heat operator.
For $\eta>0$, define
\begin{equation}\label{eq:heta}
 h_\eta(x,y)=\bigl(\sqrt{x+\eta}-\sqrt{y+\eta}\bigr)^2,
 \qquad x,y\ge0.
\end{equation}
Then $h_\eta\to h$ as $\eta\downarrow0$, where $h$ is defined in \eqref{eq:hellinger-distance}.  

\begin{lemma}[Convexity and heat dissipation]\label{lem:hellinger-convex}

The Hessian $D^2h_\eta(x,y)$ is positive semidefinite and has rank one.
Consequently, for every nonnegative $u,v\in V=H^1(I)$, the Sobolev
compositions $h_{\eta,x}(u,v)$ and $h_{\eta,y}(u,v)$ belong to $V$ and
\begin{align}\label{eq:hellinger-heat}
 &{}_{V^*}\!\ip{Au}{h_{\eta,x}(u,v)}_V
 +{}_{V^*}\!\ip{Av}{h_{\eta,y}(u,v)}_V\notag\\
 &\qquad=-\frac12\int_I
 \begin{pmatrix}\partial_xu\\ \partial_xv\end{pmatrix}^{\!T}
 D^2h_\eta(u,v)
 \begin{pmatrix}\partial_xu\\ \partial_xv\end{pmatrix}\dd x\le0.
\end{align}

\end{lemma}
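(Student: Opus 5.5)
Write $a=\sqrt{x+\eta}$ and $b=\sqrt{y+\eta}$, so that $h_\eta=(a-b)^2=x+y+2\eta-2ab$. We compute the Hessian of $\psi(x,y):=-2\sqrt{(x+\eta)(y+\eta)}$, since the linear part $x+y+2\eta$ contributes nothing to second derivatives. Direct differentiation gives
\[
 h_{\eta,x}=1-\frac{b}{a},\qquad h_{\eta,y}=1-\frac{a}{b},
\]
and hence
\[
 D^2h_\eta(x,y)=\frac{1}{2ab}\begin{pmatrix} b^2/a^2 & -1\\ -1 & a^2/b^2\end{pmatrix}
 =\frac{1}{2ab}\,w\,w^{T},\qquad w=\begin{pmatrix} b/a\\ -a/b\end{pmatrix}.
\]
This displays $D^2h_\eta$ as a nonnegative multiple of a rank-one projection. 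Equivalently, the determinant vanishes and the trace is positive. This settles positive semidefiniteness and rank one on $[0,\infty)^2$. Indeed, on $[-\eta/2,\infty)^2$ all entries are smooth and bounded, because $a,b\ge\sqrt{\eta/2}$.

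For the Sobolev composition, I first note that $h_{\eta,x}$ and $h_{\eta,y}$ restricted to $[0,\infty)^2$ have bounded first derivatives there. The entries of $D^2h_\eta$ are bounded by $C\eta^{-1}(1+\text{ratios})$. However, the ratio $b^2/a^2=(y+\eta)/(x+\eta)$ is unbounded in $y$, so the map is only locally Lipschitz.

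I would therefore argue as follows. Since $u,v\in H^1(I)\hookrightarrow C(\bar I)$ are bounded, one can modify $h_{\eta,x}$ and $h_{\eta,y}$ outside the compact rectangle $[0,\|u\|_\infty]\times[0,\|v\|_\infty]$ to obtain a $C^1$ function with bounded gradient. The standard chain rule for Lipschitz $C^1$ compositions of $H^1$ vector functions in one dimension then gives $h_{\eta,x}(u,v)\in V$ with
\[
 \partial_x h_{\eta,x}(u,v)=h_{\eta,xx}\,\partial_xu+h_{\eta,xy}\,\partial_xv,
\]
and similarly for $h_{\eta,y}(u,v)$.

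Next I would insert these into the variational form \eqref{eq:Avar}:
\[
 {}_{V^*}\!\ip{Au}{h_{\eta,x}(u,v)}_V=-\frac12\int_I\partial_xu\,\bigl(h_{\eta,xx}\partial_xu+h_{\eta,xy}\partial_xv\bigr)\dd x,
\]
together with the analogous expression for the $v$ term. Adding the two, and using $h_{\eta,xy}=h_{\eta,yx}$, produces exactly the quadratic form in \eqref{eq:hellinger-heat}. Nonpositivity then follows pointwise from the semidefiniteness. In fact the integrand equals
\[
 \frac{1}{2ab}\Bigl(\frac{b}{a}\partial_xu-\frac{a}{b}\partial_xv\Bigr)^2\ge0.
\]
All integrals are finite, because the Hessian entries are bounded on the range of $(u,v)$ and $\partial_xu,\partial_xv\in L^2$.

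The only mildly delicate point is the chain-rule justification: continuity of $H^1$ functions in one dimension reduces it to a globally Lipschitz situation. No Neumann boundary terms arise, because the variational form of $A$ already encodes them.
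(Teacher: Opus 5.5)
Your proof is correct and follows the same route as the paper. Both show $D^2h_\eta$ is positive semidefinite of rank one, use $H^1(I)\hookrightarrow C(\bar I)$ to localize to a compact range where $h_{\eta,x},h_{\eta,y}$ have bounded derivatives, apply the Sobolev chain rule, and substitute into \eqref{eq:Avar}. Your explicit factorization $D^2h_\eta=(2ab)^{-1}ww^{T}$ is in fact cleaner than the paper's one-line argument, which misattributes the off-diagonal value $-1/(2\sqrt{(x+\eta)(y+\eta)})$ to the diagonal. You should, however, delete the passing claims that the Hessian entries are bounded on $[-\eta/2,\infty)^2$ or $[0,\infty)^2$: you correctly note right afterwards that $(y+\eta)/(x+\eta)$ is unbounded, and only the compact-range localization is actually used.
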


\begin{proof}

The diagonal entries of the Hessian equal $-\frac{1}{2\sqrt{(x+\eta)(y+\eta)}}$ which is
nonnegative, and the determinant is zero, so the first statement holds.  Since $H^1(I)$ embeds into
$C(\overline I)$ in one dimension and $\eta>0$, the first derivatives of
$h_\eta$ have bounded derivatives on the compact range of $(u,v)$.  The
Sobolev chain rule therefore gives
\[
 \partial_x\bigl(h_{\eta,x}(u,v)\bigr)
 =h_{\eta,xx}(u,v)\partial_xu+h_{\eta,xy}(u,v)\partial_xv,
\]
and the analogous identity for $h_{\eta,y}(u,v)$.  Substitution into the
variational definition \eqref{eq:Avar} proves \eqref{eq:hellinger-heat}.
\end{proof}

Next, we consider the perspective identities for every entry of the matrix drift.
Fix $x,y,c,d\ge0$ and put
\begin{equation}\label{eq:UV-def}
 r=\sqrt{x+\eta},\qquad s=\sqrt{y+\eta},
 \qquad
 \alpha=\sqrt{\frac{x}{x+\eta}},\qquad
 \beta=\sqrt{\frac{y}{y+\eta}},
\end{equation}
\begin{equation}\label{eq:UV2-def}
 U=\left(\frac{y+\eta}{x+\eta}\right)^{1/4}\sqrt c,
 \qquad
 V=\left(\frac{x+\eta}{y+\eta}\right)^{1/4}\sqrt d.
\end{equation}
Here $(x,y)$ are two synchronized values in row $i$, while $(c,d)$ may be the synchronized values of any input component $j$.

\begin{lemma}[Off-diagonal perspective, diagonal growth, and immigration]\label{lem:general-drift-hellinger}
For all $x,y,c,d\ge0$,
\begin{equation}\label{eq:offdiag-perspective}
 c\partial_xh_\eta(x,y)+d\partial_yh_\eta(x,y)
 =h(c,d)-|U-V|^2\le h(c,d).
\end{equation}
Moreover,
\begin{equation}\label{eq:diagonal-hellinger}
 x\partial_xh_\eta+y\partial_yh_\eta
 =h_\eta(x,y)\left(1+\frac{\eta}{\sqrt{(x+\eta)(y+\eta)}}\right),
\end{equation}
and therefore
\begin{equation}\label{eq:diagonal-hellinger-bound}
 0\le x\partial_xh_\eta+y\partial_yh_\eta\le2h_\eta(x,y).
\end{equation}
Finally,
\begin{equation}\label{eq:immigration-contract}
 \partial_xh_\eta+\partial_yh_\eta\le0.
\end{equation}
\end{lemma}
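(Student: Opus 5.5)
The lemma is purely algebraic, so the plan is to compute the partial derivatives of $h_\eta$ explicitly in the variables of \eqref{eq:UV-def} and then verify each identity directly. Writing $r=\sqrt{x+\eta}$ and $s=\sqrt{y+\eta}$, we have $h_\eta=(r-s)^2$. Since $\partial_x r=1/(2r)$, this gives
\[
 \partial_xh_\eta=\frac{r-s}{r}=1-\frac sr,\qquad
 \partial_yh_\eta=\frac{s-r}{s}=1-\frac rs .
\]

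\emph{Off-diagonal perspective identity.} We need $c(1-s/r)+d(1-r/s)$. By \eqref{eq:UV2-def}, $U^2=(s/r)c$, $V^2=(r/s)d$, and $UV=\sqrt{cd}$. Therefore
\[
 c\,\partial_xh_\eta+d\,\partial_yh_\eta=c+d-U^2-V^2
 =c+d-2\sqrt{cd}-(U-V)^2=h(c,d)-|U-V|^2 .
\]
This is \eqref{eq:offdiag-perspective}, and the inequality follows at once.

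\emph{Diagonal identity.} Here $x=r^2-\eta$ and $y=s^2-\eta$, so
\[
 x\partial_xh_\eta+y\partial_yh_\eta=(r^2-\eta)\frac{r-s}{r}-(s^2-\eta)\frac{r-s}{s}
 =(r-s)\Bigl[r-s-\eta\Bigl(\frac1r-\frac1s\Bigr)\Bigr].
\]
Since $\frac1r-\frac1s=-(r-s)/(rs)$, the bracket equals $(r-s)\bigl(1+\eta/(rs)\bigr)$. This yields \eqref{eq:diagonal-hellinger} because $rs=\sqrt{(x+\eta)(y+\eta)}$. The bound \eqref{eq:diagonal-hellinger-bound} then follows from $h_\eta\ge0$ together with $0\le\eta/(rs)\le\eta/\eta=1$, which uses $r,s\ge\sqrt\eta$.

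\emph{Immigration contraction.} We have
\[
 \partial_xh_\eta+\partial_yh_\eta=2-\frac sr-\frac rs=-\frac{(r-s)^2}{rs}\le0,
\]
using AM--GM, or equivalently the case $c=d=1$ of the perspective identity, where $h(1,1)=0$.

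No step here presents a real obstacle. The only point requiring care is to keep the sign conventions of $\partial_xh_\eta$ and $\partial_yh_\eta$ consistent, and to note that $r,s>0$ because $\eta>0$, so every quotient is well defined for $x,y\ge0$.
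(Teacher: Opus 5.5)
Your proposal is correct and follows the paper's proof essentially line by line. Both compute $\partial_xh_\eta=1-s/r$ and $\partial_yh_\eta=1-r/s$, complete the square using $U^2=(s/r)c$, $V^2=(r/s)d$, $UV=\sqrt{cd}$, substitute $x=r^2-\eta$, $y=s^2-\eta$ together with $rs\ge\eta$ for the diagonal identity and bound, and observe $2-r/s-s/r\le0$ (you factor out $(r-s)$ first, while the paper groups the diagonal term as $(r-s)^2+\eta(r/s+s/r-2)$, a purely cosmetic difference).
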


\begin{proof}
Using $r,s$ from \eqref{eq:UV-def},
\begin{align*}
 c\partial_xh_\eta+d\partial_yh_\eta
 &=c\left(1-\frac{s}{r}\right)+d\left(1-\frac{r}{s}\right)\\
 &=c+d-2\sqrt{cd}
 -\left(\frac{s}{r}c+\frac{r}{s}d-2\sqrt{cd}\right)\\
 &=h(c,d)-|U-V|^2.
\end{align*}
For the diagonal identity, substitute $x=r^2-\eta$ and $y=s^2-\eta$:
\begin{align*}
 x\left(1-\frac{s}{r}\right)+y\left(1-\frac{r}{s}\right)
 &=(r-s)^2+\eta\left(\frac rs+\frac sr-2\right)\\
 &=(r-s)^2\left(1+\frac{\eta}{rs}\right).
\end{align*}
Since $rs\ge\eta$, \eqref{eq:diagonal-hellinger-bound} follows.  Finally,
\[
 \partial_xh_\eta+\partial_yh_\eta
 =2-\frac rs-\frac sr\le0.
\]
\end{proof}

Every off-diagonal entry has the correct sign.  Only the distinguished input $j=i+1$ must be combined with the covariance trace. Now, we consider the cyclic entry coupled to the noise trace.

\begin{lemma}[Exact regularized catalytic formula]\label{lem:regularized-catalytic}
For every $a,\lambda\ge0$,
\begin{align}\label{eq:regularized-catalytic}
 &a\bigl(c\partial_xh_\eta+d\partial_yh_\eta\bigr)
 +\frac{\lambda}{2}\bigl(
 xc\partial_{xx}h_\eta
 +2\sqrt{xycd}\,\partial_{xy}h_\eta
 +yd\partial_{yy}h_\eta
 \bigr)\notag\\
 &\qquad=a h(c,d)-a|U-V|^2+\frac{\lambda}{4}|\alpha U-\beta V|^2.
\end{align}
\end{lemma}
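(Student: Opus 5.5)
The identity is an exact algebraic computation: it splits into the drift part, already handled by \eqref{eq:offdiag-perspective} of \Cref{lem:general-drift-hellinger}, and a noise-trace part, which I expect to be a perfect square in the variables of \eqref{eq:UV-def}--\eqref{eq:UV2-def}. Throughout I will use $r=\sqrt{x+\eta}$ and $s=\sqrt{y+\eta}$, so that $h_\eta=(r-s)^2$. Since $\partial_x r=1/(2r)$ and $\partial_y s=1/(2s)$, the first derivatives are
\[
 \partial_xh_\eta=1-\frac sr,\qquad \partial_yh_\eta=1-\frac rs,
\]
as in the proof of \Cref{lem:general-drift-hellinger}.

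\emph{First step.} Multiply \eqref{eq:offdiag-perspective} by $a\ge0$. This gives the first two terms on the right of \eqref{eq:regularized-catalytic}, namely $ah(c,d)-a|U-V|^2$. It then remains to show that the $\lambda$-term on the left equals $\frac{\lambda}{4}|\alpha U-\beta V|^2$.

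\emph{Second step.} Differentiating once more gives
\[
 \partial_{xx}h_\eta=\frac{s}{2r^3},\qquad
 \partial_{yy}h_\eta=\frac{r}{2s^3},\qquad
 \partial_{xy}h_\eta=-\frac{1}{2rs}.
\]
Consequently
\[
 \frac{\lambda}{2}\Bigl(xc\,\partial_{xx}h_\eta+2\sqrt{xycd}\,\partial_{xy}h_\eta+yd\,\partial_{yy}h_\eta\Bigr)
 =\frac{\lambda}{4}\Bigl(\frac{xcs}{r^3}-\frac{2\sqrt{xycd}}{rs}+\frac{ydr}{s^3}\Bigr).
\]

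\emph{Third step.} Express these three terms through $\alpha,\beta,U,V$. From \eqref{eq:UV2-def}, $U=\sqrt{s/r}\,\sqrt c$ and $V=\sqrt{r/s}\,\sqrt d$. From \eqref{eq:UV-def}, $\alpha=\sqrt x/r$ and $\beta=\sqrt y/s$. Therefore
\[
 (\alpha U)^2=\frac{xcs}{r^3},\qquad (\beta V)^2=\frac{ydr}{s^3},\qquad
 \alpha U\,\beta V=\frac{\sqrt{xycd}\,\sqrt{rs}}{r^{3/2}s^{3/2}}=\frac{\sqrt{xycd}}{rs}.
\]
The bracket in the second step is therefore exactly $|\alpha U-\beta V|^2$. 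Adding this to the first step proves \eqref{eq:regularized-catalytic}.

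There is no real obstacle here. The only delicate point is the cross term: one must check that the mixed derivative, weighted by $\sqrt{xycd}$ (the product of the two synchronized noise amplitudes $\sqrt{xc}$ and $\sqrt{yd}$), matches $2\alpha U\beta V$. The exponent $1/4$ in the definition of $U$ and $V$ is chosen precisely so that this completes the square. I would verify this bookkeeping of the powers of $r$ and $s$ carefully. All quantities are finite because $r,s\ge\sqrt\eta>0$.
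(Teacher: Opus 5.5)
Your proposal is correct and follows the same route as the paper: the first-order part is \eqref{eq:offdiag-perspective} multiplied by $a$, and the second-order part is the explicit computation $\frac12\bigl(xc\,\partial_{xx}h_\eta+2\sqrt{xycd}\,\partial_{xy}h_\eta+yd\,\partial_{yy}h_\eta\bigr)=\frac14\bigl[\frac{xcs}{r^3}-\frac{2\sqrt{xycd}}{rs}+\frac{ydr}{s^3}\bigr]=\frac14|\alpha U-\beta V|^2$. Your second derivatives and your bookkeeping of powers for $(\alpha U)^2$, $(\beta V)^2$, and $\alpha U\beta V$ are all correct, so you have written out in full the ``elementary calculation'' that the paper leaves implicit.
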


\begin{proof}
The first-order part is \eqref{eq:offdiag-perspective}.  By elementary calculation, we have
\begin{align*}
 &\frac12\bigl(
 xc\partial_{xx}h_\eta
 +2\sqrt{xycd}\,\partial_{xy}h_\eta
 +yd\partial_{yy}h_\eta\bigr)\\
 &\qquad=\frac14\left[
 \frac{xc\,s}{r^3}-2\frac{\sqrt{xycd}}{rs}+\frac{yd\,r}{s^3}
 \right]
 =\frac14|\alpha U-\beta V|^2.
\end{align*}
\end{proof}

The next lemma removes the cyclic boundary error.

\begin{lemma}[Boundary error under the strict gap]\label{lem:boundary-error}
Fix a state bound $R>0$ and use the exponent $p\in(0,1/2)$ chosen in \eqref{eq:p-choice}.  There exists $C_R<\infty$, depending also on the fixed coefficient bounds, such that, for every $0<\eta\le1$, $0\le x,y,c,d\le R$, $0\le\lambda\le\Lambda$, and $a-\lambda/4\ge\delta_*$,
\begin{align}\label{eq:boundary-error}
 &a\bigl(c\partial_xh_\eta+d\partial_yh_\eta\bigr)
 +\frac{\lambda}{2}\bigl(
 xc\partial_{xx}h_\eta
 +2\sqrt{xycd}\,\partial_{xy}h_\eta
 +yd\partial_{yy}h_\eta
 \bigr)\notag\\
 &\qquad\le a h(c,d)
 +C_R\eta^{1/2-p}
 \bigl[c(x+\eta)^{p-1}+d(y+\eta)^{p-1}\bigr].
\end{align}
\end{lemma}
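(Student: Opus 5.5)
The plan is to start from the exact identity of \Cref{lem:regularized-catalytic}. It writes the left-hand side of \eqref{eq:boundary-error} as
\[
a h(c,d)-a|U-V|^2+\frac{\lambda}{4}|\alpha U-\beta V|^2 .
\]
It therefore suffices to show that $\frac{\lambda}{4}|\alpha U-\beta V|^2-a|U-V|^2$ is bounded by the stated error term. The idea is to compare $\alpha U-\beta V$ with $U-V$. At the boundary $\alpha,\beta$ deviate from $1$, but away from it they are close to $1$, and the strict gap absorbs the main part.

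\emph{Step 1 (splitting).} Write $\alpha U-\beta V=(U-V)-\bigl[(1-\alpha)U-(1-\beta)V\bigr]$. For $\epsilon>0$, the elementary inequality $|A+B|^2\le(1+\epsilon)|A|^2+(1+\epsilon^{-1})|B|^2$ gives
\[
\frac{\lambda}{4}|\alpha U-\beta V|^2\le\frac{\lambda}{4}(1+\epsilon)|U-V|^2+\frac{\lambda}{2}(1+\epsilon^{-1})\bigl[(1-\alpha)^2U^2+(1-\beta)^2V^2\bigr].
\]
Choose $\epsilon=4\delta_*/\Lambda$; if $\Lambda=0$ there is nothing to prove. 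Then $\frac{\lambda}{4}(1+\epsilon)\le\frac{\lambda}{4}+\delta_*\le a$ by the hypothesis $a-\lambda/4\ge\delta_*$, so the $|U-V|^2$ contribution is cancelled by $-a|U-V|^2$. What remains is a multiple $C(\Lambda,\delta_*)$ of $(1-\alpha)^2U^2+(1-\beta)^2V^2$.

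\emph{Step 2 (boundary factor).} Since $\alpha=\sqrt{x/(x+\eta)}\in[0,1]$ and $1-\sqrt t\le 1-t$ on $[0,1]$,
\[
0\le1-\alpha\le\min\Bigl\{1,\frac{\eta}{x+\eta}\Bigr\}.
\]
Interpolating with exponent $\vartheta=1/2-p\in(0,1/2)$ gives
\[
(1-\alpha)^2\le\Bigl(\frac{\eta}{x+\eta}\Bigr)^{\vartheta}.
\]
Moreover, by \eqref{eq:UV2-def} and $y\le R$, $\eta\le1$,
\[
U^2=\Bigl(\frac{y+\eta}{x+\eta}\Bigr)^{1/2}c\le\sqrt{R+1}\,c\,(x+\eta)^{-1/2}.
\]
Multiplying these bounds,
\[
(1-\alpha)^2U^2\le\sqrt{R+1}\,\eta^{1/2-p}\,c\,(x+\eta)^{-(1/2-p)-1/2}=\sqrt{R+1}\,\eta^{1/2-p}\,c\,(x+\eta)^{p-1}.
\]
The term $(1-\beta)^2V^2$ is handled symmetrically and gives $\sqrt{R+1}\,\eta^{1/2-p}\,d\,(y+\eta)^{p-1}$.

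\emph{Step 3 (constant).} Collecting the bounds yields \eqref{eq:boundary-error} with
\[
C_R=\frac{\Lambda}{2}\Bigl(1+\frac{\Lambda}{4\delta_*}\Bigr)\sqrt{R+1}.
\]
This constant is independent of $\eta\in(0,1]$. The only delicate point is choosing the interpolation exponent so that the power of $(x+\eta)$ matches exactly the weight $(x+\eta)^{p-1}$ controlled by \Cref{thm:power-main}, while still leaving a positive power of $\eta$. The choice $\vartheta=1/2-p>0$ does both, and it uses $p<1/2$, i.e.\ the strict gap.
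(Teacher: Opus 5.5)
Your proposal is correct and follows essentially the paper's proof. You use the same splitting $\alpha U-\beta V=(U-V)-[(1-\alpha)U-(1-\beta)V]$, a Young inequality tuned so that the gap $a-\lambda/4\ge\delta_*$ absorbs the $|U-V|^2$ part, and then a bound of $(1-\alpha)^2U^2$ by $C_R\,\eta^{1/2-p}c(x+\eta)^{p-1}$. The differences are only cosmetic: you use the sharper estimate $1-\alpha\le\eta/(x+\eta)$ and interpolate directly, whereas the paper uses $1-\alpha\le\sqrt{\eta/(x+\eta)}$ followed by $\eta(x+\eta)^{-3/2}\le\eta^{1/2-p}(x+\eta)^{p-1}$. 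You also spend the full gap rather than half of it, which is what produces your explicit constant.
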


\begin{proof}
Set $L=\lambda/4$.

Choose $\zeta>0$, depending only on $\delta_*$ and $\Lambda$, as follows.
If $\Lambda=0$, take $\zeta=1$ (then $L=0$); if $\Lambda>0$, take
$\zeta=\min\{1,2\delta_*/\Lambda\}$.  Since $L\le\Lambda/4$ and
$a-L\ge\delta_*$, this choice gives $L\zeta\le\delta_*/2$ and hence

\[
 L(1+\zeta)\le a-\frac{\delta_*}{2}.
\]
Write
\[
 \alpha U-\beta V=(U-V)-\bigl[(1-\alpha)U-(1-\beta)V\bigr].
\]
Using $|r-s|^2\le(1+\zeta)|r|^2+(1+\zeta^{-1})|s|^2$ in \eqref{eq:regularized-catalytic} gives
\begin{align}\label{eq:error-young}
 L|\alpha U-\beta V|^2-a|U-V|^2
 &\le-\frac{\delta_*}{2}|U-V|^2\notag\\
 &\quad+C\bigl[(1-\alpha)^2U^2+(1-\beta)^2V^2\bigr].
\end{align}
The negative term may be discarded.  Since
\[
 1-\alpha\le\sqrt{\frac{\eta}{x+\eta}},
 \qquad U^2=c\sqrt{\frac{y+\eta}{x+\eta}},
\]
state localization yields
\[
 (1-\alpha)^2U^2\le C_R\frac{\eta c}{(x+\eta)^{3/2}}.
\]
Because $p<1/2$ and $x+\eta\ge\eta$,
\[
 \frac{\eta}{(x+\eta)^{3/2}}
 \le\eta^{1/2-p}(x+\eta)^{p-1}.
\]
The same estimate holds for the $\beta V$ term.  Substitution into \eqref{eq:error-young} proves \eqref{eq:boundary-error}.
\end{proof}

Let $u$ and $v$ be synchronized nonnegative WV solutions.  By \Cref{prop:spatial-continuity}, both have continuous $C(\overline I)^K$-valued paths.

For the deterministic initial states in \Cref{thm:stability}, put
\[
 R_0=\max_{1\le i\le K}\bigl(
 \|u_i(0)\|_{C(\overline I)}+\|v_i(0)\|_{C(\overline I)}\bigr),
\]
and, for $R>R_0$, define

\begin{equation}\label{eq:tauR}
 \tau_R=\inf\left\{t\ge0:
 \max_i\bigl(\|u_i(t)\|_{C(\overline I)}+\|v_i(t)\|_{C(\overline I)}\bigr)\ge R
 \right\}\wedge T,
\end{equation}
and define
\begin{equation}\label{eq:Heta}
 \cH_\eta(t)=\sum_{i=1}^K\int_Ih_\eta(u_i(t,x),v_i(t,x))\dd x.
\end{equation}

\begin{proposition}[Stopped Hellinger inequality]\label{prop:stopped-hellinger}

For every $R>R_0$ and $t\le T$,

\begin{align}\label{eq:stopped-hellinger}
 \E\cH_\eta(t\wedge\tau_R)
 &\le\E\cH_\eta(0)+\E\int_0^{t\wedge\tau_R}\!\int_I
 \sum_{i=1}^K2q_{ii}^{+}h_\eta(u_i,v_i)\dd x\dd s\notag\\
 &\quad+\E\int_0^{t\wedge\tau_R}\!\int_I
 \sum_{i=1}^K\sum_{j\ne i}q_{ij}h(u_j,v_j)\dd x\dd s\\
 &\quad+C_R\eta^{1/2-p}\E\int_0^{t\wedge\tau_R}\!\int_I
 \sum_{i=1}^K\bigl[u_{i+1}(u_i+\eta)^{p-1}
 +v_{i+1}(v_i+\eta)^{p-1}\bigr]\dd x\dd s.\notag
\end{align}
\end{proposition}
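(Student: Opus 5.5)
We apply a two-component variational It\^o formula to the functional
\[
 \cH_\eta(t)=\sum_{i=1}^K\int_I h_\eta(u_i(t),v_i(t))\,\dd x,
\]
stopped at $\tau_R$. Then we bound the resulting drift pointwise, using \Cref{lem:hellinger-convex,lem:general-drift-hellinger,lem:boundary-error}.

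\emph{Applicability of It\^o's formula.} Since $\eta>0$ and, before $\tau_R$, all values lie in $[0,R]$, the function $h_\eta$ has bounded first and second derivatives on the relevant range. We extend it outside $[0,R]^2$ to a $C^2$ function with bounded, uniformly continuous second derivatives. The variational It\^o formula for $\int_I F(X,Y)\,\dd x$ then applies to the pair $(u_i,v_i)$, each with drift $Au+(\Qop u)_i+\theta_i$ and noise $G_i(u)\,\dd W_i$, $G_i(v)\,\dd W_i$; this is the two-component analogue of \Cref{lem:variational-ito}. Because the two solutions are synchronized, they share $W_i$, and the trace term is
\[
 \tfrac12\sum_m\int_I\sigma_{im}^2\Bigl[u_iu_{i+1}\partial_{xx}h_\eta+2\sqrt{u_iu_{i+1}v_iv_{i+1}}\,\partial_{xy}h_\eta+v_iv_{i+1}\partial_{yy}h_\eta\Bigr]\dd x .
\]
By Tonelli, summing over $m$ replaces $\sigma_{im}^2$ by $\lambda_i$. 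The stochastic integral has bounded integrand $\partial h_\eta\cdot G_i$ up to $\tau_R$ and $\E\int_0^T\|G_i\|_{L_2}^2<\infty$, so it is a true martingale with mean zero.

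\emph{Pointwise bounds on the drift.} For each row $i$ we treat the terms separately.
\begin{itemize}
\item The heat pairing is $\le0$ by \eqref{eq:hellinger-heat}.
\item The diagonal term is $q_{ii}(u_i\partial_xh_\eta+v_i\partial_yh_\eta)$. By \eqref{eq:diagonal-hellinger-bound} it lies between $0$ and $2h_\eta$, hence is at most $2q_{ii}^+h_\eta(u_i,v_i)$.
\item Immigration contributes $\theta_i(\partial_xh_\eta+\partial_yh_\eta)\le0$ by \eqref{eq:immigration-contract}.
\item For $j\notin\{i,i+1\}$, the term $q_{ij}(u_j\partial_xh_\eta+v_j\partial_yh_\eta)$ is at most $q_{ij}h(u_j,v_j)$ by \eqref{eq:offdiag-perspective}, since $q_{ij}\ge0$.
\item The cyclic entry $j=i+1$ is grouped with the trace. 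We apply \Cref{lem:boundary-error} pointwise with $a=q_{i,i+1}(x)$, $\lambda=\lambda_i(x)\le\Lambda$, $(x,y,c,d)=(u_i,v_i,u_{i+1},v_{i+1})\in[0,R]^4$; \Cref{ass:strict-gap} gives $a-\lambda/4\ge\delta_*$ a.e. The result is the bound
\[
 q_{i,i+1}h(u_{i+1},v_{i+1})+C_R\eta^{1/2-p}\bigl[u_{i+1}(u_i+\eta)^{p-1}+v_{i+1}(v_i+\eta)^{p-1}\bigr].
\]
\end{itemize}

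\emph{Conclusion.} Summing over $i$, integrating in $x$ and over $[0,t\wedge\tau_R]$, and taking expectations yields \eqref{eq:stopped-hellinger}. The cyclic $h$-term merges with the other off-diagonal ones into $\sum_{j\ne i}q_{ij}h(u_j,v_j)$.

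\emph{Main obstacle.} The delicate step is justifying the two-component variational It\^o formula for the nonlinear, spatially integrated functional of two merely WV solutions driven by infinitely many modes. Specifically, we must check that $h_{\eta,x}(u,v)$ and $h_{\eta,y}(u,v)$ lie in $L^2(0,\tau_R;V)$, and that the mixed covariation is $\sum_m\sigma_{im}^2\sqrt{u_iu_{i+1}}\sqrt{v_iv_{i+1}}$. We would handle this by a Galerkin/mollification approximation in $x$, as in Krylov's It\^o formula for $\int F(u)$. The approximation step uses the following controls:
\begin{itemize}
\item the $C(\bar I)$ path continuity of \Cref{prop:spatial-continuity}, which underlies the stopping time;
\item the energy bound of \Cref{prop:energy-estimate};
\item the chain rule of \Cref{lem:hellinger-convex}, which controls the $V$-norms of the compositions.
\end{itemize}
With these, every term converges by dominated convergence.
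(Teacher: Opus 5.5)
Your proposal is correct and follows essentially the same route as the paper. You extend $h_\eta$ off $[0,R]^2$ (the paper does this explicitly with a cutoff $\kappa_{R,\eta}$ valued in $[-\eta/2,R+1]$), apply the two-component variational It\^o formula with the common-noise mixed trace, use that the stopped martingale has mean zero, and bound the heat, diagonal, off-diagonal, immigration and cyclic-plus-trace terms by \Cref{lem:hellinger-convex,lem:general-drift-hellinger,lem:boundary-error}. The It\^o-formula justification you flag as the main obstacle is exactly what the paper's appendix two-component lemma supplies, via Neumann spectral (Galerkin) projections.
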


\begin{proof}
Fix $R>R_0$ and $\eta$.
Choose a smooth map
$\kappa_{R,\eta}:\R\to[-\eta/2,R+1]$ that equals the identity on an open neighborhood of $[0,R]$, and define
\[
 \widehat h_{\eta,R}(x,y)
 =\bigl(\sqrt{\kappa_{R,\eta}(x)+\eta}
       -\sqrt{\kappa_{R,\eta}(y)+\eta}\bigr)^2.
\]
Then $\widehat h_{\eta,R}\in C^2(\R^2)$ has bounded gradient and Hessian, its Hessian is uniformly continuous, and it agrees together with all first and second derivatives with $h_\eta$ on $[0,R]^2$.  Since every stopped value $(u_i,v_i)$ lies in that rectangle, the two-component variational It\^o formula in \Cref{lem:variational-ito} may be applied with $\widehat h_{\eta,R}$, after which all derivatives can be written as derivatives of $h_\eta$.

Summing the rowwise formulas gives the following full stopped identity, before any sign is discarded:
\begin{align}\label{eq:full-stopped-hellinger-identity}
 &\cH_\eta(t\wedge\tau_R)
 =\cH_\eta(0)
 -\frac12\sum_{i=1}^K\int_0^{t\wedge\tau_R}\!\int_I
 \begin{pmatrix}\partial_xu_i\\ \partial_xv_i\end{pmatrix}^{\!T}
 D^2h_\eta(u_i,v_i)
 \begin{pmatrix}\partial_xu_i\\ \partial_xv_i\end{pmatrix}
 \,\dd x\dd s\notag\\
 &\quad+\sum_{i=1}^K\int_0^{t\wedge\tau_R}\!\int_I
 \Biggl[
 h_{\eta,x}(u_i,v_i)
 \left(\sum_{j=1}^Kq_{ij}u_j+\theta_i\right)
 +h_{\eta,y}(u_i,v_i)
 \left(\sum_{j=1}^Kq_{ij}v_j+\theta_i\right)
 \Biggr]\,\dd x\dd s\notag\\
 &\quad+\frac12\sum_{i=1}^K\int_0^{t\wedge\tau_R}\!\int_I\lambda_i
 \Bigl[
 u_i u_{i+1}h_{\eta,xx}(u_i,v_i)
 +2\sqrt{u_i u_{i+1}v_i v_{i+1}}\,h_{\eta,xy}(u_i,v_i)\notag\\
 &\hspace{13.6em}
 +v_i v_{i+1}h_{\eta,yy}(u_i,v_i)
 \Bigr] \,\dd x\dd s
 +\mathcal M_{\eta,R}(t),
\end{align}
where
\begin{align}\label{eq:hellinger-martingale}
 \mathcal M_{\eta,R}(t)
 =\sum_{i=1}^K\sum_{m=1}^{\infty}
 \int_0^{t\wedge\tau_R}
 \Biggl\{\int_I\sigma_{im}\Bigl[
 &h_{\eta,x}(u_i,v_i)\sqrt{u_i u_{i+1}}\notag\\
 &+h_{\eta,y}(u_i,v_i)\sqrt{v_i v_{i+1}}
 \Bigr]\,\dd x\Biggr\}\,\dd\beta_{im}(s).
\end{align}
The mixed term in the third line of
\eqref{eq:full-stopped-hellinger-identity} comes from the common Brownian coordinate $\beta_{im}$ in the two synchronized equations.  The complete mode sum is legitimate because the product formula gives the absolutely convergent estimate
\begin{align*}
 &\sum_m\int_I
 \left|h_{\eta,xy}(u_i,v_i)\sigma_{im}^2
 \sqrt{u_i u_{i+1}v_i v_{i+1}}\right|\,\dd x\\
 &\qquad\le \|h_{\eta,xy}\|_{L^\infty([0,R]^2)}
 \|\lambda_i\|_\infty
 \int_I\sqrt{u_i u_{i+1}v_i v_{i+1}}\,\dd x<\infty
\end{align*}
under the stopping.  The two diagonal trace sums are handled identically.

The stopped martingale is a true square-integrable martingale.  Indeed, with
$C_{\eta,R}=\|\nabla\widehat h_{\eta,R}\|_\infty$, its bracket satisfies
\begin{align}\label{eq:hellinger-bracket}
 \E\langle\mathcal M_{\eta,R}\rangle_T
 &\le2C_{\eta,R}^2|I|
 \sum_{i=1}^K\E\int_0^{T\wedge\tau_R}
 \Bigl(\|G_i(u)\|_{L_2(U_i,H)}^2
       +\|G_i(v)\|_{L_2(U_i,H)}^2\Bigr)\,\dd s\notag\\
 &\le C_{\eta,R,T}\left(
 1+\E\int_0^T\|u(s)\|_{H^K}^2+\|v(s)\|_{H^K}^2\,\dd s\right)<\infty,
\end{align}
where \eqref{eq:G-HS} and \Cref{prop:energy-estimate} were used.  Hence
$\E\mathcal M_{\eta,R}(t)=0$.

We now estimate the deterministic terms in
\eqref{eq:full-stopped-hellinger-identity}.  The heat term is nonpositive by
\Cref{lem:hellinger-convex}.  The diagonal drift pair satisfies
\[
 q_{ii}\bigl(u_i h_{\eta,x}+v_i h_{\eta,y}\bigr)
 \le2q_{ii}^{+}h_\eta(u_i,v_i)
\]
by \eqref{eq:diagonal-hellinger-bound}.  For $j\ne i,i+1$, the Metzler condition and \eqref{eq:offdiag-perspective} give
\[
 q_{ij}\bigl(u_jh_{\eta,x}+v_jh_{\eta,y}\bigr)
 \le q_{ij}h(u_j,v_j).
\]
The cyclic pair $q_{i,i+1}(u_{i+1},v_{i+1})$, together with the complete trace in its row, is bounded by \Cref{lem:boundary-error}.  Immigration is nonpositive by \eqref{eq:immigration-contract}.  Taking expectations in the full identity, using the zero mean of the martingale, and then summing these estimates proves
\eqref{eq:stopped-hellinger}.  No spatial Galerkin approximation or positivity-preserving projection is used.
\end{proof}

\begin{proof}[Proof of \Cref{thm:stability}]
The power entropy bounds the last line of \eqref{eq:stopped-hellinger} by $C_{R,T}\eta^{1/2-p}$, which tends to zero because $p<1/2$.  Moreover,
\begin{equation}\label{eq:heta-domination}
 0\le h_\eta(x,y)\le x+y,
 \qquad h_\eta(x,y)\longrightarrow h(x,y).
\end{equation}

Because $R>R_0$ and the stopped $C(\overline I)^K$-valued paths are
continuous, at every $s\le T$,
\[
 0\le \cH_\eta(s\wedge\tau_R)
 \le \sum_{i=1}^K\int_I(u_i+v_i)(s\wedge\tau_R)\,\dd x
 \le K|I|R.
\]
Likewise, on $\{s\le\tau_R\}$ every $h_\eta(u_i,v_i)$ and
$h(u_i,v_i)$ is bounded by $u_i+v_i\le R$ pointwise.  After multiplication
by the bounded coefficients and integration over $[0,T]\times I$, this
gives deterministic integrable dominators for all Hellinger terms in the
stopped inequality.  Thus the family used in the passage
$\eta\downarrow0$ is uniformly integrable; the power-entropy estimate
handles the remaining boundary-error term.

Dominated convergence therefore gives
\begin{align}\label{eq:pre-gronwall-matrix}
 \E\cH(u(t\wedge\tau_R),v(t\wedge\tau_R))
 &\le\E\cH(u(0),v(0))\notag\\
 &\quad+\E\int_0^t\one_{\{s\le\tau_R\}}\int_I
 \sum_{j=1}^K
 \left(2q_{jj}^{+}+\sum_{i\ne j}q_{ij}\right)
 h(u_j,v_j)\dd x\dd s.
\end{align}
The reindexing is exact:
\[
 \sum_i\sum_{j\ne i}q_{ij}h(u_j,v_j)
 =\sum_j\left(\sum_{i\ne j}q_{ij}\right)h(u_j,v_j).
\]
For every $s$ one has the elementary pointwise inequality
\begin{equation}\label{eq:stopped-indicator-domination}
 \one_{\{s\le\tau_R\}}\cH(u(s),v(s))
 \le\cH(u(s\wedge\tau_R),v(s\wedge\tau_R)).
\end{equation}
By the definition of
$\mathfrak q_{\mathrm H}$, \eqref{eq:pre-gronwall-matrix} and
\eqref{eq:stopped-indicator-domination} therefore give
\[
 \E\cH(u(t\wedge\tau_R),v(t\wedge\tau_R))
 \le\E\cH(u(0),v(0))
 +\mathfrak q_{\mathrm H}\int_0^t
 \E\cH(u(s\wedge\tau_R),v(s\wedge\tau_R))\dd s.
\]
Gronwall's lemma yields
\[
 \E\cH(u(t\wedge\tau_R),v(t\wedge\tau_R))
 \le e^{\mathfrak q_{\mathrm H}t}\E\cH(u(0),v(0)).
\]

The stopping times increase to $T$ almost surely.  More explicitly, for
every $R>R_0$,
\[
 \cH\bigl(u(t\wedge\tau_R),v(t\wedge\tau_R)\bigr)
 \le |I|^{1/2}\sum_{i=1}^K\left(
   \sup_{s\le T}\|u_i(s)\|_H+\sup_{s\le T}\|v_i(s)\|_H\right).
\]
The random variable on the right is integrable by the energy estimate and
is independent of $R$.  The same total-mass dominator, integrated in time,
controls the Gronwall integrands.  Dominated convergence as $R\to\infty$
therefore proves \eqref{eq:stability-main}.

If the initial states agree, then the expected Hellinger distance is zero at every fixed time.  On one event of probability one, this holds at every rational time.  Since $h(r,s)=0$ if and only if $r=s$ and both paths are continuous in $H^K$, equality extends from the rational times to all $t\in[0,T]$.  Thus the two processes are indistinguishable.
\end{proof}

\section{Compactness and identification of weak limits}\label{sec:compactness}

There are two distinct limiting procedures.  The mode cutoff has already been removed for each fixed boundary regularization in \Cref{sec:approx}.  In this section only $n\to\infty$ is considered.  The state and the complete countable noise are kept together so that two coordinates of a joint subsequential limit are driven by the same limiting Wiener process.

Fix $T>0$ and set
\begin{equation}\label{eq:path-space}
 \cX_T=L^2(0,T;H^K)\cap C([0,T];H_A^{-2}(I)^K),
 \qquad
 \cW_T=C([0,T];\mathbb U_0),
\end{equation}
with their sum norms.  To verify the assertion used below, identify
$\cX_T$ with the set of pairs $(x,y)$ in
\[
 L^2(0,T;H^K)\times C([0,T];H_A^{-2}(I)^K)
\]
whose images agree in $L^2(0,T;H_A^{-2}(I)^K)$.  This is the kernel of a
continuous linear map and hence a closed subspace of a product of Polish
spaces.  Thus $\cX_T$ is Polish; $\cW_T$ is Polish as a continuous-path
space over the separable Hilbert space $\mathbb U_0$.  The duality pairing
of a path in $C([0,T];H_A^{-2})$ with a test function in $D(A)=H_A^2$ is
continuous.  This is the pairing used throughout the compactness and
martingale-identification arguments below.

\begin{proposition}[Tightness of the state--noise laws]\label{prop:tightness}
The  joint laws of $(u^n,W)$ are tight on $\cX_T\times\cW_T$.
\end{proposition}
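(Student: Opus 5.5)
Tightness of the joint laws reduces to tightness of each marginal, since a product of compact sets is compact in $\cX_T\times\cW_T$. The noise marginal is a single fixed law on the Polish space $\cW_T$ (the law of $W$ does not depend on $n$), hence tight by Ulam's theorem. All the work therefore lies in showing that the laws of $u^n$ are tight on $\cX_T=L^2(0,T;H^K)\cap C([0,T];H_A^{-2}(I)^K)$.

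For the state I will use an Aubin--Lions--Simon type compactness criterion. Fix $\delta\in(0,1/2)$ admissible in \Cref{prop:time-increments} (take $q>2$ large so that $\delta<(q-1)/(2q)$), and for $R>0$ set
\[
 \mathcal K_R=\Bigl\{z:\ \int_0^T\|z\|_{V^K}^2\,\dd t\le R,\ \sup_{t\le T}\|z(t)\|_{H^K}\le R,\ \|z\|_{C^\delta([0,T];H^{-2}(I)^K)}\le R\Bigr\}.
\]
I claim $\mathcal K_R$ is relatively compact in $\cX_T$. The embeddings $V\hookrightarrow H\hookrightarrow H_A^{-2}$ are such that the first is compact (Rellich on the bounded interval) and the second continuous. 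Simon's theorem, using a bound in $L^2(0,T;V^K)$ together with the uniform H\"older bound in $H^{-2}$, gives relative compactness in $L^2(0,T;H^K)$. For the $C([0,T];H^{-2})$ component I use Arzel\`a--Ascoli: equicontinuity comes from the H\"older seminorm, and pointwise relative compactness from the uniform $H^K$ bound, since $H\hookrightarrow H_A^{-2}$ is compact (the eigenvalue weights $(1+\nu_k)^{-2}\to0$). Because $\cX_T$ carries the sum norm, and both limits identify the same element in $L^2(0,T;H^{-2})$, a common subsequence converges in $\cX_T$.

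It then remains to check $\sup_n\Prob(u^n\notin\mathcal K_R)\to0$ as $R\to\infty$. The first two conditions follow from \Cref{prop:energy-estimate} with $q=1$ and Chebyshev. The third follows from \Cref{prop:time-increments}: the Kolmogorov/Garsia--Rodemich--Rumsey argument yields $\sup_n\E\|u^n\|_{C^\delta(H^{-2})}^{2q}$ bounded after adding $\|u^n(0)\|_{H^{-2}}$, which is uniformly bounded since $u_0^n=u_0+n^{-1}\mathbf1$. Chebyshev again gives the bound. Combining the marginal tightness of $u^n$ and of $W$ gives tightness of the joint laws.

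The only delicate point is the compactness lemma itself, specifically verifying Simon's criterion with the weak space $H^{-2}$ and the intersection topology on $\cX_T$. I would cite Simon's theorem (bounded in $L^p(0,T;X)$, $X\Subset B\subset Y$, and equicontinuous in $Y$ implies relatively compact in $L^p(0,T;B)$) rather than reprove it. The probabilistic estimates are already available from the earlier propositions.
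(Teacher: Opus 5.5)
Your proposal is correct and follows essentially the same route as the paper. Both reduce joint tightness to the two marginals, note that the law of $W$ is a fixed law on the Polish space $\cW_T$, and get compact sets in $\cX_T$ from the uniform energy bounds, the Kolmogorov H\"older control in $H_A^{-2}$ from \Cref{prop:time-increments}, and the Simon/Arzel\`a--Ascoli criterion of \Cref{lem:compactness-appendix}; your explicit sets $\mathcal K_R$ with Chebyshev bounds simply spell out the paper's ``compact subsets carrying arbitrarily large probability.''
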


\begin{proof}
The uniform bounds in
$L^2(0,T;V^K)$ and $L^\infty(0,T;H^K)$ follow from \Cref{prop:energy-estimate}.  The increment estimate \eqref{eq:time-increments} and Kolmogorov's criterion give, with arbitrarily high probability, a common H\"older modulus in $H_A^{-2}$.  The deterministic compactness statement \Cref{lem:compactness-appendix}, based on $V^K\Subset H^K\Subset H_A^{-2}(I)^K$ and Simon's compactness theorem \cite{Simon1987}, then gives compact subsets of $\cX_T$ carrying arbitrarily large probability.

The partial sums of $W$ are Cauchy in
$L^2(\Omega;C([0,T];\mathbb U_0))$, because
\[
 \E\sup_{t\le T}\left\|\sum_{m=N}^M\beta_{im}(t)f_m\right\|_{U_{i,0}}^2
 \le4T\sum_{m=N}^M(1+m)^{-2s_0}.
\]
Thus the law of $W$ is a fixed Radon probability measure on $\cW_T$.  Tightness of the joint laws follows from tightness of the two marginals.
\end{proof}

For $z\ge0$, retain the notation $G_i(z)$ from \eqref{eq:G-operator} and $G_i^n(z)$ from \eqref{eq:Gn-operator}.

\begin{lemma}[Full Hilbert--Schmidt coefficient convergence]\label{lem:coefficient-convergence}
Let $z^n,z\ge0$, suppose
$z^n\to z$ in $L^2(0,T;H^K)$.  Then
\begin{equation}\label{eq:B-convergence}
 G_i^n(z^n)\longrightarrow G_i(z)
 \quad\text{in }L^2\bigl(0,T;L_2(U_i,H)\bigr)
\end{equation}
for every $i$.  If the state convergence holds in probability and the relevant $L^2$ norms have a uniformly bounded moment of order greater than two, then \eqref{eq:B-convergence} holds in probability and its squared errors are uniformly integrable.
\end{lemma}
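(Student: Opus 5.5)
The Hilbert--Schmidt norm of the difference reduces to a scalar integral. Since both operators act mode-wise by multiplication with $\sigma_{im}$, Tonelli and \eqref{eq:covkernel} give
\[
 \|G_i^n(z^n)-G_i(z)\|_{L_2(U_i,H)}^2
 =\int_I\lambda_i\bigl|g_n(z_i^n)g_n(z_{i+1}^n)-\sqrt{z_iz_{i+1}}\bigr|^2\dd x .
\]
Because $\|\lambda_i\|_\infty\le\Lambda$, it suffices to show that
\[
 \int_0^T\!\!\int_I\bigl|g_n(z_i^n)g_n(z_{i+1}^n)-\sqrt{z_iz_{i+1}}\bigr|^2\dd x\dd t\to0 .
\]

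We split the integrand by the triangle inequality into two pieces. The first is $g_n(z_i^n)g_n(z_{i+1}^n)-\sqrt{z_i^nz_{i+1}^n}$. By \eqref{eq:product-approx}, its square is at most $2\eps_n(z_i^n+z_{i+1}^n)$. Its integral is therefore bounded by $C\eps_n\|z^n\|_{L^2(0,T;H^K)}$, and this tends to zero because the sequence converges and hence is bounded.

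The second piece is $\sqrt{z_i^nz_{i+1}^n}-\sqrt{z_iz_{i+1}}$. The map $(a,b)\mapsto\sqrt{ab}$ is only $1/2$-H\"older near the faces, so we use
\[
 \bigl|\sqrt{ab}-\sqrt{cd}\bigr|^2\le 2b\,|\sqrt a-\sqrt c|^2+2c\,|\sqrt b-\sqrt d|^2
 \le 2b|a-c|+2c|b-d|,
\]
together with $|\sqrt a-\sqrt c|^2\le|a-c|$ for nonnegative reals. By Cauchy--Schwarz in $(t,x)$, the integral of this piece is bounded by
\[
 2\|z_{i+1}^n\|_{L^2}\|z_i^n-z_i\|_{L^2}+2\|z_i\|_{L^2}\|z_{i+1}^n-z_{i+1}\|_{L^2},
\]
where all norms are taken in $L^2((0,T)\times I)$. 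This also tends to zero. Together the two pieces prove \eqref{eq:B-convergence} deterministically, with an explicit bound
\[
 E_n\le C\bigl(\eps_n X_n+X_n^{1/2}\|z^n-z\|+\|z\|\,\|z^n-z\|\bigr),
\]
where $E_n$ is the squared error and $X_n=\|z^n\|+\|z\|$.

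For the probabilistic statement, we apply this bound pathwise. The right-hand side converges to zero in probability whenever $z^n\to z$ in probability in $L^2(0,T;H^K)$. The subsequence characterization of convergence in probability can be used here if that is cleaner. For uniform integrability we use the crude bound $E_n\le C(\|z^n\|_{L^2}^2+\|z\|_{L^2}^2)$, which follows from $p_n(r)\le r$ and \eqref{eq:G-HS}, \eqref{eq:Gn-HS}. The assumed uniform moment of order $2+\kappa$, with $\kappa>0$, for these norms then bounds $\sup_n\E E_n^{1+\kappa/2}$, and de la Vallée-Poussin gives uniform integrability. Consequently $\E E_n\to0$ as well.

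The only delicate point is the non-Lipschitz square root near zero. The elementary inequality above, which trades $\sqrt{\cdot}$ differences for linear differences weighted by the other factor, avoids needing any boundary-occupation estimate here.
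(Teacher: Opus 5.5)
Your proof is correct and is essentially the paper's argument: the same split through $G_i(z^n)$, the bound \eqref{eq:product-approx} for the regularization error, the inequality $|\sqrt{ab}-\sqrt{cd}|^2\le 2b|a-c|+2c|b-d|$ with Cauchy--Schwarz for the state error, and uniform integrability (your de la Vall\'ee-Poussin step is the paper's ``Vitali'') for the probabilistic part. The only slip is harmless: in your summary bound the middle term should be $X_n\|z^n-z\|$, not $X_n^{1/2}\|z^n-z\|$.
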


\begin{proof}
By \eqref{eq:product-approx},
\begin{align*}
 &\int_0^T\|G_i^n(z^n)-G_i(z^n)\|_{L_2(U_i,H)}^2\,\dd t\\
 &\qquad\le2\eps_n\|\lambda_i\|_\infty
 \int_0^T\!\int_I(z_i^n+z_{i+1}^n)\,\dd x\dd t\longrightarrow0.
\end{align*}
For $a,b,c,d\ge0$,
\[
 |\sqrt{ab}-\sqrt{cd}|^2\le2b|a-c|+2c|b-d|.
\]
Therefore,
\begin{align*}
 &\int_0^T\|G_i(z^n)-G_i(z)\|_{L_2(U_i,H)}^2\,\dd t\\
 &\quad\le2\|\lambda_i\|_\infty\left[
 \|z_{i+1}^n\|_{L^2(0,T;H)}\|z_i^n-z_i\|_{L^2(0,T;H)}
 +\|z_i\|_{L^2(0,T;H)}\|z_{i+1}^n-z_{i+1}\|_{L^2(0,T;H)}
 \right],
\end{align*}
which tends to zero.  The probabilistic assertion follows from these bounds and Vitali's theorem.
\end{proof}

\begin{proposition}[Identification of every subsequential limit]\label{prop:limit-identification}
Suppose a subsequence of $(u^n,W)$ converges in law on $\cX_T\times\cW_T$ to $(u,W)$.  Relative to the completed, right-continuous canonical filtration generated jointly by $(u,W)$:
\begin{enumerate}[label=\textup{(\roman*)},leftmargin=2.2em]
\item the coordinates of $W$ are mutually independent Brownian motions, and $W_i$ is a cylindrical Wiener process on $U_i$;
\item $u$ is a probabilistically weak WV solution of \eqref{eq:spde} driven by this same $W$;
\item $u$ satisfies \eqref{eq:main-energy} and \eqref{eq:power-main};
\item $u$ has continuous paths in both $H^K$ and $C(\overline I)^K$.
\end{enumerate}
\end{proposition}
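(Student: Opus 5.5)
The plan is the standard martingale-identification route, with the state and the complete noise kept in one joint limit. First, I will apply the Skorokhod representation theorem (Jakubowski's version is not needed, since $\cX_T\times\cW_T$ is Polish). This gives a new probability space carrying $(\tilde u^n,\tilde W^n)\to(\tilde u,\tilde W)$ almost surely in $\cX_T\times\cW_T$, with $(\tilde u^n,\tilde W^n)$ equal in law to $(u^n,W)$. Two properties transfer through equality of laws: $\tilde u^n\ge0$, and the bounds of \Cref{prop:energy-estimate}, \Cref{prop:spatial-continuity} and \Cref{prop:power-approx}, since they are expressed through measurable functionals of the path. Lower semicontinuity and Fatou then give $\tilde u\ge0$, the energy bound \eqref{eq:main-energy}, and, via $L^2$-convergence along a further a.e. subsequence, the power entropy \eqref{eq:power-main}. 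For (i), each $\tilde W^n$ has the law of $W$, so the finite-dimensional coordinates $\beta_{im}=\langle \tilde W_i,f_m\rangle$ are independent Brownian motions. Independence of increments from the joint past of $(\tilde u^n,\tilde W^n)$ holds because $u^n$ is $\cF^W$-adapted (\Cref{prop:remove-amplitude}). This is a statement about laws, so it passes to the limit by testing against bounded continuous functionals of the past. Completion and right-continuity of the filtration are then standard.

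For (ii), fix $\varphi\in D(A)$ and define
\[
M^n_i(t)=\ip{\tilde u^n_i(t)}{\varphi}-\ip{u^n_{i,0}}{\varphi}-\int_0^t\ip{\tilde u_i^n}{A\varphi}\,\dd s-\int_0^t\ip{\textstyle\sum_jq_{ij}\tilde u^n_j+\theta_i}{\varphi}\,\dd s,
\]
together with the limiting analogue $M_i$. Equality of laws shows that $M^n_i$, $(M^n_i)^2-\int_0^t\|G_i^n(\tilde u^n)^*\varphi\|_{U_i}^2\dd s$ and $M^n_i\beta^n_{im}-\int_0^t\ip{\sigma_{im}g_n(\tilde u^n_i)g_n(\tilde u^n_{i+1})}{\varphi}\dd s$ are martingales with respect to the joint filtration. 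Cross-variations between different rows vanish. The convergence $\tilde u^n\to\tilde u$ in $C([0,T];H^{-2})\cap L^2(0,T;H^K)$, combined with \Cref{lem:coefficient-convergence} and uniform integrability from the moments of order greater than two, lets us pass to the limit in expectations of the form $\E[(\text{increment})\Psi(\text{past})]$. Hence $M_i$ is a martingale with bracket $\int_0^t\|G_i(\tilde u)^*\varphi\|^2\dd s$ and cross-variation $\int_0^t\ip{G_i(\tilde u)f_m}{\varphi}\dd s$ with $\beta_{im}$.

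The standard computation $\E\bigl\langle M_i-\int_0^\cdot\ip{G_i(\tilde u)^*\varphi}{\dd \tilde W_i}\bigr\rangle_T=0$ then identifies $M_i$ with the stochastic integral. The mode sum is handled by truncating to the first $N$ modes and using $\rho_N\to0$ to control the tail bracket. This gives \eqref{eq:variational} for $\varphi\in D(A)$. Because $\tilde u\in L^2(\Omega\times[0,T];V^K)$, density of $D(A)$ in $V$ and integration by parts extend it to all $\varphi\in V$.

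For (iv), the variational identity with $\tilde u\in L^2(0,T;V)$, drift in $L^2(0,T;V^*)$ and Hilbert–Schmidt noise yields $H^K$-continuity by the continuity theorem of \cite{PrevotRockner2007}. With (ii) in hand, \Cref{prop:spatial-continuity} applies and gives the $C(\overline I)^K$ version. Progressive measurability follows from continuity and adaptedness. I expect the main obstacle to be the infinite-mode identification: the cross-variation with every $\beta_{im}$ must be controlled uniformly in $m$ so that the tail bracket vanishes. Only then is the limit integral driven by the full $\tilde W$, and not merely by some extension of a Wiener process.
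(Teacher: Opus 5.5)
Your treatment of (i), (ii) and (iv) is essentially the paper's. You use the Skorokhod representation on the Polish space $\cX_T\times\cW_T$ and transfer the Brownian and independence identities through the joint law, using that $u^n$ is $\cF^W$-adapted. You pass to the limit in the martingale, bracket and cross-variation identities for $M_i^\varphi$ and each $\beta_{iq}$, using moments of order greater than two, and rebuild the full stochastic integral by truncating to $N$ modes. You then extend from $D(A)$ to $V$, and finish with the variational continuity theorem and \Cref{prop:spatial-continuity}.

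The genuine difference is (iii). You transfer the bounds from the Skorokhod copies by lower semicontinuity and Fatou, before $u$ is known to solve the equation. The paper instead first establishes (ii), then applies \Cref{prop:energy-estimate} and \Cref{thm:power-main} directly to the identified WV solution, explicitly avoiding that shortcut. Your route is sound for \eqref{eq:main-energy}. The map $z\mapsto\sup_t\|z(t)\|_{H^K}$ (set to $+\infty$ off $H^K$) is lower semicontinuous on $C([0,T];H_A^{-2}(I)^K)$, and $z\mapsto\int_0^T\|z\|_{V^K}^2\,\dd t$ is lower semicontinuous on $L^2(0,T;H^K)$. It also conveniently supplies $u\in L^2(\Omega\times(0,T);V^K)$ for the extension step in (ii); the paper gets this from weak compactness.

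For \eqref{eq:power-main}, however, almost-everywhere convergence along a subsequence only handles the two zero-order terms. Since $\partial_xu_i^n$ converges merely weakly, the weighted gradient term needs a separate argument. Write it as $\frac{4}{p^2}\int_0^T\|\partial_x(u_i+\eta)^{p/2}\|_H^2\,\dd t$. Note that $(u_i^n+\eta)^{p/2}\to(u_i+\eta)^{p/2}$ in $L^2(0,T;H)$ pathwise, because $r\mapsto(r+\eta)^{p/2}$ is Lipschitz on $[0,\infty)$. Then use weak lower semicontinuity of the gradient seminorm in $L^2(0,T;V)$ pathwise, and only afterwards apply Fatou in $\omega$. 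Alternatively, simply invoke \Cref{thm:power-main} once (ii) is in hand.

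The paper's ordering obtains the estimates from a priori bounds valid for every WV solution, which is exactly what \Cref{thm:stability} needs inside \Cref{lem:diagonal}. It also avoids measurability and semicontinuity bookkeeping on $\cX_T$. Your ordering makes (iii) independent of the identification step and of the omitted direct proof of \Cref{thm:power-main}.
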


\begin{proof}
By the Skorokhod representation theorem, pass to a new probability space on which random variables with the same laws satisfy
\begin{equation}\label{eq:skorokhod-convergence}
 (\widetilde u^n,\widetilde W^n)\longrightarrow(\widetilde u,\widetilde W)
 \quad\text{a.s. in }\cX_T\times\cW_T.
\end{equation}
We omit tildes below.

\emph{Step 1: variational regularity on the new space.}
The equality of laws and \eqref{eq:uniform-energy} imply boundedness of $(u^n)$ in
$L^2(\Omega\times(0,T);V^K)$.  Extract a weakly convergent subsequence in that Hilbert space.  On the other hand, \eqref{eq:skorokhod-convergence} gives convergence in probability in $L^2(0,T;H^K)$, and the higher energy moments imply uniform integrability; hence the convergence is strong in
$L^2(\Omega\times(0,T);H^K)$.  The weak $V^K$-limit is therefore $u$, and
\[
 u\in L^2(\Omega\times(0,T);V^K).
\]
Nonnegativity passes through the almost-everywhere $L^2$ limit.

Moreover, evaluation at zero is continuous from $\cX_T$ to
$H_A^{-2}(I)^K$.  Since $u^n(0)=u_0^n$ and $u_0^n\to u_0$ in
$H^K$, \eqref{eq:skorokhod-convergence} also gives
\begin{equation}\label{eq:limit-initial-condition}
 u(0)=u_0\qquad\text{in }H_A^{-2}(I)^K\quad\text{a.s.}.
\end{equation}

\emph{Step 2: Brownian coordinates in the joint filtration.}

Let
\[
 \mathcal G_t^0=\sigma\{u(r),W(r):0\le r\le t\},
\]
and
put $\overline{\mathcal G}_t^0=\mathcal G_t^0\vee\mathcal N$, where $\mathcal N$ denotes the null sets of the terminal canonical law.  We use the
usual augmentation
\[
 \mathcal G_t=
 \bigcap_{\substack{r\in\mathbb Q,\ t<r\le T}}
 \overline{\mathcal G}_r^0\quad(0\le t<T),
 \qquad \mathcal G_T=\overline{\mathcal G}_T^0.
\]
Fix one countable graph-norm dense set $\mathcal D_0\subset D(A)$.
Let $\mathscr C_s$ be the countable algebra of bounded cylinder random
variables of the form
\begin{equation}\label{eq:canonical-cylinder-class}
 F=\Phi\Bigl(
   \ip{u(r_1)}{\phi_1},\ldots,\ip{u(r_a)}{\phi_a},
   \beta_{i_1m_1}(r'_1),\ldots,\beta_{i_bm_b}(r'_b)
 \Bigr),
\end{equation}
where all $r_\ell,r'_\ell\in\mathbb Q\cap[0,s]$, all
$\phi_\ell\in\mathcal D_0^K$, the state brackets are the
$H_A^{-2}$--$D(A)$ dualities, and $\Phi$ ranges over a fixed countable
convergence-determining algebra of bounded Lipschitz functions on
finite-dimensional Euclidean spaces.

For completeness, this single test set determines the entire state-path
$\sigma$-field.  Indeed, for every $a\le T$ the canonical injection
\[
 L^2(0,a;H^K)\cap C([0,a];H_A^{-2}(I)^K)
 \longrightarrow C([0,a];H_A^{-2}(I)^K)
\]
is continuous and injective between Polish spaces.  The Lusin--Souslin
theorem therefore makes its inverse on its image Borel measurable.
Consequently the Borel $\sigma$-field of the intersection space is generated
by its continuous-path coordinate.  Graph-norm density of $\mathcal D_0$
implies that its duality pairings separate points of $H_A^{-2}$, and
rational-time evaluations generate the Borel $\sigma$-field of the continuous
path space.  Together with the analogous rational coordinate evaluations
of $W$, this proves that $\mathscr C_s$ generates $\mathcal G_s^0$.

For rational $0\le s<t\le T$, $F\in\mathscr C_s$, and any two noise coordinates, the approximating joint law satisfies
\begin{align}
 \E\bigl[(\beta_{im}^n(t)-\beta_{im}^n(s))F^n\bigr]&=0,
 \label{eq:brownian-increment-identity-n}\\
 \E\Bigl[\bigl(
  \beta_{im}^n(t)\beta_{jq}^n(t)-\delta_{ij}\delta_{mq}t
  -\beta_{im}^n(s)\beta_{jq}^n(s)+\delta_{ij}\delta_{mq}s
 \bigr)F^n\Bigr]&=0,
 \label{eq:brownian-product-identity-n}
\end{align}
where $F^n$ is obtained from \eqref{eq:canonical-cylinder-class} by replacing $(u,W)$ with $(u^n,W^n)$.  These equalities depend only on the joint law: on the original space $u^n$ is adapted to $W^n$, and hence they remain valid for the Skorokhod copies even though adaptedness is not asserted on the representation space.

The almost-sure convergence \eqref{eq:skorokhod-convergence} implies
$F^n\to F$ and convergence of every displayed noise coordinate.  Since
$F^n$ is uniformly bounded and Brownian suprema have moments of every
order, both families in
\eqref{eq:brownian-increment-identity-n}--\eqref{eq:brownian-product-identity-n}
are uniformly integrable.  Passing to the limit therefore gives the two
identities for rational $s,t$ and every $F\in\mathscr C_s$.  The functional
monotone-class theorem extends them to every bounded
$\mathcal G_s^0$-measurable random variable, and path continuity first
extends the identities to all deterministic $s<t$ for the completed raw
filtration $(\overline{\mathcal G}_t^0)$.

We spell out the final augmentation step.  If $Y$ is bounded and
$\mathcal G_s$-measurable, then it is
$\overline{\mathcal G}_{s_k}^0$-measurable for every rational sequence
$s_k\downarrow s$.  Applying the raw-filtration identity at $s_k<t$ and
then using path continuity and uniform integrability gives the identity at
$s$.  Completion does not change these conditional-expectation
identities.  Thus every finite family of coordinates is a continuous
$(\mathcal G_t)$-local martingale whose covariance matrix is the identity.
The multidimensional L\'evy characterization, see
\cite[Chapter~IV]{RevuzYor1999}, shows that the coordinates are mutually
independent $(\mathcal G_t)$-Brownian motions.  Consequently each $W_i$ is
a cylindrical Wiener process on $U_i$ in the enlarged joint filtration.
This proves (i).

\emph{Step 3: martingales, brackets, cross-variations, and reconstruction of the full stochastic integral.}
For $\varphi\in\mathcal D_0$, define
\begin{align}\label{eq:Mnphi}
 M_i^{n,\varphi}(t)
 &=\ip{u_i^n(t)}{\varphi}-\ip{u_{i,0}^n}{\varphi}
 -\int_0^t\ip{u_i^n(s)}{A\varphi}\,\dd s\notag\\
 &\quad-\int_0^t\ip{\sum_{j=1}^Kq_{ij}u_j^n(s)+\theta_i}{\varphi}\,\dd s.
\end{align}

Define $M_i^\varphi$ by the same formula with $(u^n,u_0^n)$ replaced by
$(u,u_0)$.  The map from $\cX_T$ to the displayed continuous process is
continuous: the endpoint terms use the $C(H_A^{-2})$ topology and
$\varphi\in D(A)$, while the time integrals use the strong $L^2(H)$
topology; the deterministic initial terms converge because
$u_0^n\to u_0$ in $H^K$.  Hence
\begin{equation}\label{eq:M-process-convergence}
 M_i^{n,\varphi}\longrightarrow M_i^\varphi
 \quad\text{a.s. in }C([0,T]).
\end{equation}
The continuous adapted $H_A^{-2}$-valued path $u$ is progressively
measurable.  The embedding $H^K\hookrightarrow H_A^{-2}(I)^K$ is
continuous and injective, so Lusin--Souslin provides a Borel inverse on
its image.  Together with $u\in L^2(\widetilde\Omega\times(0,T);H^K)$,
this gives an $H^K$-valued predictable representative after modification
on a $\dd\widetilde{\Prob}\,\dd t$-null set.  Since $z\mapsto G_i(z)$ is Borel
(and continuous on the nonnegative cone with the $H^K$ topology),
$G_i(u)$ is a predictable $L_2(U_i,H)$-valued process.

For $\varphi,\psi\in\mathcal D_0$, put
\begin{align}
 A_{ij}^{n,\varphi,\psi}(t)
 &=\delta_{ij}\int_0^t
 \ip{G_i^n(u^n(s))^*\varphi}{G_i^n(u^n(s))^*\psi}_{U_i}\,\dd s,
 \label{eq:brackets-n}\\
 C_{ijq}^{n,\varphi}(t)
 &=\delta_{ij}\int_0^t\ip{G_i^n(u^n(s))f_q}{\varphi}\,\dd s.
 \label{eq:cross-n}
\end{align}
Let $A_{ij}^{\varphi,\psi}$ and $C_{ijq}^{\varphi}$ denote the analogous expressions with $G_i(u)$.  By \Cref{lem:coefficient-convergence},
\begin{equation}\label{eq:bracket-integrator-convergence}
 A_{ij}^{n,\varphi,\psi}\to A_{ij}^{\varphi,\psi},
 \qquad
 C_{ijq}^{n,\varphi}\to C_{ijq}^{\varphi}
 \quad\text{in probability in }C([0,T]),
\end{equation}
and their total variations are uniformly integrable.  Here are the moment
bounds used in this assertion and in the passage through the martingale
identities below.  By \eqref{eq:G-HS} and
\eqref{eq:uniform-energy}, for each fixed collection of indices and test
functions there is $\delta>0$ such that
\begin{align}\label{eq:UI-identification}
 \sup_n\E\sup_{t\le T}|M_i^{n,\varphi}(t)|^{2+4\delta}&<\infty,
 \notag\\
 \sup_n\E\Bigl[\operatorname{Var}_{[0,T]}
       (A_{ij}^{n,\varphi,\psi})\Bigr]^{1+2\delta}
 +\sup_n\E\Bigl[\operatorname{Var}_{[0,T]}
       (C_{ijq}^{n,\varphi})\Bigr]^{1+2\delta}&<\infty.
\end{align}
Indeed,
\begin{align*}
 \langle M_i^{n,\varphi}\rangle_T
 &\le \|\varphi\|_H^2\int_0^T
       \|G_i^n(u^n(s))\|_{L_2(U_i,H)}^2\,\dd s,\\
 \operatorname{Var}_{[0,T]}(A_{ij}^{n,\varphi,\psi})
 &\le \frac12\int_0^T\bigl(
       \|G_i^n(u^n)^*\varphi\|_{U_i}^2
       +\|G_i^n(u^n)^*\psi\|_{U_i}^2\bigr)\,\dd s,\\
 \operatorname{Var}_{[0,T]}(C_{ijq}^{n,\varphi})
 &\le T^{1/2}\|\varphi\|_H
       \left(\int_0^T
       \|G_i^n(u^n(s))\|_{L_2(U_i,H)}^2\,\dd s\right)^{1/2},
\end{align*}
and BDG completes the first estimate.  Since Brownian suprema have moments
of every order, H\"older's inequality also gives
\begin{equation}\label{eq:UI-products-identification}
 \sup_n\E\sup_{t\le T}
 \left(
 |M_i^{n,\varphi}(t)M_j^{n,\psi}(t)|^{1+\delta}
 +|M_i^{n,\varphi}(t)\beta_{jq}^n(t)|^{1+\delta}
 \right)<\infty.
\end{equation}
Thus the brackets, cross-variation compensators, and every product
multiplier below are uniformly integrable.

For rational $0\le s<t\le T$ and $F^n$ as above, the approximating joint law satisfies the three identities
\begin{align}
 \E\bigl[(M_i^{n,\varphi}(t)-M_i^{n,\varphi}(s))F^n\bigr]&=0,
 \label{eq:martingale-test-n}\\
 \E\Bigl[\bigl(
 M_i^{n,\varphi}(t)M_j^{n,\psi}(t)-A_{ij}^{n,\varphi,\psi}(t)
 -M_i^{n,\varphi}(s)M_j^{n,\psi}(s)+A_{ij}^{n,\varphi,\psi}(s)
 \bigr)F^n\Bigr]&=0,
 \label{eq:bracket-test-n}\\
 \E\Bigl[\bigl(
 M_i^{n,\varphi}(t)\beta_{jq}^n(t)-C_{ijq}^{n,\varphi}(t)
 -M_i^{n,\varphi}(s)\beta_{jq}^n(s)+C_{ijq}^{n,\varphi}(s)
 \bigr)F^n\Bigr]&=0.
 \label{eq:cross-test-n}
\end{align}

The explicit bounds
\eqref{eq:UI-identification}--\eqref{eq:UI-products-identification} permit
passage to the limit in all three identities using
\eqref{eq:M-process-convergence}--\eqref{eq:bracket-integrator-convergence}.
The monotone-class argument first gives the identities in the completed
raw filtration.  The rational right-limit argument in Step~2 then extends
them to $(\mathcal G_t)$.  Simultaneously for all
$\varphi,\psi\in\mathcal D_0$, all modes, and all times,
$M_i^\varphi$ is therefore a continuous square-integrable
$(\mathcal G_t)$-martingale and

\begin{equation}\label{eq:limit-bracket-cross}
 \langle M_i^\varphi,M_j^\psi\rangle=A_{ij}^{\varphi,\psi},
 \qquad
 \langle M_i^\varphi,\beta_{jq}\rangle=C_{ijq}^{\varphi}.
\end{equation}
The simultaneity is obtained by first intersecting the probability-one events over the countable set of rational times, test functions, components, and modes, and then using continuity.

For $N\ge1$, define the finite-mode integral
\[
 I_i^{\varphi,N}(t)
 =\sum_{q=1}^N\int_0^t
   \ip{G_i(u(s))f_q}{\varphi}\,\dd\beta_{iq}(s).
\]
The stochastic-integration rule for covariations and \eqref{eq:limit-bracket-cross} give
\begin{align}
 \langle M_i^\varphi,I_i^{\varphi,N}\rangle_t
 &=\int_0^t\sum_{q=1}^N
   \abs{\ip{G_i(u(s))f_q}{\varphi}}^2\,\dd s,
 \label{eq:finite-cross-reconstruction}\\
 \langle I_i^{\varphi,N}\rangle_t
 &=\int_0^t\sum_{q=1}^N
   \abs{\ip{G_i(u(s))f_q}{\varphi}}^2\,\dd s.
 \label{eq:finite-bracket-reconstruction}
\end{align}
Moreover, the BDG inequality yields
\begin{align}\label{eq:full-integral-L2-limit}
 &\E\sup_{t\le T}\abs{I_i^{\varphi,M}(t)-I_i^{\varphi,N}(t)}^2\notag\\
 &\qquad\le4\E\int_0^T\sum_{q=N+1}^M
 \abs{\ip{G_i(u(s))f_q}{\varphi}}^2\,\dd s
 \longrightarrow0.
\end{align}
The limit is precisely
\[
 I_i^\varphi(t)=\int_0^t\ip{G_i(u(s))^*\varphi}{\dd W_i(s)}_{U_i}.
\]
The right-hand sides of \eqref{eq:finite-cross-reconstruction}--\eqref{eq:finite-bracket-reconstruction} converge in $L^1$ and in total variation to
\begin{equation}\label{eq:full-bracket-reconstruction}
 B_i^\varphi(t)=\int_0^t\|G_i(u(s))^*\varphi\|_{U_i}^2\,\dd s.
\end{equation}
In addition, the Kunita--Watanabe inequality and \eqref{eq:full-integral-L2-limit} imply
\[
 \E\operatorname{Var}_{[0,T]}
 \langle M_i^\varphi,I_i^\varphi-I_i^{\varphi,N}\rangle
 \le
 \Bigl(\E\langle M_i^\varphi\rangle_T\Bigr)^{1/2}
 \Bigl(\E\langle I_i^\varphi-I_i^{\varphi,N}\rangle_T\Bigr)^{1/2}
 \longrightarrow0.
\]
Consequently
\[
 \langle M_i^\varphi,I_i^\varphi\rangle
 =\langle M_i^\varphi\rangle
 =\langle I_i^\varphi\rangle
 =B_i^\varphi.
\]
It follows that $M_i^\varphi-I_i^\varphi$ is a continuous local martingale with zero quadratic variation, and hence is identically zero.

The preceding identity holds on one probability-one event for every
$\varphi\in\mathcal D_0$.  Graph-norm approximation first extends it to all
$\varphi\in D(A)$.  To pass from $D(A)$ to $V$, define in $V^*$
\[
 \bar u_i(t)=u_{i,0}
 +\int_0^t\left(Au_i(s)+\sum_{j=1}^Kq_{ij}u_j(s)+\theta_i\right)\dd s
 +\int_0^tG_i(u(s))\,\dd W_i(s).
\]
Both sides have continuous $H_A^{-2}$-valued paths, and the identities on
$\mathcal D_0$ separate points of $H_A^{-2}$.  Hence
$u_i(t)=\bar u_i(t)$ for every $t\in[0,T]$ on the same event; at $t=0$
this also records explicitly the initial condition
$u_i(0)=u_{i,0}$ from \eqref{eq:limit-initial-condition}.  The variational
continuity theorem gives an $H$-continuous version of $\bar u_i$.
Replacing $u_i$ by this indistinguishable version, let
$\varphi_k\in D(A)$ tend to $\varphi\in V$ in the $V$-norm.  The heat term
converges by the variational pairing, the lower-order drift by its
$H$-bound, and the stochastic term by
\[
 \E\sup_{t\le T}\abs{I_i^{\varphi_k}(t)-I_i^\varphi(t)}^2
 \le4\E\int_0^T\|G_i(u(s))\|_{L_2(U_i,H)}^2
             \|\varphi_k-\varphi\|_H^2\,\dd s.
\]
Thus the variational identity holds for every $\varphi\in V$.

\emph{Step 4: continuity and estimates of the identified solution.}
The variational continuity theorem gives an $H^K$-continuous version; see \cite[Chapter~4]{PrevotRockner2007}.  Since the positive cone of $H^K$ is closed and $u$ is nonnegative for almost every time, this version is nonnegative at every time on a single probability-one event.  Instead of passing the supremum energy estimate through the path topology, apply \Cref{prop:energy-estimate} directly to the identified weak solution.  Likewise, \Cref{thm:power-main} was proved directly for every weak solution and gives the power entropy without a limiting lower-semicontinuity shortcut.  Finally, \Cref{prop:spatial-continuity} gives a continuous $C(\overline I)^K$-valued version.  This proves (ii)--(iv).
\end{proof}

\begin{lemma}[Diagonal property of joint limits]\label{lem:diagonal}
Let $(n_j)$ and $(m_j)$ be arbitrary subsequences.  Every joint subsequential limit of
\[
 (u^{n_j},u^{m_j},W)
\]
on $\cX_T\times\cX_T\times\cW_T$ is concentrated on
$\{(z,z,w):z\in\cX_T,\ w\in\cW_T\}$.
\end{lemma}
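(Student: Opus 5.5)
Tightness of the triples $(u^{n_j},u^{m_j},W)$ on $\cX_T\times\cX_T\times\cW_T$ follows from \Cref{prop:tightness}: each marginal pair is tight, so the triple is too. Fix a subsequential limit $(u,v,W)$. By Skorokhod representation we may take copies $(u^{n_j},u^{m_j},W^j)\to(u,v,W)$ almost surely. On the original space both $u^{n_j}$ and $u^{m_j}$ are adapted functionals of the same $W$, so the joint law records that they are driven by the same noise.

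\textbf{Step 1: both limits solve the equation with the same noise.} Let $(\mathcal G_t)$ be the augmented filtration generated jointly by $(u,v,W)$, and rerun the proof of \Cref{prop:limit-identification}. The cylinder class $\mathscr C_s$ now includes functionals of both state coordinates. The Brownian identities \eqref{eq:brownian-increment-identity-n}--\eqref{eq:brownian-product-identity-n} and the martingale, bracket and cross-variation identities \eqref{eq:martingale-test-n}--\eqref{eq:cross-test-n} hold for each state separately, tested against these joint cylinder functions. They hold on the approximating level because $u^{n_j},u^{m_j}$ are adapted to $W$, and the uniform integrability bounds \eqref{eq:UI-identification}--\eqref{eq:UI-products-identification} carry over unchanged. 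Lévy's characterization shows that $W$ is a family of independent $(\mathcal G_t)$-Brownian motions. The reconstruction argument ($M^\varphi-I^\varphi$ has zero quadratic variation) then shows that $u$ and $v$ are both nonnegative WV solutions of \eqref{eq:spde} on the same stochastic basis, driven by the same $W$. Both have initial condition $u_0$ by \eqref{eq:limit-initial-condition}, since $u_0^{n}\to u_0$ in $H^K$.

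\textbf{Step 2: pathwise uniqueness.} The processes $u$ and $v$ are therefore synchronized WV solutions with the same deterministic initial state. By \Cref{thm:stability}, $\E\cH(u(t),v(t))=0$ for all $t$, and $H^K$-continuity makes them indistinguishable. The paths agree as elements of $\cX_T$, since they agree in $L^2(0,T;H^K)$ and in $C([0,T];H_A^{-2})$. Hence the limit law is concentrated on the diagonal $\{(z,z,w)\}$.

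\textbf{Main obstacle.} The delicate point is Step 1, specifically verifying that $W$ remains a Brownian motion with respect to the filtration generated by \emph{both} limits. Otherwise \Cref{thm:stability} cannot be applied, because synchronization requires a common filtration. I would handle this exactly as in Step 2 of \Cref{prop:limit-identification}. The key fact is that the approximating identities depend only on joint laws and on adaptedness of $(u^{n_j},u^{m_j})$ to $W$. The countable test class, now built from both state coordinates, still generates the joint path $\sigma$-field by the Lusin--Souslin argument.
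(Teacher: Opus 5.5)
Your proposal is correct and follows the paper's argument. You use tightness from \Cref{prop:tightness}, then the joint form of \Cref{prop:limit-identification} to make both limiting coordinates synchronized WV solutions with initial state $u_0$, driven by the same $W$ in the joint filtration, and then \Cref{thm:stability} to conclude indistinguishability; your detail about cylinder functions of both state coordinates and adaptedness of $u^{n_j},u^{m_j}$ to $W$ is exactly what the paper's phrase ``joint form'' leaves implicit.
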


\begin{proof}
By \Cref{prop:tightness}, the joint laws are tight.  By the joint form of \Cref{prop:limit-identification}, the two limiting state coordinates are WV solutions on the same canonical stochastic basis, with the same initial state and the same limiting cylindrical Wiener process.  The Hellinger pathwise uniqueness theorem, which is stated in precisely this weak class, implies that the two state coordinates are indistinguishable.
\end{proof}

\begin{proof}[Proof of \Cref{thm:main}]
By \Cref{lem:diagonal} and the Polish-space diagonal criterion in \cite[Lemma~1.1]{GyongyKrylov2022}, the full sequence $u^n$ converges in probability in $\cX_T$ to a random variable $u$ on the original probability space.

\emph{Adaptedness to the prescribed noise.}
For every $t$, evaluation from $\cX_T$ to $H_A^{-2}(I)^K$ is continuous,
so $u^n(t)\to u(t)$ in probability in $H_A^{-2}(I)^K$.  Each $u^n(t)$
is measurable with respect to the completed natural $\sigma$-field
$\cF_t^W$; measurability is preserved under convergence in probability on
the completed $\sigma$-field.  Thus $u(t)$ is $\cF_t^W$-measurable.  Since
$u$ has continuous $H_A^{-2}$-valued paths, it has an
$(\cF_t^W)$-progressively measurable version.

\emph{Passage to the equation on the original space.}
Choose a subsequence converging almost surely in $\cX_T$.  The uniform
variational bounds give a further subsequence converging weakly in
$L^2(\Omega\times(0,T);V^K)$.  Convergence in probability in
$L^2(0,T;H^K)$ and the higher uniform energy moments imply, by Vitali's
theorem,
\[
 u^n\longrightarrow u
 \quad\text{strongly in }L^2(\Omega\times(0,T);H^K).
\]
It follows that the weak $V^K$-limit is $u$ and
$Au_i^n\rightharpoonup Au_i$ in
$L^2(\Omega\times(0,T);V^*)$.  Also, continuity of evaluation at zero and
$u_0^n\to u_0$ give
\begin{equation}\label{eq:initial-original}
 u(0)=u_0\qquad\text{in }H_A^{-2}(I)^K\quad\text{almost surely}.
\end{equation}

The $H_A^{-2}$-progressively measurable process $u$ is $H^K$-valued for
$\dd\Prob\,\dd t$-almost every $(\omega,t)$.  As in Step~3 above,
Lusin--Souslin for $H^K\hookrightarrow H_A^{-2}(I)^K$, followed by a
predictable modification on a product-null set, gives an $H^K$-valued
predictable representative.  Hence $G_i(u)$ is predictable and the
stochastic integral below is well defined.  By
\Cref{lem:coefficient-convergence},
\begin{equation}\label{eq:coeff-original}
 \E\int_0^T
 \|G_i^n(u^n(s))-G_i(u(s))\|_{L_2(U_i,H)}^2\,\dd s
 \longrightarrow0.
\end{equation}
The Hilbert-space BDG inequality consequently gives
\begin{equation}\label{eq:stoch-original}
 \E\sup_{t\le T}\left\|
 \int_0^t\bigl[G_i^n(u^n(s))-G_i(u(s))\bigr]\,\dd W_i(s)
 \right\|_H^2\longrightarrow0.
\end{equation}

Fix the countable graph-norm dense set
$\mathcal D_0\subset D(A)$ used above.  For $\varphi\in\mathcal D_0$,
self-adjointness of $A$ gives
\[
 \int_0^t\ip{Au_i^n(s)-Au_i(s)}{\varphi}_{V^*,V}\,\dd s
 =\int_0^t\ip{u_i^n(s)-u_i(s)}{A\varphi}\,\dd s,
\]
whose supremum over $t\le T$ tends to zero in $L^2(\Omega)$.  The
lower-order drift converges in $L^2(\Omega\times(0,T);H)$, the endpoint
terms converge in $C([0,T];H_A^{-2})$, and
\eqref{eq:stoch-original} treats the stochastic term.  After one further
subsequence, these convergences hold on a common probability-one event for
every $\varphi\in\mathcal D_0$, every component, and every rational
$t\in[0,T]$.  Passing to the regularized variational identities therefore
gives the limiting scalar identity at all such rational times.

To close the argument for all times and all variational test functions,
set in $V^*$
\[
 \bar u_i(t)=u_{i,0}
 +\int_0^t\left(Au_i(s)+\sum_{j=1}^Kq_{ij}u_j(s)+\theta_i\right)\dd s
 +\int_0^tG_i(u(s))\,\dd W_i(s).
\]
The path $\bar u_i$ is continuous in $V^*$, hence in $H_A^{-2}$, while
$u_i$ is continuous in $H_A^{-2}$.  The rational-time identities, path
continuity, and the fact that $\mathcal D_0$ separates points of
$H_A^{-2}$ show that $u_i(t)=\bar u_i(t)$ for every $t$ on the same event.
Density of $D(A)$ in $V$, continuity of the $V^*$--$V$ pairing, and the
BDG estimate for the stochastic pairing then extend the identity from
$\mathcal D_0$ to every $\varphi\in V$, without an uncountable
intersection of exceptional events.  This proves \eqref{eq:variational}
with the original $W$, including the initial condition
\eqref{eq:initial-original}.

The variational continuity theorem and \Cref{prop:spatial-continuity} give
the $H^K$- and $C(\overline I)^K$-continuous versions.  They are
indistinguishable from the adapted $H_A^{-2}$-continuous version, so the
resulting solution remains adapted and $G(u)$ remains predictable.  Thus
$u$ is a probabilistically strong solution in the sense of
\Cref{def:strong-solution}.

The energy estimate follows by applying \Cref{prop:energy-estimate} directly to $u$, and the mild formulation follows by semigroup duality.  Pathwise uniqueness is \Cref{thm:stability}, so the constructed process is the unique strong solution driven by the prescribed $W$.  Since $T$ is arbitrary, the solution is global.

To make the dependence on the horizon explicit, perform the construction for integer horizons and denote the resulting solution on $[0,N]$ by $u^{(N)}$.  The restriction of $u^{(N+1)}$ to $[0,N]$ and $u^{(N)}$ are strong solutions with the same initial state and the same prescribed Brownian family.  Pathwise uniqueness gives
$u^{(N+1)}|_{[0,N]}=u^{(N)}$ almost surely.  Intersecting these countably many probability-one events defines a single adapted process $u(t)=u^{(N)}(t)$ for $t\le N$, independent of the chosen integer $N$.  This process is the global strong solution.
\end{proof}

\appendix

\section{Factorization, compactness, and variational It\^o formulas}\label{app:compactness}

\begin{lemma}[Stochastic-convolution factorization]\label{lem:factorization}
Let $\alpha\in(0,1)$ and $q>2/(1-\alpha)$.  Let $B$ be progressively measurable with values in $L_2(U,H)$ and suppose
\begin{equation}\label{eq:factorization-assumption}
 \E\int_0^T\|B(s)\|_{L_2(U,H)}^q\,\dd s<\infty.
\end{equation}
Then the stochastic convolution
\[
 S(t)=\int_0^tP_{t-s}B(s)\,\dd W(s)
\]
has a continuous $H^\alpha(I)$-valued version and
\begin{equation}\label{eq:factorization-bound}
 \E\|S\|_{C([0,T];H^\alpha)}^q
 \le C_{\alpha,q,T}\E\int_0^T\|B(s)\|_{L_2(U,H)}^q\,\dd s.
\end{equation}
\end{lemma}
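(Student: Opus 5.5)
We prove \Cref{lem:factorization} by the Da Prato--Kwapie\'n--Zabczyk factorization method. Fix $\gamma\in(0,1)$, to be chosen later. The basic tool is the identity
\[
 \int_r^t(t-s)^{\gamma-1}(s-r)^{-\gamma}\,\dd s=\frac{\pi}{\sin\pi\gamma},
 \qquad 0\le r<t .
\]
Combined with the semigroup property $P_{t-r}=P_{t-s}P_{s-r}$ and the stochastic Fubini theorem, it gives
\[
 S(t)=\frac{\sin\pi\gamma}{\pi}\int_0^t(t-s)^{\gamma-1}P_{t-s}Y(s)\,\dd s,
 \qquad
 Y(s)=\int_0^s(s-r)^{-\gamma}P_{s-r}B(r)\,\dd W(r).
\]
The stochastic Fubini theorem needs an integrability check, which follows from the $L^2$ bound on $Y$ below. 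The proof then has two estimates: an $L^q$ bound for $Y$ in $H$, and a deterministic bound showing that the operator $R_\gamma f(t)=\int_0^t(t-s)^{\gamma-1}P_{t-s}f(s)\,\dd s$ maps $L^q(0,T;H)$ into $C([0,T];H^\alpha)$.

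\emph{Bound on $Y$.} By BDG in $H$ and contractivity of $P_t$,
\[
 \E\|Y(s)\|_H^q\le C_q\,\E\Bigl(\int_0^s(s-r)^{-2\gamma}\|B(r)\|_{L_2(U,H)}^2\,\dd r\Bigr)^{q/2}.
\]
We apply Young's convolution inequality in time, with kernel $r^{-2\gamma}\in L^1(0,T)$ (this requires $\gamma<1/2$) and $\|B\|^2\in L^{q/2}$. This yields
\[
 \E\int_0^T\|Y(s)\|_H^q\,\dd s\le C_{\gamma,q,T}\,\E\int_0^T\|B(s)\|_{L_2(U,H)}^q\,\dd s .
\]
An alternative is H\"older with exponents $q/(q-2)$ and $q/2$, which also works once $2\gamma q/(q-2)<1$. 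Either route requires $\gamma<1/2$.

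\emph{Regularity of $R_\gamma$.} We use the analytic-semigroup bound $\|(\mathrm{Id}-A)^{\alpha/2}P_r\|_{L(H)}\le C_{\alpha,T}r^{-\alpha/2}$ for $0<r\le T$; this is the operator-norm analogue of \eqref{eq:semigroup-Halpha}. With it,
\[
 \|R_\gamma f(t)\|_{H^\alpha}\le C\int_0^t(t-s)^{\gamma-1-\alpha/2}\|f(s)\|_H\,\dd s .
\]
H\"older's inequality bounds the right side by $C\|f\|_{L^q(0,T;H)}$, uniformly in $t$, provided
\[
 (\gamma-1-\alpha/2)\,\frac{q}{q-1}>-1,
 \qquad\text{i.e.}\qquad
 \gamma>\frac{\alpha}{2}+\frac1q .
\]
\emph{Time continuity} of $R_\gamma f$ in $H^\alpha$ follows as in the proof of \Cref{prop:spatial-continuity}. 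One first proves it for $f$ in a dense class, for example continuous $f$, using strong continuity of $P_h$ on $H^\alpha$ and splitting off a short interval near $t$. The uniform bound above then extends continuity to all $f\in L^q(0,T;H)$, since uniform limits of continuous functions are continuous.

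\emph{Choice of $\gamma$, and the main obstacle.} The two constraints are compatible if $\alpha/2+1/q<1/2$, i.e.\ $q>2/(1-\alpha)$, which is exactly the hypothesis. We then fix $\gamma$ strictly between $\alpha/2+1/q$ and $1/2$. Combining the two estimates gives a version $\widetilde S=\frac{\sin\pi\gamma}{\pi}R_\gamma Y$ that is continuous in $H^\alpha$, together with \eqref{eq:factorization-bound}. We also check that $\widetilde S(t)=S(t)$ almost surely for each $t$, so it is a genuine version. The main care point is the stochastic Fubini exchange and the measurability of $Y$ (progressive measurability in $s$). The exchange is justified because
\[
 \int_0^t(t-s)^{\gamma-1}\Bigl(\E\int_0^s(s-r)^{-2\gamma}\|B(r)\|^2\,\dd r\Bigr)^{1/2}\dd s<\infty
\]
under \eqref{eq:factorization-assumption}.
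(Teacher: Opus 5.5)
Your proof is correct. It rests on the same factorization identity as the paper's proof, but you place the spatial smoothing on the other side of that identity.

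The paper measures $Y_a(s)=\int_0^s(s-r)^{-a}P_{s-r}B(r)\,\dd W(r)$ directly in $H^\alpha$, using the Hilbert--Schmidt smoothing bound \eqref{eq:semigroup-Halpha}. This costs integrability of $r^{-2a}(1+r^{-\alpha})$, i.e.\ $2a+\alpha<1$. After that, $\mathcal R_a$ only has to map $L^q(0,T;H^\alpha)$ into $C([0,T];H^\alpha)$, which needs only contractivity and strong continuity of $P_t$ on $H^\alpha$, together with $a>1/q$.

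You instead keep $Y$ in $H$. BDG then needs only contractivity of $P_t$, and the constraint is $\gamma<1/2$. You let $R_\gamma$ gain the $\alpha$ derivatives through the operator-norm bound $\|(\mathrm{Id}-A)^{\alpha/2}P_r\|_{L(H)}\le Cr^{-\alpha/2}$, which costs $\gamma>\alpha/2+1/q$.

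The two admissible windows, $(1/q,(1-\alpha)/2)$ for $a$ and $(\alpha/2+1/q,1/2)$ for $\gamma$, are translates of each other by $\alpha/2$. So both are nonempty exactly when $q>2/(1-\alpha)$.

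What each split buys:
\begin{itemize}
\item Your version is the classical Da Prato--Kwapie\'n--Zabczyk form. It never uses Hilbert--Schmidt smoothing of the noise coefficient, so it transfers to settings where only operator-norm smoothing is available.
\item The paper's version keeps the deterministic operator on a single space $H^\alpha$, so its boundedness and continuity are immediate.
\end{itemize}

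You also make explicit two points the paper delegates to \cite[Theorem~5.10]{DaPratoZabczyk2014}:
\begin{itemize}
\item The integrability condition for the stochastic Fubini exchange. Your displayed integral is indeed finite. Jensen puts $s\mapsto\E\|B(s)\|_{L_2(U,H)}^2$ in $L^{q/2}(0,T)$. Young's inequality keeps its convolution with $r^{-2\gamma}$ in $L^{q/2}$. H\"older against $(t-s)^{\gamma-1}\in L^{q/(q-1)}(0,t)$ then uses $\gamma>1/q$.
\item The identification $\widetilde S(t)=S(t)$ almost surely for each $t$.
\end{itemize}
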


\begin{proof}
The analytic Neumann heat semigroup satisfies
\[
 \|P_rB\|_{L_2(U,H^\alpha)}^2
 \le C_\alpha(1+r^{-\alpha})\|B\|_{L_2(U,H)}^2.
\]
The restriction $q>2/(1-\alpha)$ is exactly what permits a factorization exponent $a$ satisfying
\begin{equation}\label{eq:factorization-exponent}
 \frac1q<a<\frac{1-\alpha}{2}.
\end{equation}
Define
\[
 Y_a(s)=\int_0^s(s-r)^{-a}P_{s-r}B(r)\,\dd W(r).
\]

Put $k(r)=r^{-2a}(1+r^{-\alpha})\one_{(0,T)}(r)$.  Since
$2a+\alpha<1$, one has $k\in L^1(0,T)$.  The BDG inequality followed by
Young's convolution inequality in time gives
\begin{align}
 \E\int_0^T\|Y_a(s)\|_{H^\alpha}^q\,\dd s
 &\le C_q\E\int_0^T
 \left[\int_0^s k(s-r)\|B(r)\|_{L_2(U,H)}^2\,\dd r\right]^{q/2}\dd s
 \notag\\
 &\le C_{\alpha,a,q,T}\E\int_0^T
 \|B(r)\|_{L_2(U,H)}^q\,\dd r.
 \label{eq:factorization-Ya-bound}
\end{align}
Thus $Y_a\in L^q(\Omega\times(0,T);H^\alpha)$ with the required
quantitative bound.  The factorization identity (cf. \cite[Theorem~5.10]{DaPratoZabczyk2014})
\[
 S(t)=\frac{\sin(\pi a)}{\pi}
 \int_0^t(t-s)^{a-1}P_{t-s}Y_a(s)\,\dd s
\]
defines a bounded deterministic convolution map
\[
 \mathcal R_a:L^q(0,T;H^\alpha)\longrightarrow
 C([0,T];H^\alpha),
 \qquad
 (\mathcal R_ay)(t)=\int_0^t(t-s)^{a-1}P_{t-s}y(s)\,\dd s.
\]
Indeed, H\"older's inequality (equivalently, Young's convolution
inequality) gives
\[
 \sup_{t\le T}\|\mathcal R_ay(t)\|_{H^\alpha}
 \le C_{a,q,T}
 \|r^{a-1}\|_{L^{q'}(0,T)}\|y\|_{L^q(0,T;H^\alpha)},
\]
and the kernel belongs to $L^{q'}(0,T)$ precisely because $a>1/q$.
Approximation by time-step functions, together with the strong
continuity of $(P_t)$, gives continuity of $\mathcal R_ay$ in
$H^\alpha$.  Applying this estimate to $Y_a$, and then the BDG estimate
above, yields \eqref{eq:factorization-bound}.
\end{proof}

\begin{lemma}[Compactness in the chosen path space]\label{lem:compactness-appendix}
Let $\mathcal K$ be a family bounded in
\[
 L^2(0,T;V)\cap L^\infty(0,T;H)
\]
and equicontinuous in $H^{-2}$ with a common positive H\"older exponent.  Then $\mathcal K$ is relatively compact in
\[
 L^2(0,T;H)\cap C([0,T];H^{-2}).
\]
\end{lemma}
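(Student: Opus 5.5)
The plan is to combine a compactness criterion in $L^2(0,T;H)$ of Aubin--Lions--Simon type with an Arzel\`a--Ascoli argument in $C([0,T];H^{-2})$. Relative compactness in the intersection space, equipped with the sum norm, is equivalent to relative compactness in each factor, because a sequence converging in both factors converges in the intersection. More precisely, $\mathcal K$ is identified with the diagonal pairs $(x,x)$ in the closed subspace described before \Cref{prop:tightness}, and a subsequence converging in both coordinates has a limit lying in that closed subspace. It therefore suffices to treat each topology separately and then extract a common subsequence by a diagonal argument.

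\emph{The $C([0,T];H^{-2})$ factor.} For each fixed $t$, the set $\{z(t):z\in\mathcal K\}$ is bounded in $H$: the $L^\infty(0,T;H)$ bound holds a.e., and it extends to every $t$ because the paths are continuous in $H^{-2}$ and bounded sets of $H$ are weakly closed, hence closed under $H^{-2}$ limits. The embedding $H\Subset H_A^{-2}$ is compact, since it is diagonal in the eigenbasis $(e_k)$ with weights $(1+\nu_k)^{-1}\to0$. Hence the set of values at each time is relatively compact in $H^{-2}$. The common H\"older modulus gives equicontinuity, so Arzel\`a--Ascoli yields relative compactness in $C([0,T];H^{-2})$.

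\emph{The $L^2(0,T;H)$ factor.} I would invoke Simon's theorem \cite{Simon1987} with the triple $V\Subset H\hookrightarrow H^{-2}$. Here $V=H^1(I)\Subset L^2(I)$ by Rellich, and $\mathcal K$ is bounded in $L^2(0,T;V)$. The remaining requirement is the time-translation condition in $L^2(0,T;H^{-2})$, namely $\sup_{z\in\mathcal K}\|z(\cdot+h)-z\|_{L^2(0,T-h;H^{-2})}\to0$ as $h\to0$. This follows at once from the uniform H\"older bound $\|z(t+h)-z(t)\|_{H^{-2}}\le Ch^\gamma$.

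Alternatively, one can argue directly by Ehrling interpolation: for every $\epsilon>0$,
\[
 \|w\|_H\le\epsilon\|w\|_V+C_\epsilon\|w\|_{H^{-2}}.
\]
Applied to the difference of two members of $\mathcal K$ and integrated in time, this shows that a subsequence that is Cauchy in $C([0,T];H^{-2})$ is Cauchy in $L^2(0,T;H)$, since the $L^2(0,T;V)$ bound absorbs the $\epsilon$-term. This route gives both factors from the same subsequence, so no separate diagonal extraction is needed. The only mildly delicate point is to check that $H^{-2}$, defined spectrally, contains $H$ continuously so that Ehrling applies; this is immediate from \eqref{eq:Halpha-definition}. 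I expect no real obstacle beyond this bookkeeping.
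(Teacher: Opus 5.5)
Your proposal is correct and follows essentially the same route as the paper: Simon's theorem with $V\Subset H\hookrightarrow H^{-2}$ and the uniform $H^{-2}$ time modulus for the $L^2(0,T;H)$ factor, then pointwise relative compactness from $H\Subset H^{-2}$ with Arzel\`a--Ascoli for the $C([0,T];H^{-2})$ factor, and finally a common subsequence whose two limits coincide. Your Ehrling-interpolation variant is also valid and a little more economical, since it gets the $L^2(0,T;H)$ convergence directly from the $C([0,T];H^{-2})$ convergence without Simon's theorem, and your extension of the $H$-bound from almost every $t$ to every $t$ fills in a detail the paper leaves implicit.
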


\begin{proof}
The time modulus in $H^{-2}$ and the bound in $L^2(0,T;V)$ give relative compactness in $L^2(0,T;H)$ by the compactness theorem of Simon \cite[Theorem~5]{Simon1987}, using $V\Subset H\hookrightarrow H^{-2}$.  For every fixed time, the $L^\infty(0,T;H)$ bound makes the set of values relatively compact in $H^{-2}$ because $H\Subset H^{-2}$ on the bounded interval.  The common H\"older modulus and the Arzel\`a--Ascoli theorem therefore give relative compactness in $C([0,T];H^{-2})$.  A diagonal subsequence converges in both topologies.  
\end{proof}

\begin{lemma}[Variational It\^o formula for the regularized entropies]\label{lem:variational-ito}
Let $U$ be a separable Hilbert space, let $W$ be a cylindrical Wiener process on $U$, and let $X$ be an adapted variational process satisfying
\begin{equation}\label{eq:variational-process}
 X(t)=X_0+\int_0^t Y(s)\,\dd s+\int_0^t Z(s)\,\dd W(s)
 \quad\text{in }V^*,
\end{equation}
where
\begin{align}
 X&\in L^2\bigl(\Omega;C([0,T];H)\bigr)
     \cap L^2\bigl(\Omega\times(0,T);V\bigr),\label{eq:X-class}\\
 Y&\in L^2\bigl(\Omega\times(0,T);V^*\bigr),
 \qquad
 Z\in L^2\bigl(\Omega\times(0,T);L_2(U,H)\bigr).\label{eq:YZ-class}
\end{align}

Let $\tau\le T$ be a stopping time.  Assume either that
$f\in C^2(\R)$ has bounded first and second derivatives and uniformly
continuous second derivative, or that the following local-extension
hypothesis holds: on the stochastic interval $[0,\tau]$, $X$ admits a
jointly measurable $C(\overline I)$-valued version whose pointwise spatial
range is almost surely contained in a fixed deterministic compact interval
$J$, and $f|_J$ agrees with the restriction of a global $C^2(\R)$ function
having those derivative bounds.  

Define
\[
 F(w)=\int_I f(w(x))\,\dd x.
\]
Then, almost surely, the following identity holds simultaneously
for every $t\in[0,T]$:
\begin{align}
 F(X(t\wedge\tau))
 &=F(X_0)
   +\int_0^{t\wedge\tau}
      \ip{Y(s)}{f'(X(s))}_{V^*,V}\,\dd s\notag\\
 &\quad+\frac12\int_0^{t\wedge\tau}
      \sum_{k=1}^{\infty}\int_I
      f''(X(s,x))\abs{Z(s)e_k(x)}^2\,\dd x\,\dd s\notag\\
 &\quad+\int_0^{t\wedge\tau}
      \ip{Z(s)^*f'(X(s))}{\dd W(s)}_U,
 \label{eq:variational-ito-scalar}
\end{align}
where $(e_k)_{k\ge1}$ is any ONB of $U$.  The series in the trace is absolutely integrable and is independent of the chosen basis.
\end{lemma}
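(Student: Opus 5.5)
The plan is the classical finite-dimensional It\^o formula combined with Galerkin (or Yosida) approximation in space, followed by a localization argument for the second alternative of the hypothesis.

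\emph{Step 1: the globally $C^2$ case.} First I would treat $f$ with bounded $f',f''$ and uniformly continuous $f''$. I would approximate $X$ by a regularization that is an $H$-valued It\^o process with $H$-valued drift. The natural choice is $X^\varepsilon=J_\varepsilon X$ with $J_\varepsilon=(\mathrm{Id}-\varepsilon A)^{-1}$, which commutes with everything and maps $V^*\to V$. Then
\[
 X^\varepsilon(t)=J_\varepsilon X_0+\int_0^tJ_\varepsilon Y\,\dd s+\int_0^tJ_\varepsilon Z\,\dd W,
\]
and this holds in $H$ with $J_\varepsilon Y\in L^2(\Omega\times(0,T);H)$. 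The functional $F$ is $C^2$ on $H$, with $DF(w)=f'(w)$ and $D^2F(w)h=f''(w)h$; its Fr\'echet differentiability follows from boundedness and uniform continuity of $f''$ on the finite-measure interval $I$. The standard Hilbert-space It\^o formula (Da Prato--Zabczyk, Theorem~4.32) therefore gives \eqref{eq:variational-ito-scalar} for $X^\varepsilon$, with the pairing $\ip{J_\varepsilon Y}{f'(X^\varepsilon)}_H$.

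\emph{Step 2: passing $\varepsilon\to0$.} As $\varepsilon\to0$ we have $X^\varepsilon\to X$ in $L^2(\Omega\times(0,T);V)$ and uniformly in $H$ in probability, because $J_\varepsilon$ is a contraction converging strongly on $V$ and on $H$, and $X$ has compact range in $H$ on $[0,T]$. Moreover $J_\varepsilon Y\to Y$ in $L^2(V^*)$ and $J_\varepsilon Z\to Z$ in $L^2(L_2(U,H))$. The terms are handled as follows.
\begin{itemize}
\item The endpoint terms converge by the Lipschitz property of $F$ on $H$.
\item For the drift, $f'(X^\varepsilon)\to f'(X)$ in $L^2(\Omega\times(0,T);V)$. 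This requires the chain rule $\partial_xf'(w)=f''(w)\partial_xw$ together with dominated convergence, using continuity of $f''$ and a.e.\ convergence along a subsequence; the pairing with $J_\varepsilon Y\to Y$ in $V^*$ then converges.
\item For the trace, write $\sum_k|J_\varepsilon Ze_k|^2$ pointwise. Its convergence in $L^1(\Omega\times(0,T)\times I)$ to $\sum_k|Ze_k|^2$ follows from $J_\varepsilon Z\to Z$ in Hilbert--Schmidt norm, and boundedness plus a.e.\ convergence of $f''(X^\varepsilon)$ finishes it.
\item The stochastic integral converges by BDG.
\end{itemize}
This gives the identity for each $t$; continuity of both sides in $t$ upgrades it to all $t$ simultaneously. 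Basis independence of the trace follows from the pointwise identity $\sum_k|Ze_k(x)|^2=\|Z^*\delta_x\|$-type Parseval, or more safely from the fact that for nonnegative bounded $g$ the quantity $\sum_k\int g|Ze_k|^2$ equals the Hilbert--Schmidt norm of $g^{1/2}Z$.

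\emph{Step 3: the local-extension case.} Let $\tilde f$ be the global $C^2$ function agreeing with $f$ on $J$, and apply Step~1 to $\tilde F$ at time $t\wedge\tau$. On $[0,\tau]$ the values $X(s,x)$ lie in $J$. Hence $\tilde f(X)=f(X)$ and $\tilde f''(X)=f''(X)$ pointwise, and $\tilde f'(X)=f'(X)$ as elements of $V$, because the Sobolev chain rule involves only values on the range. Every term of the stopped identity is therefore unchanged when $\tilde f$ is replaced by $f$. One subtlety is that $J$ should be interpreted as contained in the open set where $\tilde f=f$, or we use equality of derivatives on $J$ itself, which is all that the chain rule needs.

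\emph{Main obstacle.} I expect the hardest point to be the drift convergence in Step~2, namely $f'(J_\varepsilon X)\to f'(X)$ in $V$. The gradient term $f''(X^\varepsilon)\partial_xX^\varepsilon$ needs a.e.\ convergence of $X^\varepsilon$ together with strong $L^2$ convergence of $\partial_xX^\varepsilon$. I would obtain it via a subsequence and the generalized dominated convergence theorem, which requires careful handling since the only available bound is on $f''$.
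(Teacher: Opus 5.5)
Your architecture matches the paper's: regularize in space, apply a known It\^o formula, pass to the limit term by term, and handle the local case by swapping $f$ for its global extension. You use the resolvent $J_\varepsilon$ where the paper uses the Neumann spectral projections $P_N$, and your limit passage is essentially the paper's Steps~4--7.

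The step that fails is the one producing the regularized identity. You assert that $F(w)=\int_If(w)\,\dd x$ is $C^2$ in the Fr\'echet sense on $H=L^2(I)$, and then invoke Da Prato--Zabczyk, Theorem~4.32. That theorem needs $F_{xx}$ to be a Fr\'echet second derivative in $L(H)$, uniformly continuous on bounded sets. For any $f$ that is not a quadratic polynomial this is false. Indeed $DF(w)=f'(w)$, and the superposition map $w\mapsto f'(w)$ is Fr\'echet differentiable from $L^2$ to $L^2$ only when $f'$ is affine. To see it, take $w\equiv a$ and $h=c\one_A$ with $c$ fixed and $\abs{A}\to0$. Then $\norm{h}_H\to0$, but
\[
 \frac{\norm{f'(a+h)-f'(a)-f''(a)h}_H}{\norm{h}_H}=\frac{\abs{f'(a+c)-f'(a)-f''(a)c}}{\abs{c}},
\]
which does not tend to zero. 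Likewise, the second-order Taylor remainder of $F$ is not $o(\norm{h}_H^2)$.

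Bounded, uniformly continuous $f''$ gives first-order Fr\'echet differentiability on $H$, but only a G\^ateaux second derivative. The functions the lemma is actually used for, $(r+\eta)^p$ and $h_\eta$, are exactly of this non-quadratic type. By comparison, the drift convergence you flag as the main obstacle is routine: split $f''(X^\varepsilon)\partial_xX^\varepsilon-f''(X)\partial_xX$ as the paper does in its Step~4.

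The gap is easy to close within your route.
\begin{itemize}
\item Since $J_\varepsilon$ maps $V^*$ into $V$ and $H$ into $D(A)$, $X^\varepsilon$ is a $V$-valued It\^o process. Its drift is $J_\varepsilon Y\in L^2(\Omega\times(0,T);V)$ and its noise is $J_\varepsilon Z\in L_2(U,V)$.
\item In one space dimension $V\hookrightarrow C(\overline I)$, and on $V$ the functional $F$ is genuinely $C^2$. Its Fr\'echet remainders are bounded by $\omega_{f''}(\norm{h}_{C(\overline I)})\norm{h}_H$, where $\omega_{f''}$ is a modulus of continuity of $f''$. Also $\abs{D^2F(w)[h,k]-D^2F(w')[h,k]}\le\omega_{f''}(\norm{w-w'}_{C(\overline I)})\norm{h}_H\norm{k}_H$.
\item Hence Theorem~4.32 applies in $V$. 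Its trace $\sum_kD^2F(X^\varepsilon)[J_\varepsilon Ze_k,J_\varepsilon Ze_k]=\sum_k\int_If''(X^\varepsilon)\abs{J_\varepsilon Ze_k}^2\,\dd x$ does not depend on the Riesz identification in $V$.
\end{itemize}

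The paper avoids the issue more elementarily. On the finite-dimensional space $H_N$, $F$ is an ordinary $C^2$ function of $N+1$ coefficients, so the classical It\^o formula suffices. Moreover, $P_N$ has the same contraction and strong-convergence properties on $H$, $V$ and $V^*$ that you use for $J_\varepsilon$.

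One small point: your Hilbert--Schmidt argument for basis independence is stated only for nonnegative weights. For signed $f''$, apply it separately to $(f'')^+$ and $(f'')^-$.
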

\begin{proof}
We divide the argument into seven steps.

\medskip
\noindent\text{Step 1: reduction to globally bounded derivatives.}
If $f$ already has the required global bounds, set $\widehat f=f$.

Otherwise, use the deterministic compact interval $J$ containing the
spatial range of the selected $C(\overline I)$-valued version of $X$
on $[0,\tau]$, and choose $\widehat f\in C^2(\R)$ that agrees with $f$
on $J$, has bounded first and second derivatives, and has uniformly
continuous second derivative.  Replacing $f$ by $\widehat f$ does not
change any term in \eqref{eq:variational-ito-scalar} before $\tau$.

We may therefore assume throughout the proof that
\begin{equation}\label{eq:bounded-f}
 \norm{f'}_{L^\infty(\R)}+\norm{f''}_{L^\infty(\R)}<\infty,
 \qquad f''\text{ is uniformly continuous.}
\end{equation}
In particular, $f$ has at most linear growth.

For $w,h,k\in H$, the first and second directional variations are
\begin{align}
 DF(w)h&=\int_I f'(w)h\,\dd x,\label{eq:DF}\\
 D^2F(w)[h,k]&=\int_I f''(w)hk\,\dd x.\label{eq:D2F}
\end{align}
The first variation is a bounded linear functional, and the second variation is a bounded bilinear form because
\[
 \abs{D^2F(w)[h,k]}
 \le \norm{f''}_\infty\norm{h}_H\norm{k}_H.
\]

If $w\in V$, then $f'(w)\in V$ and the Sobolev chain rule gives
\begin{equation}\label{eq:sobolev-chain}
 \partial_x f'(w)=f''(w)\partial_xw
 \quad\text{almost everywhere on }I.
\end{equation}
Consequently,
\begin{equation}\label{eq:Nemytskii-V-bound}
 \norm{f'(w)}_V^2
 \le C_f\bigl(1+\norm{w}_V^2\bigr).
\end{equation}
This verifies that the variational pairing in \eqref{eq:variational-ito-scalar} is meaningful.

\medskip
\noindent\text{Step 2: Neumann spectral projections.}
Let $(\varphi_j)_{j\ge0}$ be the real Neumann eigenbasis of $H$,
\[
 -\Delta_N\varphi_j=\mu_j\varphi_j,
 \qquad 0=\mu_0\le\mu_1\le\cdots,
\]
and let $P_N$ be the orthogonal projection onto
$H_N=\operatorname{span}\{\varphi_0,\ldots,\varphi_N\}$.  The operators $P_N$ are contractions on both $H$ and $V$, and
\begin{equation}\label{eq:PN-convergence}
 P_Nv\longrightarrow v\quad\text{strongly in }V
 \quad\text{for every }v\in V.
\end{equation}
For $y\in V^*$, define $P_Ny\in H_N$ by
\[
 \ip{P_Ny}{\varphi_j}_H=\ip{y}{\varphi_j}_{V^*,V},
 \qquad 0\le j\le N.
\]
Set
\[
 X^N=P_NX,
 \qquad Y^N=P_NY,
 \qquad Z^N=P_NZ.
\]
Taking the dual pairing of \eqref{eq:variational-process} with each $\varphi_j$ shows that $X^N$ is the $H_N$-valued continuous semimartingale
\begin{equation}\label{eq:projected-SDE}
 X^N(t)=P_NX_0+\int_0^tY^N(s)\,\dd s
                   +\int_0^tZ^N(s)\,\dd W(s).
\end{equation}

\medskip
\noindent\text{Step 3: finite-dimensional It\^o formula.}
Restrict $F$ to $H_N$.  Since $H_N$ is finite dimensional, the classical It\^o formula applied to \eqref{eq:projected-SDE}, stopped at $\tau$, gives
\begin{align}
 F(X^N(t\wedge\tau))
 &=F(P_NX_0)
   +\int_0^{t\wedge\tau}
      \ip{Y(s)}{P_Nf'(X^N(s))}_{V^*,V}\,\dd s\notag\\
 &\quad+\frac12\int_0^{t\wedge\tau}
      \sum_{k=1}^{\infty}\int_I
      f''(X^N(s,x))
      \abs{P_NZ(s)e_k(x)}^2\,\dd x\,\dd s\notag\\
 &\quad+M^N(t\wedge\tau),
 \label{eq:finite-ito}
\end{align}
where
\begin{equation}\label{eq:MN}
 M^N(t)=\int_0^t
 \ip{Z(s)^*P_Nf'(X^N(s))}{\dd W(s)}_U.
\end{equation}
The appearance of $P_Nf'(X^N)$ in the drift and martingale terms follows from \eqref{eq:DF}.  The trace term follows from \eqref{eq:D2F}; explicitly,
\[
 \operatorname{Tr}_U
 D^2F(X^N)[P_NZ,P_NZ]
 =\sum_{k=1}^{\infty}\int_I
 f''(X^N)\abs{P_NZe_k}^2\,\dd x.
\]
Absolute convergence follows from
\begin{equation}\label{eq:trace-bound-N}
 \sum_{k=1}^{\infty}
 \int_I\abs{f''(X^N)}\abs{P_NZe_k}^2\,\dd x
 \le\norm{f''}_\infty\norm{P_NZ}_{L_2(U,H)}^2.
\end{equation}
Equivalently, this series is the trace of the trace-class operator
$Z^*P_NM_{f''(X^N)}P_NZ$ on $U$, where $M_{f''(X^N)}$ denotes multiplication by $f''(X^N)$.  Hence its value is independent of the ONB $(e_k)$.

\medskip
\noindent\text{Step 4: convergence of the Nemytskii gradients in $V$.}
The spectral projections satisfy
\begin{equation}\label{eq:XN-strong-V}
 X^N\longrightarrow X
 \quad\text{in }L^2(\Omega\times(0,T);V).
\end{equation}
Indeed, this follows from \eqref{eq:PN-convergence}, the contraction property, and dominated convergence.  We claim that
\begin{equation}\label{eq:fprime-convergence}
 P_Nf'(X^N)\longrightarrow f'(X)
 \quad\text{in }L^2(\Omega\times(0,T);V).
\end{equation}
For the $H$ part, boundedness of $f''$ yields the Lipschitz estimate
\[
 \norm{f'(X^N)-f'(X)}_H
 \le\norm{f''}_\infty\norm{X^N-X}_H.
\]
For the derivatives, use \eqref{eq:sobolev-chain}:
\begin{align*}
 &\partial_xf'(X^N)-\partial_xf'(X)\\
 &\qquad=f''(X^N)(\partial_xX^N-\partial_xX)
       +\bigl(f''(X^N)-f''(X)\bigr)\partial_xX.
\end{align*}
The first term tends to zero in $L^2(\Omega\times(0,T)\times I)$ by \eqref{eq:XN-strong-V}.  After passing to an almost-everywhere convergent subsequence, the second term tends to zero by dominated convergence, because $f''$ is bounded and continuous and $\partial_xX\in L^2$.  Since every subsequence has a further subsequence with this limit, convergence holds for the full sequence.  Finally,
\begin{align*}
 \norm{P_Nf'(X^N)-f'(X)}_V
 &\le\norm{P_N(f'(X^N)-f'(X))}_V
      +\norm{P_Nf'(X)-f'(X)}_V,
\end{align*}
which proves \eqref{eq:fprime-convergence}.

Cauchy--Schwarz in $L^2(\Omega\times(0,T))$ gives the stronger,
uniform-in-time convergence
\begin{align}
 &\E\sup_{t\le T}\left|
 \int_0^{t\wedge\tau}
 \ip{Y(s)}{P_Nf'(X^N(s))-f'(X(s))}_{V^*,V}\,\dd s
 \right|\notag\\
 &\qquad\le
 \left(\E\int_0^T\|Y(s)\|_{V^*}^2\,\dd s\right)^{1/2}
 \left(\E\int_0^T
 \|P_Nf'(X^N(s))-f'(X(s))\|_V^2\,\dd s\right)^{1/2}
 \longrightarrow0.
 \label{eq:first-order-limit}
\end{align}

\medskip
\noindent\text{Step 5: convergence of the Hilbert--Schmidt trace.}
Let $(e_k)$ be an ONB of $U$ and write
\[
 \Gamma_N(s,x)=\sum_{k=1}^{\infty}\abs{P_NZ(s)e_k(x)}^2,
 \qquad
 \Gamma(s,x)=\sum_{k=1}^{\infty}\abs{Z(s)e_k(x)}^2.
\]
Both functions belong to $L^1(I)$ for almost every $(s,\omega)$, and
\[
 \int_I\Gamma_N\,\dd x=\norm{P_NZ}_{L_2(U,H)}^2,
 \qquad
 \int_I\Gamma\,\dd x=\norm{Z}_{L_2(U,H)}^2.
\]
Furthermore,
\begin{align}
 \int_I\abs{\Gamma_N-\Gamma}\,\dd x
 &\le
 \norm{P_NZ-Z}_{L_2(U,H)}
 \bigl(\norm{P_NZ}_{L_2(U,H)}+\norm{Z}_{L_2(U,H)}\bigr).
 \label{eq:Gamma-convergence}
\end{align}
Indeed, apply $|a^2-b^2|\le|a-b|(|a|+|b|)$ to every column and then Cauchy--Schwarz in the mode index and in $x$.  Since $P_NZ\to Z$ in $L^2(\Omega\times(0,T);L_2(U,H))$, the right-hand side of \eqref{eq:Gamma-convergence} tends to zero in $L^1(\Omega\times(0,T))$.

We split the trace difference as
\begin{align*}
 &\int_I f''(X^N)\Gamma_N\,\dd x
   -\int_If''(X)\Gamma\,\dd x\\
 &\quad=\int_I f''(X^N)(\Gamma_N-\Gamma)\,\dd x
       +\int_I\bigl(f''(X^N)-f''(X)\bigr)\Gamma\,\dd x.
\end{align*}
The first term tends to zero in $L^1(\Omega\times(0,T))$ by
\eqref{eq:Gamma-convergence}.

For the second term, regard
$\Gamma\,\dd\Prob\,\dd s\,\dd x$ as a finite measure.  The convergence
$X^N\to X$ in $L^2(\Omega\times(0,T)\times I)$ implies convergence in
measure also for this weighted finite measure.  Uniform continuity and
boundedness of $f''$ therefore give
\[
 \E\int_0^T\int_I
 |f''(X^N)-f''(X)|\Gamma\,\dd x\,\dd s\longrightarrow0.
\]
Consequently, if
\[
 \mathcal T_N(s)=\sum_{k=1}^{\infty}\int_I
 f''(X^N(s))|P_NZ(s)e_k|^2\,\dd x,
 \qquad
 \mathcal T(s)=\sum_{k=1}^{\infty}\int_I
 f''(X(s))|Z(s)e_k|^2\,\dd x,
\]
then
\begin{align}
 \E\sup_{t\le T}\left|
 \int_0^{t\wedge\tau}\bigl(\mathcal T_N(s)-\mathcal T(s)\bigr)\,\dd s
 \right|
 &\le \E\int_0^T|\mathcal T_N(s)-\mathcal T(s)|\,\dd s
 \longrightarrow0.
 \label{eq:trace-limit}
\end{align}

\medskip
\noindent\text{Step 6: convergence of the stochastic integrals.}
Set
\[
 \Psi_N=Z^*P_Nf'(X^N),
 \qquad
 \Psi=Z^*f'(X).
\]
Then, pointwise,
\begin{equation}\label{eq:psi-bound}
 \norm{\Psi_N-\Psi}_U^2
 \le\norm{Z}_{L_2(U,H)}^2
      \norm{P_Nf'(X^N)-f'(X)}_H^2.
\end{equation}

By \eqref{eq:fprime-convergence}, the second factor tends to zero in
measure and is bounded by a deterministic constant depending only on
$f$ and $|I|$.  Apply bounded convergence with respect to the finite
measure
$\|Z\|_{L_2(U,H)}^2\,\dd\Prob\,\dd s$ to obtain

\begin{equation}\label{eq:psi-convergence}
 \E\int_0^{T\wedge\tau}\norm{\Psi_N-\Psi}_U^2\,\dd s
 \longrightarrow0.
\end{equation}
The Burkholder--Davis--Gundy inequality therefore yields
\begin{equation}\label{eq:martingale-convergence}
 \E\sup_{t\le T}
 \abs{M^N(t\wedge\tau)-M(t\wedge\tau)}^2
 \longrightarrow0,
\end{equation}
where
\[
 M(t)=\int_0^t\ip{Z(s)^*f'(X(s))}{\dd W(s)}_U.
\]
In particular, the stochastic integral in the limiting formula is well defined.  Its quadratic variation satisfies
\[
 \E\langle M\rangle_{T\wedge\tau}
 =\E\int_0^{T\wedge\tau}\norm{Z^*f'(X)}_U^2\,\dd s
 \le |I|\norm{f'}_\infty^2
     \E\int_0^T\norm{Z}_{L_2(U,H)}^2\,\dd s<\infty,
\]
so $M(\cdot\wedge\tau)$ is a true square-integrable martingale.

\medskip
\noindent\text{Step 7: convergence of the endpoint terms.}

For almost every $\omega$, the image
\[
 \mathcal K_\omega=\{X(t,\omega):0\le t\le T\}
\]
of the continuous $H$-valued path is compact in $H$.  The projections
$P_N$ are contractions and converge strongly to the identity; hence
their convergence is uniform on compact subsets of $H$.  Consequently,
\begin{equation}\label{eq:PN-uniform-path}
 \sup_{t\le T}\|P_NX(t)-X(t)\|_H\longrightarrow0
 \quad\text{almost surely}.
\end{equation}

Since $f'$ is bounded,
\begin{equation}\label{eq:F-Lipschitz}
 \abs{F(v)-F(w)}
 \le |I|^{1/2}\norm{f'}_\infty\norm{v-w}_H.
\end{equation}

It follows from \eqref{eq:PN-uniform-path} that
\[
 \sup_{t\le T}
 |F(X^N(t\wedge\tau))-F(X(t\wedge\tau))|\longrightarrow0
 \quad\text{almost surely},
 \qquad
 F(P_NX_0)\longrightarrow F(X_0).
\]
The convergences \eqref{eq:first-order-limit},
\eqref{eq:trace-limit}, and \eqref{eq:martingale-convergence} take
place in the supremum norm in time (in $L^1$, $L^1$, and $L^2$,
respectively).  Choose one common subsequence along which all three
convergences hold uniformly in $t$ almost surely.  For each $N$, all
terms in \eqref{eq:finite-ito} have continuous paths, so the
finite-dimensional identity holds for every $t\in[0,T]$ on one event;
intersect these countably many events.  Passing to the limit along the
common subsequence on that event proves
\eqref{eq:variational-ito-scalar} simultaneously for every
$t\in[0,T]$.
\end{proof}

The proof of the following two-component version is similar, so we omit it.

\begin{lemma}
Let $X^a$, $Y^a$, and $Z^a$, $a=1,2$, satisfy
\begin{equation}\label{eq:two-component-process}
 X^a(t)=X_0^a+\int_0^tY^a(s)\,\dd s
 +\int_0^tZ^a(s)\,\dd W(s)\quad\text{in }V^*,
\end{equation}
with each triple satisfying \eqref{eq:X-class}--\eqref{eq:YZ-class}; the same cylindrical Wiener process $W$ drives both components.

Assume either that $f\in C^2(\R^2)$ has bounded gradient, bounded uniformly
continuous Hessian, and at most linear growth, or that the following
local-extension hypothesis holds: $(X^1,X^2)$ admits, on $[0,\tau]$, a
jointly measurable $C(\overline I)^2$-valued version whose pointwise
spatial range is almost surely contained in a fixed deterministic compact
rectangle $J_1\times J_2$, and $f$ agrees on a neighborhood of this
rectangle with a global $C^2(\R^2)$ function having those bounds. 

Let
\[
 F(w,z)=\int_If(w(x),z(x))\,\dd x.
\]
Then, almost surely, the following identity holds simultaneously
for every $t\in[0,T]$:
\begin{align}\label{eq:variational-ito-product}
 &F(X^1(t\wedge\tau),X^2(t\wedge\tau))
 =F(X_0^1,X_0^2)\notag\\
 &\quad+\int_0^{t\wedge\tau}\Bigl[
 {}_{V^*}\!\ip{Y^1}{f_1(X^1,X^2)}_V
 +{}_{V^*}\!\ip{Y^2}{f_2(X^1,X^2)}_V\Bigr]\,\dd s\notag\\
 &\quad+\frac12\int_0^{t\wedge\tau}\sum_{k=1}^{\infty}\int_I
 \Bigl[f_{11}|Z^1e_k|^2+2f_{12}(Z^1e_k)(Z^2e_k)
 +f_{22}|Z^2e_k|^2\Bigr] \,\dd x\dd s\notag\\
 &\quad+\int_0^{t\wedge\tau}
 \ip{(Z^1)^*f_1(X^1,X^2)+(Z^2)^*f_2(X^1,X^2)}{\dd W(s)}_U.
\end{align}
\end{lemma}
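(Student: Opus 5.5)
The plan is to repeat the seven-step proof of \Cref{lem:variational-ito}, keeping track of the two components and the mixed trace term that the common noise produces.

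\emph{Step 1 (reduction).} Under the local-extension hypothesis, replace $f$ by a global $\widehat f\in C^2(\R^2)$ that agrees with $f$ on a neighborhood of $J_1\times J_2$. We then assume throughout that $\nabla f$ is bounded, $D^2f$ is bounded and uniformly continuous, and $f$ grows at most linearly. For $w,z\in V$, the two-variable Sobolev chain rule gives $\partial_xf_a(w,z)=f_{a1}(w,z)\partial_xw+f_{a2}(w,z)\partial_xz$. Hence $f_a(w,z)\in V$ with $\|f_a(w,z)\|_V^2\le C(1+\|w\|_V^2+\|z\|_V^2)$, so the pairings in \eqref{eq:variational-ito-product} make sense. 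The first variation of $F$ is $h\oplus k\mapsto\int_I(f_1h+f_2k)\,\dd x$. The second variation is the bounded bilinear form built from $f_{11},f_{12},f_{22}$.

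\emph{Step 2 (projection and finite-dimensional It\^o).} Set $X^{a,N}=P_NX^a$, $Y^{a,N}=P_NY^a$, $Z^{a,N}=P_NZ^a$ with the Neumann spectral projections $P_N$. Because the same $W$ drives both components, $(X^{1,N},X^{2,N})$ is an $H_N\times H_N$-valued continuous semimartingale driven by $W$. The classical It\^o formula on this finite-dimensional space, stopped at $\tau$, produces \eqref{eq:variational-ito-product} with every quantity projected. The covariation of the two components along the basis direction $e_k$ is $(P_NZ^1e_k)(P_NZ^2e_k)$, and this yields the mixed term $2f_{12}(P_NZ^1e_k)(P_NZ^2e_k)$. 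The whole trace is bounded by $\|D^2f\|_\infty(\|P_NZ^1\|_{L_2}^2+\|P_NZ^2\|_{L_2}^2)$, using $2|ab|\le a^2+b^2$ on the mixed part. It is the trace of a trace-class operator on $U$, so it does not depend on the basis.

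\emph{Step 3 (limit $N\to\infty$).} First, the drift terms. As in Step~4 of \Cref{lem:variational-ito}, we show $P_Nf_a(X^{1,N},X^{2,N})\to f_a(X^1,X^2)$ in $L^2(\Omega\times(0,T);V)$. The derivative difference splits into a term $f_{ab}(X^N)(\partial_xX^{b,N}-\partial_xX^b)$ and a term $(f_{ab}(X^N)-f_{ab}(X))\partial_xX^b$, handled by strong convergence and dominated convergence along subsequences respectively. Cauchy--Schwarz then gives the uniform-in-time drift convergence.

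Next, the trace. Define $\Gamma^{ab}_N=\sum_k(P_NZ^ae_k)(P_NZ^be_k)$. The polarized form of \eqref{eq:Gamma-convergence} gives $\int_I|\Gamma^{ab}_N-\Gamma^{ab}|\,\dd x\le\|P_NZ^a-Z^a\|_{L_2}\|P_NZ^b\|_{L_2}+\|Z^a\|_{L_2}\|P_NZ^b-Z^b\|_{L_2}$. The factor $f_{ab}(X^N)-f_{ab}(X)$ is then treated by convergence in measure with respect to the finite measure $(|\Gamma^{11}|+|\Gamma^{22}|)\,\dd\Prob\,\dd s\,\dd x$, which dominates $|\Gamma^{12}|$.

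For the martingale term, apply the bound \eqref{eq:psi-bound} to each component, then bounded convergence and BDG. Finally, the endpoint terms converge uniformly in time because $F$ is Lipschitz on $H^2$ and $P_N$ converges uniformly on the compact path images. Extracting a common subsequence and intersecting countably many events gives the identity simultaneously for all $t$.

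\emph{Main obstacle.} The main obstacle is the mixed trace term, since the weight $\Gamma^{12}$ is signed. The fix is to dominate it by the diagonal weights as above, so that the one-component convergence-in-measure argument carries over unchanged.
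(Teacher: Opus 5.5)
Your proposal is correct and follows the paper's intended route: the paper omits this proof as ``similar'' to the seven-step argument for \Cref{lem:variational-ito}, and you carry out exactly that adaptation (projection, finite-dimensional It\^o with the common-noise cross-variation, $V$-convergence of the Nemytskii gradients, the polarized trace estimate, BDG, and endpoint convergence). The domination $|\Gamma^{12}|\le\frac12(\Gamma^{11}+\Gamma^{22})$ is the one extra observation the two-component case needs to run the convergence-in-measure step for the signed mixed trace, and you handle it correctly.
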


\section*{Acknowledgements}
This work was supported by the National Key R\&D Program of China (2022YFA1006102),
and the National Natural Science Foundation of China (12471418, 12595294, 12231002), and the New Cornerstone
Science Foundation (NCI202501).

\end{document}